\theoremstyle{plain}
\newtheorem{theorem}{Theorem}[section]
\newtheorem{lemma}[theorem]{Lemma}
\newtheorem{proposition}[theorem]{Proposition}
\newtheorem{corollary}[theorem]{Corollary}
\newtheorem{remark}[theorem]{Remark}
\newtheorem{definition}[theorem]{Definition}
\theoremstyle{definition}
\theoremstyle{remark}
\numberwithin{equation}{section}
\newcommand{\average}{{\mathchoice {\kern1ex\vcenter{\hrule
height.4pt width 8pt depth0pt}
\kern-11pt} {\kern1ex\vcenter{\hrule height.4pt width 4.3pt
depth0pt} \kern-7pt} {} {} }}
\newcommand{\I}{\mathcal{I}}
\newcommand{\Airy}{\mathsf{A}}
\newcommand{\Bb}{\mathscr{B}}
\newcommand{\reg}{\mathscr{R}}
\newcommand{\Dive}{\mathrm{Div}\,}
\newcommand{\de}{\bm{\delta}}
\newcommand{\skw}{\mathrm{skew}}
\newcommand{\sym}{\mathrm{sym}}
\newcommand{\ED}{\mathscr{ED}}
\newcommand{\WD}{\mathscr{WD}}
\newcommand{\vk}{\varkappa}
\newcommand{\Dcal}{\mathcal{D}}
\newcommand{\ep}{\varepsilon}
\newcommand{\ffi}{\varphi}
\newcommand{\C}{\mathbb{C}}
\newcommand{\R}{\mathbb{R}}
\newcommand{\N}{\mathbb{N}}
\newcommand{\Z}{\mathbb{Z}}
\newcommand{\E}{\mathcal{E}}
\newcommand{\F}{\mathcal{F}}
\newcommand{\di}{\mathrm{dist}}
\newcommand{\ud}{\mathrm{d}}
\newcommand{\supp}{\mathrm{spt}\,}
\newcommand{\Div}{\mathrm{Div}}
\newcommand{\Huno}{\mathcal H^1}
\newcommand{\weakly}{\rightharpoonup}           
\newcommand{\weakstar}{\stackrel{*}{\weakly}}   
\newcommand{\loc}{\mathrm{loc}}
\newcommand{\dist}{\mathrm{dist}}
\newcommand{\G}{\mathcal{G}}
\newcommand{\ce}{\ep}
\def\red#1{\textcolor{red}{#1}}
\newcommand{\blu}[1]{\textcolor[rgb]{0,0,0}{#1}}
\def\XXint#1#2#3{{\setbox0=\hbox{$#1{#2#3}{\int}$}
     \vcenter{\hbox{$#2#3$}}\kern-.5\wd0}}
\newcommand{\res}{\mathop{\hbox{\vrule height 7pt width .5pt depth 0pt
\vrule height .5pt width 6pt depth 0pt}}\nolimits}
\newcommand{\newatop}{\genfrac{}{}{0pt}{1}}
\def\@splitop#1#2\@nil{$\mathscr{#1}\!\!$\calligra#2\,\,}
\newcommand*\DeclareCursiveOperator[2]{%

 \newcommand#1{\mathop{\mbox{\@splitop#2\@nil}}\nolimits}}
\DeclareCursiveOperator{\Anew}{A}
\DeclareCursiveOperator{\Bnew}{B}
\DeclareCursiveOperator{\Cnew}{C}
\DeclareCursiveOperator{\Dnew}{D}
\DeclareCursiveOperator{\Enew}{E}
\DeclareCursiveOperator{\Qnew}{Q}
\DeclareCursiveOperator{\Tnew}{T}
\DeclareMathOperator{\ccurl}{CURL}
\DeclareMathOperator{\curl}{curl}
\DeclareMathOperator{\Curl}{Curl}
\DeclareMathOperator{\CURL}{\textsc{curl}}
\DeclareMathOperator{\INC}{INC}
 \title[Semi-discrete modeling of systems of disclinations and dislocations]{Semi-discrete modeling of systems of wedge disclinations and edge dislocations via the Airy stress function method}
\author[P. Cesana]
{Pierluigi Cesana}
\address[Pierluigi Cesana]{Institute of Mathematics for Industry, Kyushu University, 744 Motooka, Fukuoka 819-0395, Japan}
\email[P. Cesana]{cesana@math.kyushu-u.ac.jp}
\author[L. De Luca]
{Lucia De Luca}
\address[Lucia De Luca]{Istituto per le Applicazioni del Calcolo ``M. Picone'', IAC-CNR, via dei Taurini, 19, 00185 Rome, Italy}
\email[L. De Luca]{lucia.deluca@cnr.it}
\author[M. Morandotti]{Marco Morandotti}
\address[Marco Morandotti]{Dipartimento di Scienze Matematiche ``G.~L.~Lagrange'', Politecnico di Torino, Corso Duca degli Abruzzi, 24, 10129 Torino, Italy}
\email[M. Morandotti]{marco.morandotti@polito.it}
\begin{document}
\allowdisplaybreaks
\begin{abstract}
We present a variational theory for lattice defects of rotational and translational type.
We focus on finite systems of planar wedge disclinations, disclination dipoles, and edge dislocations, which we model as the solutions to minimum problems for isotropic elastic energies under the constraint of kinematic incompatibility.
Operating under the assumption of planar linearized kinematics, we formulate the mechanical equilibrium problem 
in terms of the Airy stress function, for which we introduce a rigorous analytical formulation   in the context of incompatible elasticity.
Our main result entails the  analysis of the energetic equivalence of systems of disclination dipoles and edge dislocations 
in the asymptotics of their singular limit regimes.
By adopting the regularization approach via core radius, we  
show that, as the core radius vanishes, the asymptotic energy expansion for disclination dipoles coincides with the   energy of finite systems of edge dislocations.
This proves that Eshelby's kinematic characterization of an edge dislocation in terms of a disclination dipole is exact also from the energetic standpoint.

\vskip5pt
\noindent
\textsc{Keywords}: Wedge Disclinations, Edge Dislocations, Linearized Elasticity, Airy Stress Function.
\vskip5pt
\noindent
\textsc{AMS subject classifications:}  
49J45   
49J10   
74B15   
\end{abstract}
\maketitle
\tableofcontents
\section*{Introduction}
The  modeling of  translational and rotational defects in solids, typically referred to as \emph{dislocations} and \emph{disclinations}, respectively, dates back to the pioneering work of Vito Volterra  on the investigation of the equilibrium configurations of multiply connected bodies \cite{V07}.
Dislocations, possibly the most common lattice {defects}, are regarded as the main {microscopic} mechanism of ductility and plasticity 
of metals and elastic crystals \cite{Orowan1934,Polanyi1934,Taylor1934}. Disclinations  appear at the lattice level in metal alloys \cite{ET67,TE68}, 
graphene \cite{Banhart11,Yang18}, and 
virus shells \cite{harris77,NABARRO71}.
Despite both being line defects, their behavior is different, both geometrically and energetically. Moreover, the mathematical modeling is mostly available in the mechanical assumption of cylindrical geometry, where the curves on which the defects are concentrated are indeed line segments parallel to the cylinder axis.

Dislocations entail a violation of translational symmetry and are characterized by the so-called \emph{Burgers vector}. Here we consider only \emph{edge dislocations}, namely those whose Burgers vector is perpendicular to the dislocation line.
Disclinations arise as a violation of rotational symmetry and are characterized by the so-called \emph{Frank angle}.
Disclinations are defined 
(see
\cite{AEST68,N67}) as the ``closure failure of rotation ... for a closed circuit round the disclination centre''.
Conceptually, a planar wedge disclination can be realized in the following way, see \cite{V07}. 
In an infinite cylinder, remove a triangular wedge of material and restore continuity by glueing together the two surfaces of the cut: this results in a positive wedge disclination; conversely,  open a surface with a vertical cut originating at the axis of the infinite cylinder through the surface, insert an additional wedge of material into the cylinder through the opening, and restore continuity of the material: this results in a negative wedge disclination \cite{romanov92}. 
Because of the cylindrical geometry, we will work in the cross-section of the material, where {both disclination and dislocation} lines are identified by points in the two-dimensional sections.
In this setting,
the energy of an edge dislocation scales, far away from its center, as the logarithm of the size   of the domain,
while the   energy of a single disclination is non-singular and scales quadratically with the size of the domain \cite{L03,ROMANOV09}.
In many observations disclinations  appear   in the form of dipoles \cite{hirth20,KK91,romanov92}, 
which are pairs of wedge disclinations of opposite Frank angle placed at  a close (but finite) distance.
This configuration has the effect of  screening  the  mutual  elastic strains resulting in significantly lower energy than the one of single, isolated disclinations.
 
A continuum theory for disclinations in the framework of linearized elasticity has been developed and systematized, among a number of   authors, by de Wit in \cite{W68} and subsequently in \cite{dW1,dW2,dW3}.
A non-linear theory of disclinations and dislocations has been developed in \cite{Z97}, to which we refer the interested reader for a historical excursus 
and a list of references to classical linearized theories, as well as to other early contributions on the foundation of non-linear theories. 
For more recent modeling approaches, 
in \cite{acharya15}
disclinations are comprised as a special case of   \textit{g.disclinations}, a general concept designed 
to model phase transformations, grain boundaries, and other plastification mechanisms. 
Qualitative and quantitative comparison between the classical linearized elasticity approach and the g.disclination theory is   discussed in details in \cite{ZHANG18}.
 The contributions \cite{FRESSENGEAS11} 
and \cite{Taupin15}
propose a mesoscale theory for crystal plasticity designed for modeling the dynamic interplay of disclinations and dislocations
based on linearized kinematics and written in terms of elastic and plastic curvature tensors.
Variational analysis of a discrete model for planar disclinations is performed in \cite{CVM21}.
Finally, {we point out the papers
\cite{Acharya19} and 
\cite{yavari12,yavari13},
where a differential geometry approach to large non-linear deformations is considered}.

While the body of work on dislocations is vast both in the mathematics \cite{ArizaOrtiz05,ContiGarroniMueller11,ContiGarroniMueller22,GarroniMueller05, GarroniMueller06,GromaGyorgyiIspanovity10,OrtizRepetto99} as well as in the physics and chemistry literature \cite{FleckHutchinson93, Groma97, GurtinAnand05,HirthLothe82,HullBacon01, LimkumnerdVan-der-Giessen08} due to their relevance in metallurgy and crystal plasticity, the interest on disclinations has been much lower.
This disproportion owes to the fact that disclinations are thought to be less predominant in the formation of  plastic microstructure. 
However, a large body of experimental evidence, some of which in recent years, has shown that disclinations, both in single isolated as well as multi-dipole configuration, are in fact a very relevant plastification mechanism, so that understanding their energetics and kinematics is
crucial to understanding crystal micro-plasticity
(see  \cite{ABE02,BALANDRAUD10,BALANDRAUD07,BCH15,CH20,HMHYIOONK16,HAGIHARA16,HAGIHARA10,I19,IHM13, ILTH17,IS20,Kawamura01,KK91,LN15,MA80,Manolikas80-2,hasebe20}, 
 \cite[Section~12.3.3]{GUTKIN2011329}). 
%
With this paper we intend to   lay the foundations of a general and comprehensive variational theory suitable to treat systems of rotational and  translational defects on a lattice.
We focus on three different aspects: we propose a variational model for finite systems of planar wedge disclinations;  we study dipoles of disclinations  and we identify   relevant energy scalings dictated  by geometry and loading parameters;  finally, we prove the 
asymptotic energetic equivalence of a dipole of wedge disclinations with an edge dislocation. 

\medskip

\noindent\textbf{Main contributions and impact of this work.}
We  operate under the assumption of plane strain  elastic displacements
and under the approximation of linearized kinematics so that contributions of individual defects can be added up
via superposition.
As we are mainly concerned with the modeling of experimental configurations of metals and hard crystals, we restrict our analysis to the case of two-dimensional plane strain geometries, leaving to future work the analysis in the configuration of 
buckled membranes.
We model disclinations and dislocations as point sources of  kinematic incompatibility following an approach analogous to \cite{SN88} and \cite{CermelliLeoni06}. Alternative approaches according to the stress-couple theory in linearized kinematics are pursued in \cite{W68,FRESSENGEAS11,Taupin15}.
Despite their intrinsic limitations,  linearized theories have proven useful to describe properties of systems of dislocations both in continuous and discrete models \cite{CermelliGurtin99,CermelliLeoni06, GarroniLeoniPonsiglione10, DeLucaGarroniPonsiglione12, AlicandroDeLucaGarroniPonsiglione14, ContiGarroniOrtiz15, DeLuca16, AlicandroDeLucaGarroniPonsiglione16, AlicandroDeLucaGarroniPonsiglione17, BlassMorandotti17,BlassFonsecaLeoniMorandotti15, Ginster19_1, AlicandroDeLucaLazzaroniPalombaroPonsiglione22}
(see also \cite{ScardiaZeppieri12, MuellerScardiaZeppieri14, Ginster19_2, GarroniMarzianiScala21} for related nonlinear models for (edge) dislocations).
In \cite{PL13, CPL14, CDRZZ19} systems of disclinations have been investigated in linear and finite elasticity models, 
and qualitative as well as quantitative comparisons have been discussed.

By working in plane strain linearized kinematics, it is   convenient to formulate the mechanical equilibrium problem in terms of  a scalar potential, the Airy stress function  of the system, see, \emph{e.g.}, \cite{Meleshko03,Michell}. 
This is a classical method in two-dimensional elasticity based on the introduction of a potential scalar function whose second-order derivatives correspond to the components of the stress tensor (see \cite[Section~5.7]{ciarlet97} and \cite{Schwartz66}).
From the formal point of view, by denoting with $\sigma_{ij}$ the components of the $2\times 2$ mechanical stress tensor, we write
$$
\sigma_{11}=\frac{\partial^2 v}{\partial x_2^2}\,,\qquad
\sigma_{12}=\sigma_{21}=-\frac{\partial^2  v}{\partial x_2\partial x_1}\,,\qquad
\sigma_{22}=\frac{\partial^2 v}{\partial x_1^2}\,,
$$
where $v\colon \R^2\supset\Omega\to \R$ is the Airy stress function.  
Upon introduction of the Airy potential $v$, the equation of mechanical equilibrium $\Dive\sigma=0$ is identically satisfied while the information on kinematic (in-)compatibility  is translated into  a loading source problem for the biharmonic equation for the scalar field $v$\,. 
By indicating  with~$\epsilon$ the $2\times 2$ symmetric strain tensor (related to~$\sigma$ via the linear relation $\sigma=\C\epsilon$, with~$\C$ being the fourth-order elasticity tensor), the mechanical equilibrium problem formulated in terms of strains and stresses  (which we refer to as the \emph{laboratory variables})
and in terms of the Airy potential formally read, respectively,
 %
\begin{equation}\label{2304191245}
\begin{cases}
\curl\Curl\epsilon=-\theta&\text{in $\Omega$}\\
\Dive\sigma=0&\text{in $\Omega$}\\
\sigma\,n=0 &\textrm{on $\partial\Omega$}
\end{cases}
 \qquad\qquad\text{and}\qquad\qquad
 \begin{cases}
\displaystyle \frac{1-\nu^2}{E}\Delta^2v=-\theta&\text{in }\Omega\,\\[2mm]
\displaystyle \nabla^2v\,t=0&\textrm{on }\partial \Omega.\,
\end{cases}
\end{equation}
Here, $E$ and $\nu$ are the classical Young modulus and Poisson ratio, respectively; the unit vectors~$t$ and~$n$ are the tangential and normal {directions} to the boundary of $\Omega$, 
and~$\theta$ denotes a source term {accounting} for kinematic incompatibility.
%
 Existence of the Airy stress function   and  the  variational equivalence of the equilibrium problems formulated in terms of strains and stresses,
 with the single-equation problem for the Airy potential are proved in \cite{ciarlet97} in simply connected domains for perfectly compatible (that is, defect-free, $\theta\equiv 0$) elasticity. 
 
 \smallskip
 
Our first results, Propositions~\ref{prop:airyepsilonA_soloel} and~\ref{prop:airyepsilon_soloel} (and Corollaries~\ref{prop:airyepsilonA} and~\ref{prop:airyepsilon}), entail the investigation of finite systems of  isolated disclinations, modeled by a finite sum of Dirac deltas placed at the disclination centers and modulated by their corresponding Frank angles  (see \cite{SN88}).
Consequently, we take $\theta=\sum_{k=1}^{K}s^k\de_{y^k}$, where 
$\{y^k\}_{k=1}^K\subset\Omega$ are the  fixed (hence the term  \emph{isolated}) centers of the disclinations  
and we clarify the equivalence, in terms of suitable notions of weak solutions (see Definition~\ref{defdeb}),
of the two formulation for mechanical equilibrium appearing in \eqref{2304191245},
thus generalizing the analysis of \cite{ciarlet97}
for non-zero Frank angles~$s^k$.
%
%
In doing so, we construct a rigorous variational setting so that the equilibrium problem formulated in terms of the Airy potential is well posed in terms of existence, uniqueness, and regularity of solutions.
The Airy potentials corresponding to the singular strains and stresses are the classical solutions for planar wedge disclinations computed in \cite{V07} -- and correctly recovered by our model -- corresponding to the Green's function for the bilaplacian operator.
An immediate application of our analysis is in providing a
rigorous framework for numerical calculations
of lattice defects with the Airy potential method (see, \emph{e.g.}, \cite{SN88, ZHANG14}).  
Additionally, we show that the solutions to the mechanical equilibrium problem formulated in the Airy variable (the system on the right in~\eqref{2304191245})
can be characterized as the minimizers of the following functional for the Airy stress function
\begin{equation}\label{2304241448}
{\I^{\theta}(v;\Omega)\coloneqq\frac{1}{2}\frac{1+\nu}{E}\int_{\Omega}\left(|\nabla^2 v|^2-\nu(\Delta v)^2\right)\ud x+\langle\theta,v\rangle\,,}
\end{equation}
defined over a suitable class of Sobolev functions {(see \eqref{defI} and \eqref{minI})}.
Here, the bulk term of the functional coincides with the elastic energy measured in terms of the laboratory variable, while the linear part $v\mapsto\langle\theta,v\rangle$  represents  the work performed by the point singularities (disclinations) and does not enter the  mechanical energy balance.
In the remainder of the paper we exploit extensively the variational  characterization for the problem formulated in the Airy variable in the analysis of singular regimes. 

\smallskip

Secondly, we show the energetic equivalence between finite families of wedge disclinations dipoles and systems of edge dislocations.
{From the point of view of  the applications in Materials Science, these systems are interesting because disclination dipoles are fundamental building blocks to model
kinks as well as
grain boundaries
\cite{Gertsman89,LI72, Nazarov13}, which are important configurations in crystals  and metals. }
{To this end, we first consider}
a dipole of wedge disclinations placed at a distance $h>0$ along the~$x$-axis, that is, we set $\theta_h=s\de_{(\frac{h}{2};0)}-s\de_{(-\frac{h}{2};0)}$ in~\eqref{2304241448}.
Then, {replacing the linear term in \eqref{2304241448} with an average of the variable $v$ at a scale $\ce>h$\,, we define the regularization of the functional $\I^{\theta_h}$ 
as
\begin{equation}\label{2304241515}
\begin{aligned}
\I^{\theta_h}_{h,\ce}(v)\coloneqq\,&\,
\frac{1}{2}\frac{1+\nu}{E}\int_{\Omega_{\varepsilon}} \!\!\left(|\nabla^2 v|^2-\nu(\Delta v)^2\right)\ud x\\
&\,
+\frac{s}{2\pi(\ce-h)}\int_{\partial B_{\ce-h}(0)} \!\bigg[v\Big(x+\frac h 2e_1\Big)-v\Big(x-\frac h 2 e_1\Big)\bigg]\,\ud\Huno(x),
\end{aligned}
\end{equation}
where $\Omega_\ce\coloneqq\Omega\setminus\overline{B}_\ce(0)$\,.

Observe that, when keeping $\ce>0$ fixed and letting $h\to 0$\,, we have that $\theta_h\to 0$ and the functional $\I^{\theta_h}_{h,\ce}$ converges to 
the sole bulk energy term integrated over
$\Omega_{\varepsilon}$\,.
On the other hand, since
\begin{equation}\label{questaquilachiamo}
\I^{\theta_h/h}_{h,\ce}(v/h)=\frac1{h^2}\I^{\theta_h}_{h,\ce}(v),
\end{equation}
we obtain that linear rescalings by $h$ of both the function $v$ and the measure $\theta_h$ induce quadratic rescalings by $h^2$ of corresponding energies.
Therefore, setting 
$$\widetilde\I_{h,\ce}^{\theta_h}(w)\coloneqq \frac{1}{h^2}\I^{\theta_h}_{h,\ce}(hw),$$
we have that 
$$\widetilde\I_{h,\ce}^{\theta_h}(w)= \I^{\theta_h/h}_{h,\ce}(w),$$
that is, the left-hand side of \eqref{questaquilachiamo} with $w=v/h$.

Notice that $\theta_h/h\to -s\partial_{x_1}\de_0$ in the sense of distributions; this leads to the formalization in terms of incompatibility operators of Eshelby's derivation \cite{Eshelby66} (see also \cite{dW3}) of the edge dislocation $\alpha\coloneqq se_2\de_0$ 
as the disclination dipole $\theta_h$ with vanishing length $h$ and Frank angles $\pm s$\,. 
The relationship between the dipole of disclinations and the equivalent edge dislocation is clarified by noting that $-s\partial_{x_1}\de_0=\curl\alpha$ 
(see \eqref{incfinal} and \eqref{incfinal2} for the full details details).
%
%
Therefore, from an energetic point of view, we expect
 that the functionals $\I^{\theta_h/h}_{h,\ce}$
 converge, in a suitable sense, to the functional
 \begin{equation}\label{2304241523}
\I^{\alpha}_{0,\ce}(v) \coloneqq
\frac{1}{2}\frac{1+\nu}{E}\int_{\Omega_{\varepsilon}}\left(|\nabla^2 v|^2-\nu(\Delta v)^2\right)\ud x
+\frac{s}{2\pi\ce}\int_{\partial B_\ce(0)}\partial_{x_1}v(x)\,\ud\Huno(x)\,.
\end{equation}
This is the content of Proposition~\ref{convhtozero}, where we prove that the minima 
and minimizers 
of the functional $\I^{\theta_h}_{h,\ce}$ converge (as $h\to 0$) to those of $\I^{\alpha}_{0,\ce}$\,.

Since the loading term in \eqref{2304241523} is an $\ce$-regularization of the {\it core} energy associated with the edge dislocation $se_2\de_0$\,, the length scale~$\ce$ can be interpreted as the core radius of this edge dislocation. 
In other words, at scales larger than~$\ce$, the material responds to continuum theories of elasticity, whereas discrete descriptions are better suited at scales smaller then~$\ce$\,, thus establishing the \emph{semi-discrete} nature of our model.

In Remark~\ref{2202211829}, we show that the same convergence carries through for a system of isolated dipoles of disclinations, \emph{i.e.}, when $\theta_h$ represents a finite system of disclination dipoles (with length~$h$) approximating a finite system of edge dislocations identified by the corresponding~$\alpha$.

\smallskip

Finally, keeping $\alpha$ fixed, we discuss the asymptotic expansion of the $\ce$-regularized dislocation energy $\I_{0,\ce}^\alpha$ as $\ce\to0$. 
By relying on an additive decomposition between plastic (\emph{i.e.}, determined by the disclinations) and elastic parts of the Airy stress function, in an analogous fashion to \cite{CermelliLeoni06}, we study the limit of the minimal $\I_{0,\ce}^\alpha$ as $\ce\to0$ (Theorem~\ref{2201181928}), we compute the renormalized energy of the system (Theorem~\ref{CLequiv}), and we finally obtain the energetic equivalence, which is the sought-after counterpart of Eshelby's kinematic equivalence. 
{Our asymptotic expansion of the minimal $\I^\alpha_{0,\ce}$\,, obtained via the Airy stress function formulation (see \eqref{20220222_8})\,,  is in agreement with  \cite[Theorem~5.1 and formula (5.2)]{CermelliLeoni06} at all orders;
therefore, as in \cite{CermelliLeoni06}, the minimizers  of $\I_{0,\ce}^\alpha$\,, as $\ep\to 0$\,, converge to the sum of the Green's functions associated with each of the dislocation in~$\alpha$ plus a smooth function matching the traction-free boundary condition.
}
%
%
To conclude, in Theorem~\ref{diago}, we combine in a cascade the convergence results obtained above (sending first $h\to 0$ and then $\ce\to 0$)\,,
computing, via a diagonal argument, the asymptotic expansion of the energy $\I^{\theta_h}_{h,\ce(h)}$ for $h\ll \ce(h)$ as $h\to 0$\,.
This extends the asymptotic analysis in \cite{CermelliLeoni06}  to finite systems of dipoles of wedge disclinations.

\medskip
\noindent
\textbf{Outline of the paper and methods.}
The outline of the paper is as follows.
 Section \ref{sc:model}  is devoted to the presentation of the mechanical equilibrium equations, in terms both of the laboratory variables and of the Airy stress function of the system.
Our results are based on a crucial characterization of traction-free boundary displacements for the problem formulated in terms of the Airy potential. Such a characterization involves a non-standard tangential boundary condition for the Hessian of the Airy stress function which we are able to characterize in terms of classical Dirichlet-type boundary conditions for the biharmonic equation (Proposition \ref{20220421}).

In Section \ref{sc:isolated_disclinations}, we study the mechanical problem
for systems of isolated disclinations 
formulated in terms of the Airy potential. 
With Section \ref{sub:dipole},
we begin our investigation of systems of disclination dipoles which we then conclude in Section \ref{sc:four}. 
Length scales and mutual distances between disclinations are regarded as model parameters, of which we study the asymptotics.
%
%
We operate by directly computing the limits of energy minima and minimizers; a more general approach 
via $\Gamma$-convergence \cite{Braides02,DalMaso93}
 is not explored in this paper.
We stress that the results in Section~\ref{sc:four} are written for finite systems of disclination dipoles and dislocations. In particular, Theorem~\ref{CLequiv} fully characterizes the energy of a finite system of dislocations: the renormalized energy in~\eqref{ren_en_dislo}  contains information on the mutual interaction of the dislocations.
%

While our  focus is on defects and kinematically incompatible systems, our systematization of the Airy stress function method is useful also for the general case of compatible elasticity. We investigate a number of analytical questions, such as the equivalence of boundary data in terms of the laboratory variables and the Airy potential, fine Poincar\'{e} and trace inequalities in perforated domains, and density of Airy potentials under non-standard constraints. 
We gather these original results 
in a series of 
appendices. 

\medskip

\textsc{Acknowledgments:} The authors are members of the Gruppo Nazionale per l'Analisi Matematica, la Probabilit\`a e le loro Applicazioni (GNAMPA) of the Istituto Nazionale di Alta Matema\-tica (INdAM).
PC   holds an honorary appointment at La Trobe University and is supported by  JSPS Innovative Area Grant JP21H00102 and partially  JP19H05131.
MM gratefully acknowledges support from the \emph{Japan meets Italian Scientists} scheme of the Embassy of Italy in Tokyo and from the MIUR grant Dipartimenti di Eccellenza 2018-2022 (CUP: E11G18000350001).
MM acknowledges  the   Institute of Mathematics for Industry, an International Joint Usage and Research Center located in Kyushu University, where part of the work contained in this paper was carried out.

\medskip

\textsc{Supplementary materials or data:} There are no supplementary materials or data associated with this manuscript.

\medskip

\textsc{Conflict of interests:} The authors declare no conflict of interests.

\vskip10pt
\textsc{Notation.}
For $d\in\{2,3\}$\,, $m\in\N$\,, and for every $k\in\Z$\,, let $\reg^k(A;\R^m)$ denote the space of $k$-regular $\R^m$-valued functions defined on an open set $A\subset\R^d$ (we will consider Sobolev spaces like $H^{k}(A;\R^m)$ or spaces of $k$-differentiable functions like $C^{k}(A;\R^m)$, for $k\ge 0$)\,.
Now we introduce different curl operators and show relationships among them.
For $d=3$ and $m=3$ we define $\CURL\colon\reg^k(A;\R^{3})\to \reg^{k-1}(A;\R^3)$ as 
\begin{equation*}
\begin{aligned}
\CURL V\coloneqq&(\partial_{x_2}V^3-\partial_{x_3}V^2;\partial_{x_3}V^1-\partial_{x_1}V^3; \partial_{x_1}V^2-\partial_{x_2}V^1)
\end{aligned}
\end{equation*}
for any $V=(V^1;V^2;V^3)\in\reg^k(A;\R^{3})$\,,
or, equivalently, $(\CURL V)^i=\ep_{ijk}\partial_{x_j}V^k$\,, where $\ep_{ijk}$ is the Levi-Civita symbol.
For $d=3$ and $m=3\times 3$ we define $\ccurl\colon\reg^k(A;\R^{3\times 3})\to \reg^{k-1}(A;\R^{3\times 3})$ by $(\ccurl M)_{ij}\coloneqq\ep_{ipk}\partial_{x_p}M_{jk}$ for every $M\in \reg^k(A;\R^{3\times 3})$ and
we notice that $(\ccurl M)_{ij}=(\CURL M_j)^i$\,, where $M_j$ denotes the $j$-th row of $M$\,.
Moreover, we denote by $\INC\colon\reg^k(A;\R^{3\times 3})\to \reg^{k-2}(A;\R^{3\times 3})$ the operator defined by $\INC\coloneqq\ccurl\ccurl\equiv\ccurl\circ\ccurl$\,.

For $d=2$ and $m\in\{2,2\times 2\}$\,,
we define the following curl operators: $\curl\colon\reg^k(A;\R^2)\to \reg^{k-1}(A;\R)$ as $\curl v\coloneqq\partial_{x_1}V^2-\partial_{x_2}{V^1}$ for any $V=(V^1;V^2)\in\reg^k(A;\R^2) $, $\Curl\colon\reg^k(A;\R^{2\times 2})\to \reg^{k-1}(A;\R^2)$ as $\Curl M:=(\curl M_1;\curl M_2)$ for any $M\in \reg^k(A;\R^{2\times 2})$\,.

Let now $A\subset\R^2$ be open. For every $V=(V^1;V^2)\in\reg^{k}(A;\R^2)$\,, we can define $\underline{V}\in\reg^k(A\times\R;\R^3)$ as $\underline{V}(x_1;x_2;x_3)\coloneqq(V^1(x_1;x_2);V^2(x_1;x_2);0)$ and we have that
$$
\CURL\underline{V}=(0;0;\curl V)\,.
$$
Analogously, if $M\in\reg^k(A;\R^{2\times 2})$\,, then, defining $\underline{M}\colon A\times\R\to \R^{3\times 3}$ by $\underline{M}_{ij}(x_1;x_2;x_3)=M_{ij}(x_1;x_2)$ if $i,j\in\{1,2\}$ and  $\underline{M}_{ij}=0$ otherwise, we have that $\underline{M}\in \reg^k(A\times\R;\R^{3\times 3})$\,,
$$
\ccurl\underline{M}=\left[\begin{array}{ccc}
0&0&\curl M_1\\
0&0&\curl M_2\\
0&0&0
\end{array}\right]\,,\qquad \ccurl\ccurl\underline{M}=\left[\begin{array}{ccc}
0&0&0\\
0&0&0\\
0&0&\curl\Curl M
\end{array}\right]\,.
$$
In what follows, $\R^{2\times 2}_{\sym}$ is the set of the matrices $M\in\R^{2\times 2}$ with $M_{ij}=M_{ji}$ for every $i,j=1,2$\,.
Furthermore, for every $M\in\R^{2\times 2}$ we denote by $M^{\top}$ the matrix with entries $(M^{\top})_{ij}=M_{ji}$ for every  $i,j=1,2$\,.

Finally, in the whole paper, the symbol $C$ indicates a constant that may change from line to line. Whenever we want to stress the dependence of $C$ from other constants $c_1,\ldots, c_K$ or sets $\omega_1,\ldots,\omega_L$ we adopt the notation $C(\alpha_1,\ldots,\alpha_K,\omega_1,\ldots,\omega_L)$\,.


\section{The mechanical model}\label{sc:model}
\subsection{Plane strain elasticity}
Let $\Omega$ be an open bounded simply connected subset of $\R^2$ with~$C^2$ boundary. For any displacement $u\in H^1(\Omega;\R^2)$ the associated elastic strain $\epsilon\in L^2(\Omega;\R^{2\times 2}_{\sym})$ is given by $\epsilon\coloneqq\nabla^{\sym} u\coloneqq\frac{1}{2}(\nabla u+\nabla^{\top} u)$, whereas the corresponding stress $\sigma\in  L^2(\Omega;\R^{2\times 2}_{\sym})$ is defined by 
\begin{equation}\label{stressstrain}
\sigma\coloneqq\C\epsilon\coloneqq\lambda\mathrm{tr}(\epsilon)\mathbb{I}_{2\times 2}+2\mu\epsilon\,;
\end{equation}
here $\C$ is the {\it isotropic elasticity tensor} with {\it Lam\'e constants} $\lambda$ and $\mu$\,.
Notice that 
\begin{subequations}\label{lamepos}
\begin{equation}
\C\textrm{ is positive definite}
\end{equation}
if and only if 
\begin{equation}\label{lame}
\mu>0\qquad\textrm{ and }\qquad\lambda+\mu>0\,,
\end{equation}
or, equivalently,
\begin{equation}\label{lame3}
E>0\qquad\textrm{ and }\qquad-1<\nu<\frac{1}{2}\,.
\end{equation}
Here and below,  $E$ is  the {\it Young modulus}  and $\nu$ is  the {\it Poisson ratio}, in terms of which the Lam\'e constants $\lambda$ and $\mu$ are expressed by 
\end{subequations}
\begin{equation}\label{lame2}
\mu=\frac{E}{2(1+\nu)}\qquad\textrm{ and }\qquad \lambda=\frac{E\nu}{(1+\nu)(1-2\nu)}\,.
\end{equation}
We will assume \eqref{lamepos} throughout the paper.

In {plane} strain elasticity the isotropic elastic energy associated with the displacement $u$ in the body $\Omega$ is defined by 
\begin{equation}\label{def:energy}
\E(u;\Omega)\coloneqq\frac 1 2 \int_{\Omega}\sigma:\epsilon\,\ud x=\frac{1}{2}\int_{\Omega}\big(\lambda (\mathrm{tr}(\epsilon))^2+2\mu|\epsilon|^2\big)\,\ud x\,;
\end{equation}
we notice that in formula \eqref{def:energy} the energy $\E(\cdot;\Omega)$ depends only on $\epsilon$ so that in the following, with a little abuse of notation, we will denote by $\E(\cdot;\Omega)\colon L^2(\Omega;\R^{2\times 2}_\sym)\to [0,+\infty)$  the energy functional defined in \eqref{def:energy}, considered as a functional of $\epsilon$ (and not of $u$).

Notice that we can write the elastic energy also as a function of the stress $\sigma$ as
\begin{equation}\label{energysigma}
\F(\sigma;\Omega):=\frac{1}{2}\frac{1+\nu}{E}\int_{\Omega} \big(|\sigma|^2-\nu(\mathrm{tr}(\sigma))^2\big)\,\ud x=\E(\epsilon;\Omega)\,,
\end{equation}
where we have used \eqref{stressstrain}  and \eqref{lame2} to deduce that
\begin{equation}\label{strain_stress}
\epsilon_{11}=\frac{1+\nu}{E}\Big((1-\nu)\sigma_{11}-\nu\sigma_{22}\Big)\,,\quad \epsilon_{12}=\frac{1+\nu}{E}\sigma_{12}\,,\quad \epsilon_{22}=\frac{1+\nu}{E}\Big((1-\nu)\sigma_{22}-\nu\sigma_{11}\Big)\,,
\end{equation}
and 
\begin{equation}\label{intE}
\lambda \big(\mathrm{tr}(\epsilon)\big)^2+2\mu|\epsilon|^2=\frac{1+\nu}{E}\big(|\sigma|^2-\nu(\mathrm{tr}(\sigma))^2\big)\,.
\end{equation}
Finally, we reformulate the energy \eqref{energysigma} using the  Airy stress function method. This assumes the existence of a function $v\in H^2(\Omega)$ such that
\begin{equation}\label{airy}
\sigma_{11}=\partial^2_{x_2^2}v\,,\quad\sigma_{12}=-\partial^{2}_{x_1x_2}v\,,\quad\ \sigma_{22}=\partial^2_{x_1^2}v\,;
\end{equation}
more precisely, we consider the operator $\Airy\colon\reg^k(\Omega)\to\reg^{k-2}(\Omega;\R^{2\times 2}_\sym)$ such that $\sigma=\sigma[v]=\Airy(v)$ is defined by \eqref{airy}\,.
It is immediate to see that the operator $\Airy$ is not injective, since $\Airy(v)=\Airy(w)$ whenever $v$ and $w$ differ up to an affine function; its invertibility under suitable boundary conditions will be discussed in Subsection \ref{incairy} (see Corollaries \ref{prop:airyepsilonA} and \ref{prop:airyepsilon}).

Assuming that there exists $v$ such that $\sigma=\sigma[v]=\Airy(v)$\,, from \eqref{airy}, we can rewrite \eqref{energysigma} as
\begin{equation}\label{energyairy}
\F(\sigma[v];\Omega)=\frac 1 2\frac{1+\nu}{E}\int_{\Omega}\Big(|\nabla^2 v|^2-\nu|\Delta v|^2\Big)\,\ud x\eqqcolon \G(v;\Omega)\,.
\end{equation}
We notice that if the stress $\sigma$ admits an Airy potential $v$\,, i.e., $\sigma=\sigma[v]=\Airy(v)$\,, then 
\begin{equation}\label{divenulla}
\Dive\sigma[v]\equiv 0\,,
\end{equation}
that is, the equilibrium equation $\Dive\sigma= 0$ is automatically satisfied.
In fact, this is the main advantage in using the Airy stress function method.
Notice that the identity in \eqref{divenulla} is, at this stage, formal and in general holds in the distributional sense. As we will see in Subsection \ref{incairy}, in our case equation \eqref{divenulla} will hold in $H^{-1}(\Omega;\R^2)$\,.

\subsection{Kinematic incompatibility: dislocations and disclinations}\label{sc:inclab}
Let $u\in C^3(\Omega;\R^2)$ and set $\beta:=\nabla u$\,. Clearly, 
\begin{subequations}\label{compa_tutte}
\begin{equation}\label{compa}
\Curl\beta=0\qquad\textrm{ in }\Omega\,.
\end{equation}
We can decompose $\beta$ as
 $\beta=\epsilon+\beta^{\skw}$\,, where $\epsilon\coloneqq\frac{1}{2}(\beta+\beta^{\top})$ and $\beta^{\skw}\coloneqq\frac{1}{2}(\beta-\beta^{\top})$\,. By construction,
 \begin{equation*}
 \beta^{\skw}=\left(\begin{array}{ll} 0&f\\
-f&0
\end{array}\right)\,,
\end{equation*}
for some function $f\in  C^2(\Omega)$\,, and hence $\Curl\beta^{\skw}=\nabla f$\,.
 Therefore, the compatibility condition \eqref{compa} can be rewritten as
\begin{equation}\label{compa2}
\Curl\epsilon=-\nabla f\qquad\textrm{in }\Omega\,,
\end{equation}
which, applying again the $\curl$ operator, yields the {\it Saint-Venant compatibility condition}
\begin{equation}\label{compa3}
\curl\Curl\epsilon=0\qquad\textrm{in }\Omega\,.
\end{equation}
\end{subequations}

Viceversa, given $\epsilon\in C^2(\Omega;\R^{2\times2}_{\sym})$, the {\it Saint-Venant principle} \cite{SV1855} states that if \eqref{compa3} holds, then there exists $u\in C^3(\Omega;\R^2)$ such that $\epsilon=\nabla^{\sym}u$\,.  

In order to apply the direct method of the Calculus of Variations for the minimization of the elastic energy \eqref{def:energy}, the natural functional setting for the displacement $u$ is the Sobolev space $H^1(\Omega;\R^2)$\,. Therefore, a natural question that arises is whether identities \eqref{compa_tutte} make sense also when $\beta$ is just in $L^2(\Omega;\R^{2\times 2})$.  
The answer to this question is affirmative as shown by the following result proved in \cite{ciarlet05} (see also \cite{Geymonat09}).
\begin{proposition}\label{sv}
Let $\Omega\subset\R^2$ be an open, bounded, and simply connected set and let $\epsilon \in L^2(\Omega;\R^{2\times 2}_{\sym})$\,. Then,
\begin{equation}\label{compa4}
\curl\Curl\epsilon=0\qquad\textrm{in }H^{-2}(\Omega)
\end{equation}
if and only if there exists $u\in H^1(\Omega;\R^2)$ such that $\epsilon=\nabla^{\sym} u$\,. Moreover, $u$ is unique up to rigid motions.
\end{proposition}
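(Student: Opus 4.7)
The plan is to follow the chain of implications \eqref{compa3} $\Rightarrow$ \eqref{compa2} $\Rightarrow$ \eqref{compa}, carried out in the distributional sense, and then invoke a De Rham/Poincar\'e-type lemma at each stage. The ``if'' direction is the easy one: if $\epsilon = \nabla^{\sym} u$ with $u \in H^1(\Omega;\R^2)$, then $\epsilon \in L^2$, so $\Curl \epsilon \in H^{-1}(\Omega;\R^2)$ and $\curl\Curl\epsilon \in H^{-2}(\Omega)$, and the identity $\curl\Curl(\nabla^{\sym} u) = \partial_{11}(\partial_2 u_2) - \partial_{12}(\partial_1 u_2 + \partial_2 u_1) + \partial_{22}(\partial_1 u_1) = 0$, which is trivial for smooth $u$ by Schwarz's theorem, extends to $H^1$ by density.

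For the converse, suppose $\epsilon \in L^2(\Omega;\R^{2\times 2}_{\sym})$ satisfies $\curl\Curl\epsilon = 0$ in $H^{-2}(\Omega)$. Since $\Omega$ is bounded, simply connected and regular enough, the key tool I would use is the following $L^2$-version of the Poincar\'e lemma (a consequence of Ne\v{c}as's lemma, which identifies $L^2(\Omega)$ with distributions whose gradient lies in $H^{-1}(\Omega;\R^2)$): if $g \in H^{-1}(\Omega;\R^2)$ satisfies $\curl g = 0$ in $H^{-2}(\Omega)$, then there exists $f \in L^2(\Omega)$, unique up to additive constants, such that $g = \nabla f$. Applying this result to $g \coloneqq \Curl \epsilon \in H^{-1}(\Omega;\R^2)$, whose distributional curl vanishes by hypothesis, yields $f \in L^2(\Omega)$ such that $\Curl \epsilon = -\nabla f$, which is precisely \eqref{compa2} understood distributionally.

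Next, I would set $\beta^{\skw} \coloneqq \begin{pmatrix} 0 & f \\ -f & 0 \end{pmatrix}$ and $\beta \coloneqq \epsilon + \beta^{\skw} \in L^2(\Omega;\R^{2\times 2})$. A direct computation gives $\Curl \beta^{\skw} = \nabla f$, and therefore $\Curl \beta = \Curl \epsilon + \nabla f = 0$ in $H^{-1}(\Omega;\R^2)$. Applying the same $L^2$-Poincar\'e lemma row-by-row to $\beta$, one obtains $u = (u_1, u_2) \in H^1(\Omega;\R^2)$ with $\nabla u = \beta$, so that $\nabla^{\sym} u = \tfrac{1}{2}(\beta + \beta^{\top}) = \epsilon$. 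Uniqueness is then immediate: if $u, u' \in H^1(\Omega;\R^2)$ both realize $\epsilon$, then $w = u - u'$ has $\nabla^{\sym} w = 0$, and the kernel of the symmetric gradient on a connected open set is exactly the space of rigid motions $x \mapsto a + Wx$ with $a \in \R^2$ and $W$ skew (this is Korn's inequality in its trivial form, or a direct consequence of the above construction applied to the zero strain).

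The main obstacle is really the $H^{-1}$-to-$L^2$ lifting behind the Poincar\'e lemma for distributional curl-free fields; everything else is bookkeeping. This is exactly the point where the topological hypothesis (simple connectedness) and the mild regularity of $\partial \Omega$ enter the picture, and it is precisely the content of the results in \cite{ciarlet05} and \cite{Geymonat09} that we are entitled to invoke.
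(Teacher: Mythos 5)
Your argument is correct and is essentially the same as the proof behind the paper's statement: the paper does not prove Proposition~\ref{sv} itself but attributes it to \cite{ciarlet05} (see also \cite{Geymonat09}), and your two-step scheme---first lifting the curl-free field $\Curl\epsilon\in H^{-1}(\Omega;\R^2)$ to some $f\in L^2(\Omega)$ via the Ne\v{c}as/Lions lemma, then lifting the row-wise curl-free field $\beta=\epsilon+\beta^{\skw}\in L^2(\Omega;\R^{2\times 2})$ to $u\in H^1(\Omega;\R^2)$, with the kernel of $\nabla^{\sym}$ giving uniqueness up to rigid motions---is precisely the argument of that reference. The only caveat, which you already flag, is that the $H^{-1}$-to-$L^2$ lifting tacitly requires mild regularity of $\partial\Omega$ (e.g.\ Lipschitz) in addition to simple connectedness, exactly as in the cited works.
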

Notice that, by the Closed Graph Theorem, we have that \eqref{compa4} holds true in $H^{-2}(\Omega)$ if and only if it holds in the sense of distributions. Therefore, the generalizations of identities \eqref{compa_tutte} when $u\in H^1(\Omega;\R^2)$ are given by
 \begin{subequations}\label{compadebole}
 \begin{eqnarray}\label{compa10}
 \Curl\beta& \!\!\!\! =&\!\!\!\! 0\qquad\qquad\!\!\textrm{ in }\Dcal'(\Omega;\R^2)\,,\\ \label{compa20}
 \Curl\epsilon& \!\!\!\!=&\!\!\!\! -\nabla f\qquad\textrm{ in }\Dcal'(\Omega;\R^2)\,, \\
 \label{compa30}
\curl\Curl\epsilon& \!\!\!\!=&\!\!\!\! 0\qquad\qquad\textrm{in }\Dcal'(\Omega)\,, 
 \end{eqnarray}
 \end{subequations}
 where  $f$ is a function in $L^2(\Omega)$ and the operator $\nabla$ should be understood in the sense of distributions. (Here and below, $\Dcal'(\Omega;\R^2)$ and $\Dcal'(\Omega)$ denote the families of $\R^2$-valued and $\R$-valued, respectively, distributions on $\Omega$\,.)
Clearly, if $\beta$ is not a gradient, then equations \eqref{compadebole}
are not satisfied anymore. 
In particular, if the right-hand side of  \eqref{compa10} is equal to some 
$\alpha\in\mathcal{D}'(\Omega;\R^2)$, then \eqref{compa20} becomes
\begin{equation}\label{compa200}
 \Curl\epsilon=\alpha-\nabla f\qquad\textrm{ in }\Dcal'(\Omega;\R^2)\,.
\end{equation}
Moreover, if the right-hand side of \eqref{compa20} is equal to $-\kappa$ where $\kappa\in H^{-1}(\Omega;\R^2)$ is not a gradient, 
then
 \eqref{compa30} becomes
\begin{equation}\label{compa300}
\curl\Curl\epsilon=-\theta\qquad\textrm{in }\Dcal'(\Omega)\,,
\end{equation}
where we have set $\theta\coloneqq\curl\kappa$\,.
Finally, when both incompatibilities are present, we have that
\begin{equation}\label{incfinal}
\curl\Curl\epsilon=\curl\alpha-\theta\qquad\textrm{in }\Dcal'(\Omega)\,.
\end{equation}
We will focus on the case when $\alpha$ and $\theta$ are finite sums of Dirac deltas.
More precisely, we will consider $\alpha\in\ED(\Omega)$ and $\theta\in\WD(\Omega)$\,, where 
\begin{equation*}
\begin{aligned}
\ED(\Omega)\coloneqq&\bigg\{\alpha=\sum_{j=1}^{J}b^j\de_{x^j}\,:\,J\in\N\,,\,b^j\in\R^2\setminus\{0\}\,,\,x^j\in\Omega\,,\,x^{j_1}\neq x^{j_2}\textrm{ for }j_1\neq j_2\bigg\}\,,\\
\WD(\Omega)\coloneqq&\bigg\{\theta=\sum_{k=1}^{K}s^k\de_{y^k}\,:\,K\in\N\,,\,s^k\in\R\setminus\{0\}\,,\,y^k\in\Omega\,,\,\,y^{k_1}\neq y^{k_2}\textrm{ for }k_1\neq k_2\bigg\}\,.
\end{aligned}
\end{equation*}
In this case \eqref{incfinal} reads
\begin{equation}\label{incfinal2}
\curl\Curl\epsilon=-\sum_{j=1}^{J}|b^j|\partial_{\frac{(b^j)^\perp}{|b^j|}}\de_{x^j}-\sum_{k=1}^{K}s^k\de_{y^k}\qquad\textrm{in }\Dcal'(\Omega)\,,
\end{equation}
where we recall that $b^\perp=(-b_2;b_1)$ for every $b=(b_1;b_2)\in\R^2$\,.
The measure $\alpha$ identifies a system of $J$ edge dislocations with Burgers vectors $b^j$\,; the measure $\theta$ identifies a system of $K$ wedge disclinations with Frank angles $s^k$\,. 

\begin{remark}\label{discreteweigths}
\rm{For the sake of simplicity we will assume that the weights $b^j$'s and $s^k$'s of the singularities of $\alpha$ and $\theta$ lie in $\R^2\setminus\{0\}$ and $\R\setminus\{0\}$\,, respectively.
Actually, in the theory of perfect edge dislocations, we have that $b^j\in\mathcal{B}\subset\R^2$\,, where $\mathcal{B}$ is the {\it slip system}, i.e., the (discrete) set of the vectors of the crystallographic lattice. Analogously, in the theory of perfect disclinations, $s^k\in\mathcal{S}$\,, where, in a regular Bravais lattice, $\mathcal{S}$ is given by the integer multiples of the minimal angle $s$ between two adjacent nearest-neighbor bonds of a given point (namely, $s=\pm\frac{\pi}{2}$ in the square lattice and $s=\pm\frac{\pi}{3}$ in the regular triangular lattice). 
Whenever $b^j$ are not vectors in $\mathcal{B}$ or $s^k$ are not angles in $\mathcal{S}$, the corresponding dislocations and disclinations are referred to as \textit{partial}, see \cite{Wit1972,N67}.
Since we will focus only on the regime of finite number of edge dislocations and wedge disclinations, the classes $\mathcal{B}$ and $\mathcal{S}$ do not play any role in our analysis. 
}
\end{remark}

Let $\alpha\in\ED(\Omega)$ and $\theta\in\WD(\Omega)$\,.
Following \cite{W68,dW1}, for every open set $A\subset\Omega$ with $\partial A\cap(\supp\alpha\cup\supp\theta)=\emptyset$ we define the Frank angle $\omega\res A$\,, the Burgers vector ${\bf b}\res A$\,, and the {\it total Burgers vector} ${\bf B}\res A$ restricted to $A$ as
$$
\omega\res A\coloneqq\theta(A)\,,\qquad
{\bf b}\res A \coloneqq \alpha(A)\,,\qquad
{\bf B}\res A\coloneqq{\bf b}\res A-\int_{A}(-x_2;x_1)\,\ud\theta\,.
$$
We notice that in \cite{W68,dW1}\,, the Frank angle is indeed a rotation vector $\bm\Omega\res A$, which in our plane elasticity setting is the vector perpendicular to the cross section given by $\bm\Omega\res A=(0;0;\omega\res A)$\,.

For the purpose of illustration, we notice that if $\supp\theta\subset \Omega\setminus A$\,, then $\omega\res A=0$ and ${\bf B}\res A={\bf b}\res A=\alpha(A)$\,. Now, if $\supp\alpha\subset \Omega\setminus A$ and 
$\theta=s\de_{y}$ for some $y\in A$\,, then
$\omega\res A=\theta(A)=s$\,, ${\bf b}\res A=0$\,, and ${\bf B}\res A=-s(-y_2;y_1)$\,. This illustrates the different contributions of dislocations and disclinations to the quantities $\omega$\,, $\bf b$\,, and $\bf B$ just introduced: dislocations only contribute to the Burgers vector but never to the Frank angle, whereas disclinations contribute both to the Frank angle and to the total Burgers vector.
   
Finally, supposing for convenience that $\supp\alpha\subset\Omega\setminus A$\,, if $\theta=s\big(\de_{y+\frac{h}{2}}-\de_{y-\frac{h}{2}}\big)$ for some $y,h\in\R^2$ with $y\pm\frac{h}{2}\in A$\,, we have that
$$
\omega\res A=0\qquad\textrm{and}\qquad\mathbf{B}\res A=-s(-h_2;h_1)\,,
$$   
which shows that a dipole of opposite disclinations does not contribute to the Frank angle but contributes to the total Burgers vector independently of its center $y$ (see Section~\ref{sub:dipole}).
   
\subsection{Disclinations in terms of the Airy stress function}\label{incairy}
In this subsection, we rewrite the incompatibility condition in \eqref{incfinal} in terms of the Airy stress function $v$ introduced in \eqref{airy}.
To this purpose, assume that $\alpha\equiv 0$\,, so that  \eqref{incfinal} coincides with \eqref{compa300}. 
Here and henceforth we use the symbols $n$ and $t$ to denote the external unit normal and tangent vectors {to the boundary of $\Omega\subset\R^2$\,}, respectively, such  that $t=n^{\perp}=(-n_2;n_1)$\,; in this way, the ordered pair $\{n,t\}$ is a right-handed orthonormal basis of~$\R^2$\,.

Consider $v\colon\Omega\to\R{}$ and let $\sigma=\sigma[v]=\Airy(v)$ (see \eqref{airy}) and $\epsilon[v]=\C^{-1}\sigma[v]$ (see \eqref{strain_stress}). 
Then, formally, 
\begin{subequations}\label{conversions}
\begin{eqnarray}
\curl\Curl \epsilon[v]& \!\!\!\!\equiv& \!\!\!\!\frac{1-\nu^2}{E}\Delta^2 v\,,
\label{airy2} \\
\C\epsilon[v]\,n& \!\!\!\!\equiv& \!\!\!\!\sigma[v]\, n\equiv(\partial^2_{x_2^2}vn_1-\partial_{x_1x_2}^2 v n_2;-\partial^2_{x_1x_2}vn_1+\partial^2_{x_1^2}vn_2)\equiv\nabla^2 v\, t\,. \label{airybdry}
\end{eqnarray}
\end{subequations}
As customary in mechanics, we refer to
the zero-stress boundary condition
  $\C\epsilon[v]\,n=0$
on $\partial\Omega$ as \textit{traction-free}.
With some abuse of notation, we also name traction-free 
the same boundary condition 
measured in terms of the tangential component of the Hessian of the  
  Airy potential, that is $ \nabla^2 v\, t=0$
on $\partial\Omega$. 

If $\epsilon$ satisfies the equilibrium equations subject to the incompatibility constraint \eqref{compa300} for some $\theta\in\WD(\Omega)$, namely
\begin{equation}\label{cauchyepA}
\begin{cases}
\curl\Curl\epsilon=-\theta&\text{in $\Omega$}\\
\Dive\C\epsilon=0&\text{in $\Omega$}\\
\C\epsilon\,n=0&\textrm{on $\partial\Omega$}\,,
\end{cases}
\end{equation} 
then, by \eqref{divenulla} and \eqref{conversions}, the Airy stress function $v$ satisfies the system
\begin{equation}\label{cauchyvA}
\begin{cases}
\displaystyle \frac{1-\nu^2}{E}\Delta^2v=-\theta&\textrm{in $\Omega$}\\[2mm]
\nabla^2v\,t=0&\textrm{on $\partial\Omega$\,.}
\end{cases}
\end{equation}
%
Recalling that \eqref{compa300} holds in the sense of distributions, the study of the regularity of the fields~$\epsilon$ and~$\sigma$ in the laboratory setting and of the Airy stress function~$v$ must be carried out carefully. The reason is the following: the measure of the elastic incompatibility $\theta\in\WD(\Omega)$ is an element of the space $H^{-2}(\Omega)$, so that it is natural to expect that $\epsilon,\sigma\in L^2(\Omega;\R^{2\times2}_{\sym})$ and that $v\in H^2(\Omega)$. At this level $\C\epsilon\,n|_{\partial\Omega}$ and $\nabla^2v\,t|_{\partial\Omega}$ make sense only as elements of $H^{-\frac{1}{2}}(\partial\Omega;\R^{2})$, so that the definition of the boundary conditions in \eqref{cauchyepA} and \eqref{cauchyvA} cannot be intended in a pointwise sense, even when the tangent and normal vectors are defined pointwise.

In Corollaries \ref{prop:airyepsilonA} and \ref{prop:airyepsilon} below, we establish the equivalence of problems \eqref{cauchyepA} and \eqref{cauchyvA} and we show that, under suitable assumptions on the regularity of $\partial\Omega$\,, the boundary conditions hold in the sense of $H^{\frac 1 2}(\partial\Omega;\R^2)$\,. 
To this purpose, we introduce the function $\bar v\in H^2_\loc(\R^2)$ defined by
\begin{equation}\label{fundamdiscl}
\bar v(x)\coloneqq\begin{cases}
\displaystyle \frac{E}{1-\nu^2}\frac{|x|^2}{16\pi}\log|x|^2 & \text{if $x\neq0$}\\[2mm]
0 & \text{if $x=0$}
\end{cases}
\end{equation}
as the fundamental solution to the equation 
\begin{equation}\label{fundbd}
\frac{1-\nu^2}E\Delta^2v=\de_0\quad\text{in $\R^2$\,.}
\end{equation}
Given $\theta=\sum_{k=1}^{K}s^k\de_{y^k}\in\WD(\Omega)$, 
for every $k=1,\ldots,K$, we let $v^{k}(\cdot)\coloneqq -s^k\bar v(\cdot -y^{k})\res\Omega$ and define
\begin{equation}\label{plastic_parts}
v^p\coloneqq \sum_{k=1}^{K} v^k\,,\quad
\sigma^p\coloneqq\sigma^p[v^p]=\Airy(v^p)=\sum_{k=1}^K\Airy(v^k),\quad
\epsilon^p\coloneqq\epsilon^p[v^p]=\C^{-1}\sigma^p[v^p]=\C^{-1}\sigma^p\,,
\end{equation}
which we are going to refer to as the \emph{plastic contributions}.
Notice that, by construction, $v^p$ is smooth in $\R^2\setminus\supp\theta$ and hence on $\partial\Omega$ and so are $\sigma^p$ and $\epsilon^p$\,.

Recalling \eqref{fundamdiscl} and \eqref{fundbd}, we see that 
\begin{equation}\label{bilaplacian_vp}
\frac{1-\nu^2}E\Delta^2v^p=-\theta\qquad \text{in $\Omega$\,,}
\end{equation} 
so that, if $v$ solves the equation in \eqref{cauchyvA} and we define the function $v^e$ through the additive decomposition
\begin{equation}\label{add_dec_v}
v\coloneqq v^p+v^e\,,
\end{equation} 
then, $v^e$ satisfies
\begin{equation}\label{bastaquesta}
\begin{cases}
\displaystyle \frac{1-\nu^2}E\Delta^2v^e=0&\text{in $\Omega$}\\[2mm]
\displaystyle \nabla^2v^e\,t=-\nabla^2v^p\,t&\text{on $\partial\Omega$\,.}
\end{cases}
\end{equation} 
Therefore, by \eqref{bilaplacian_vp}, we can find a solution $v$ to problem \eqref{cauchyvA} if and only if we find a solution to problem \eqref{bastaquesta}.
Similarly, by \eqref{airy2}, 
\begin{equation}\label{inc_ep}
\curl\Curl\epsilon^p=-\theta\qquad \text{in $\Omega$\,,}
\end{equation} 
so that if $\epsilon$ solves the equation in \eqref{cauchyepA} and we define the field $\epsilon^e$ through the additive decomposition 
\begin{equation}\label{add_dec_epsilon}
\epsilon\coloneqq \epsilon^p+\epsilon^e\,,
\end{equation} 
then we have $\curl\Curl\epsilon^e=0$ in $\Omega$ and $\C\epsilon^e\,n=-\C\epsilon^p\,n$ on $\partial\Omega$. 
Therefore, by \eqref{inc_ep}, we find a solution $\epsilon$ to problem \eqref{cauchyepA} if and only if we find a solution to problem
\begin{equation}\label{cauchy_ee}
\begin{cases}
\curl\Curl\epsilon^e=0&\text{in $\Omega$}\\
\Dive\C\epsilon^e=0&\text{in $\Omega$}\\
\C\epsilon^e\,n=-\C\epsilon^p\,n&\textrm{on $\partial\Omega$\,,}
\end{cases}
\end{equation}
where we notice that the second equation above is automatically satisfied by \eqref{divenulla}, in view of the fact that  $\Div\,\sigma^p=0$\,.

We refer to~$v^e$ and~$\epsilon ^e$ as to the \emph{elastic contributions} and we notice that they are compatible fields.
Upon noticing that the function~$\bar v$ is smooth in $\R^2\setminus\{0\}$ and by requiring that the boundary~$\partial\Omega$ be smooth enough, we will see that problems \eqref{bastaquesta} and \eqref{cauchy_ee} are ``equivalent'' and that they admit solutions which are regular enough for the boundary conditions to make sense in $H^{\frac 1 2}(\partial\Omega;\R^2)$\,.

\begin{definition}\label{defdeb}
\rm{
Let $\Omega\subset\R^2$ be a bounded, simply connected, open set.
We say that a function $\epsilon\in L^2(\Omega;\R^{2\times 2}_{\sym})$ (resp., $\epsilon^e\in L^2(\Omega;\R^{2\times 2}_{\sym})$) is a weak solution to
\eqref{cauchyepA} (resp., \eqref{cauchy_ee}) if the first equation is satisfied when tested with $H^2_0(\Omega)$ functions and the second  one is satisfied when tested with $H^1(\Omega;\R^2)$ functions.
 Analogously, we say that a function $v\in H^2(\Omega;\R^2)$ is a weak solution to 
\eqref{cauchyvA} (resp., \eqref{bastaquesta}) if the first equation holds when tested with 
$H^{2}(\Omega)$ functions satisfying the boundary condition.
}
\end{definition}

We start by proving the following result, which is one implication in the equivalence of problems \eqref{cauchy_ee} and \eqref{bastaquesta}.

\begin{proposition}\label{prop:airyepsilonA_soloel}
Let $\Omega\subset\R^2$ be a bounded, simply connected, open set with boundary of class $C^4$ and  let $\theta\in\WD(\Omega)$. Then there exists a unique weak solution (in the sense of Definition \ref{defdeb}) $\epsilon^e\in L^2(\Omega;\R^{2\times 2}_{\sym})$ to 
\eqref{cauchy_ee}. Furthermore, $\epsilon^e\in H^2(\Omega;\R^{2\times 2}_{\sym})$\,. 
Moreover, there exists a function $v^e\in H^4(\Omega)$ such that 
$\epsilon^e=\C^{-1}\Airy(v^e)$.  Finally, any function $v^e\in H^4(\Omega)$, with $\epsilon^e=\C^{-1}\Airy(v^e)$, is a weak solution to 
\eqref{bastaquesta}.
\end{proposition}
\begin{proof}
Let 
\begin{equation*}
\mathsf{E}(\Omega)\coloneqq\{\epsilon\in L^2(\Omega;\R^{2\times 2}_{\sym})\,:\,\curl\Curl\epsilon=0\textrm{ in }H^{-2}(\Omega;\R^{2\times 2})\}
\end{equation*}
and let $G\colon \mathsf{E}(\Omega)\to \R$ be the functional defined by
\begin{equation*}
G(\epsilon)\coloneqq\frac 1 2\int_{\Omega}\C\epsilon:\epsilon\,\ud x+\int_{\Omega} \sigma^p:\epsilon\,\ud x\,,
\end{equation*}
where $\sigma^p$ is defined in \eqref{plastic_parts}. By construction, $G$ is bounded from below in $L^2(\Omega;\R^{2\times 2}_{\sym})$ and $\mathsf{E}(\Omega)$ is a closed subspace of $L^2(\Omega;\R^{2\times 2}_{\sym})$\,.  Therefore, by applying the direct method of Calculus of Variations, $G$ admits a unique minimizer $\epsilon^e$ in $\mathsf{E}(\Omega)$\,.
Now we show that  \eqref{cauchy_ee} is the Euler-Lagrange equation for~$G$\,. Indeed, for any $\eta\in \mathsf{E}(\Omega)$ we have that
\begin{equation}\label{9febbr}
\int_{\Omega}(\C\epsilon^e+\sigma^p):\eta\,\ud x=0;
\end{equation}
invoking Proposition \ref{sv} we have that $\eta=\nabla^{\sym}u$ for some $u\in H^1(\Omega;\R^2)$\,; therefore, since $\C\epsilon^e+\sigma^p\in \R^{2\times 2}_{\sym}$\,, integrating by parts \eqref{9febbr}, we get
\begin{equation*}
0=\int_{\Omega}(\C\epsilon^e+\sigma^p):\nabla u\,\ud x=-\int_{\Omega} u\cdot \Div(\C\epsilon^e+\sigma^p)\,\ud x+\int_{\partial\Omega}(\C\epsilon^e+\sigma^p)n\cdot u\,\ud\Huno\,,
\end{equation*}
which, recalling that $\Div\,\sigma^p=0$\,, by the fundamental lemma of Calculus of Variations, implies that~$\epsilon^e$ satisfies \eqref{cauchy_ee}.
Moreover, by standard regularity results, we have that $\epsilon^e\in H^2(\Omega;\R^{2\times 2}_{\sym})$\,. 
Now we can apply \cite[Theorem 5.6-1(a)]{ciarlet97}, and in particular the argument in \cite[page~397]{ciarlet97}, which guarantees that a strain field $\epsilon^e\in H^m(\Omega;\R^{2\times2}_{\sym})$ admits an Airy stress function $v^e=\Airy^{-1}(\epsilon^e)\in H^{m+2}(\Omega)$, for every $m\geq0$. By applying this result with $m=2$, we obtain that $v^e\in H^4(\Omega)$\,.
Finally, by \eqref{conversions}, we have that any function $v^{e}\in H^4(\Omega)$ with  $\epsilon^e=\C^{-1}\Airy(v^e)$ is a weak solution to \eqref{bastaquesta}. 
\end{proof}
Since $\epsilon^p$ and $v^p$ are smooth in a neighborhood of the boundary of $\Omega$\,, by \eqref{plastic_parts} and by Proposition~\ref{prop:airyepsilonA_soloel}, we immediately deduce the following result.
\begin{corollary}\label{prop:airyepsilonA}
Let $\Omega\subset\R^2$ be a bounded, simply connected, open set with boundary of class~$C^4$ and  let $\theta\in\WD(\Omega)$. 
Then there exists a unique weak solution (in the sense of Definition~\ref{defdeb}) $\epsilon\in L^2(\Omega;\R^{2\times 2}_{\sym})$ to 
\eqref{cauchyepA}. Furthermore, $\C\epsilon^e\,n
\in H^{\frac 3 2}(\partial\Omega;\R^2)$\,. 
Moreover, there exists a function $v\in H^2(\Omega)$ such that 
$\epsilon=\C^{-1}\Airy(v)$.  Finally, $v$ is a weak solution to 
\eqref{bastaquesta}
and $\nabla^2v\,t
\in H^{\frac 3 2}(\partial\Omega;\R^2)$\,.
\end{corollary}

In order to prove the converse implication of Proposition~\ref{prop:airyepsilonA_soloel}, we state the following result, which is an immediate consequence of \cite[Theorem~2.20]{Gazzola09} (applied with $k=4$, $m=n=p=2$, and with $f\equiv 0$ and $h_j\in C^{\infty}$)\,.
\begin{lemma}\label{20220422_00}
Let $A\subset\R^2$ be a bounded open set with boundary of class $C^4$ and let $g$ be a $C^\infty$ function in a neighborhood of $\partial A$\,.
Then there exists a unique weak solution $w\in H^2(A)$ to 
\begin{equation}\label{20220422_10}
\begin{cases}
\displaystyle \frac{1-\nu^2}{E}\Delta^2w=0&\text{in }A\,,\\[2mm]
\displaystyle w=g&\textrm{on }\partial A\,,\\[2mm]
\displaystyle \partial_nw=\partial_ng&\textrm{on }\partial A\,.
\end{cases}
\end{equation}
Moreover, $w\in H^4(A)$\,. 
\end{lemma}
By Lemma \ref{20220422_00} and Proposition \ref{2101141730} below, we have the following result.
\begin{corollary}\label{20220422}
Let $A\subset\R^2$ be a bounded open set with boundary of class $C^4$ and let $f$ be a $C^\infty$ function in a neighborhood of $\partial A$\,.
Let $\Gamma^0,\Gamma^1,\ldots,\Gamma^L$ be the connected components of~$\partial A$\,.
Given $a^0, a^1,\ldots, a^L$ affine functions, there exists a unique weak solution $w\in H^2(A)$ to the problem
\begin{equation}\label{20230508}
\begin{cases}
\displaystyle \frac{1-\nu^2}{E}\Delta^2w=0&\text{in }A\,,\\[2mm]
\displaystyle  w=f+a^l&\textrm{on }\Gamma^l\,, \\[2mm]
\displaystyle \partial_n w=\partial_{n} (f+a^l)&\textrm{on }\Gamma^l\,;
\end{cases}
\end{equation} 
moreover, $w\in H^4(A)$ and satisfies
\begin{equation}\label{20220422_1}
\begin{cases}
\displaystyle \frac{1-\nu^2}{E}\Delta^2w=0&\text{in }A\,,\\[2mm]
\displaystyle \nabla^2 w\,t=\nabla^2f\,t&\textrm{on }\partial A\,.
\end{cases}
\end{equation}
Viceversa, 
 if $w\in H^4(A)$ is a solution to \eqref{20220422_1}, then there exist $a^0, a^1,\ldots, a^L$ affine functions such that
$w$ satisfies \eqref{20230508}. 
\end{corollary}
\begin{proof}
By Lemma \ref{20220422_00}, applied with $g\coloneqq f+a^l$ on $\Gamma^l$\,, we have that there exists a unique weak solution $w\in H^2(A)$ to \eqref{20230508}
and that $w\in H^4(A)$\,.
By the Rellich--Kondrakov Theorem, we have that $w\in C^2(\overline{A})$\,; therefore $w-f$ is of class $C^2$ in a neighborhood of $\partial A$\,. 
We can now apply Proposition \ref{2101141730} to deduce that $\nabla^2w\,t=\nabla^2 f\,t$ on $\partial A$\,, thus obtaining that $w\in H^4(A)$ is a solution to \eqref{20220422_1}.

Viceversa, if $w\in H^4(A)$ is a solution to \eqref{20220422_1}, then, 
by using Proposition~\ref{2101141730} again, we obtain that there exist $a^0,a^1,\ldots,a^L$ affine functions such that $w$ solves \eqref{20230508}.
\end{proof}
\begin{proposition}\label{prop:airyepsilon_soloel}
Let $\Omega\subset\R^2$ be a bounded, simply connected, open set with boundary of class $C^4$ and let $\theta\in\WD(\Omega)$\,. 
Then there exists a weak solution $v^e\in H^4(\Omega)$ to \eqref{bastaquesta}. Furthermore,
any weak solution  $v^e$ to \eqref{bastaquesta}
belongs to $H^4(\Omega)$ and
 the function $\epsilon^e=\C^{-1}\Airy(v^e)$ is the unique weak solution to 
 \eqref{cauchy_ee}. 
\end{proposition}
\begin{proof}
By applying Corollary \ref{20220422} with $f=-v^p$ (with $v^p$ defined in \eqref{plastic_parts}) and $A= \Omega$\,, we immediately have that there exists a weak solution $w\in H^4(\Omega)$ to \eqref{bastaquesta}.
Moreover, by \eqref{conversions}, we have that for any weak solution $v^e\in H^2(\Omega)$ to \eqref{bastaquesta}, the function
 $\epsilon^e=\C^{-1}\Airy(v^e)\in L^2(\Omega;\R^{2\times 2}_{\sym})$ is a weak solution to 
\eqref{cauchy_ee}. 
Owing to Proposition~\ref{prop:airyepsilonA_soloel}, the solution $\epsilon^e$ to \eqref{cauchy_ee} is unique and belongs to $H^4(\Omega)$\,. It follows that any weak solution $v^e$ to \eqref{bastaquesta} is actually in $H^4(\Omega)$\,.
\end{proof}
\begin{corollary}\label{prop:airyepsilon}
Let $\Omega\subset\R^2$ be a bounded, simply connected, open set with boundary of class $C^4$ and let $\theta\in\WD(\Omega)$\,. 
Then there exists a weak solution $v\in H^2(\Omega)$ to \eqref{cauchyvA} and the condition $\nabla^2v\,t=0$ on $\partial\Omega$ holds in $H^{\frac 3 2}(\partial\Omega;\R^2)$\,. 
 Furthermore,
for any weak solution  $v$ to \eqref{cauchyepA},
 the function $\epsilon=\C^{-1}\Airy(v)$ is the unique  weak solution to 
 \eqref{cauchyepA}. 
\end{corollary}
Finally, by arguing as in the proof of Proposition \ref{prop:airyepsilon_soloel} and using the additive decomposition in \eqref{add_dec_v}, one can easily prove the following result.
\begin{proposition}\label{20220421}
Let $A\subset\R^2$ be a bounded open set with boundary of class $C^4$\,. Let $\theta\in\WD(A)$ and let $v\in H^2(A)$ be such that
\begin{equation}\label{2204061255}
\frac{1-\nu^2}{E}\Delta^2v=-\theta\qquad\text{in $A$.}
\end{equation}
Then denoting by $\Gamma^0,\Gamma^1,\ldots,\Gamma^L$ the connected components of $\partial A$\,, we have that 
\begin{equation}\label{20220421_1}
\nabla^2v\,t=0\quad\textrm{on }\partial A\quad\Leftrightarrow\quad v=a^l\,, \quad\partial_n v=\partial_n a^l\quad\textrm{on }\Gamma^l\,,\quad\textrm{ for every }l=0,1,\ldots,L\,,
\end{equation}
where $a^0,a^1,\ldots,a^L$ are affine functions.
\end{proposition}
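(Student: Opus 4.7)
The plan is to interpret the boundary expression $\nabla^2 v\,t$ as the tangential derivative of the trace of $\nabla v$, and then reduce \eqref{20220421_1} to a pointwise characterization on each connected component of $\partial A$. Since mixed partials commute, one has componentwise $(\nabla^2 v\,t)_i = \sum_j t_j\,\partial_j\partial_i v = \partial_t((\nabla v)_i)$, so the condition $\nabla^2 v\,t=0$ on $\partial A$ should be read as $\partial_t(\nabla v|_{\partial A})=0$ in $H^{-1/2}(\partial A;\R^2)$—recalling that $v\in H^2(A)$ yields $\nabla v|_{\partial A}\in H^{1/2}(\partial A;\R^2)$. Since $\supp\theta\cap\partial A=\emptyset$ and $\partial A\in C^4$, elliptic boundary regularity for the biharmonic equation upgrades $v$ to $H^4$ near $\partial A$ (as in the proof of Proposition \ref{prop:airyepsilon}), so the identification is in fact classical. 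This reduces the proposition to characterizing when $\partial_t(\nabla v)$ vanishes on each $\Gamma^l$.

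For the forward implication, I note that on the simple closed $C^4$-curve $\Gamma^l$ an $H^{1/2}$-function whose arclength derivative vanishes in $H^{-1/2}$ must be constant (parametrize $\Gamma^l$ by arclength, identifying it with a circle, and argue by Fourier series, or by density of smooth functions and integration by parts). Hence $\nabla v|_{\Gamma^l}\equiv c^l\in\R^2$. I then define the affine function $a^l(x)\coloneqq c^l\cdot x + d^l$ and fix $d^l$ so that $a^l(x_0^l)=v(x_0^l)$ at a chosen $x_0^l\in\Gamma^l$; evaluating pointwise is legitimate because $v|_{\Gamma^l}\in H^{3/2}(\Gamma^l)\hookrightarrow C(\Gamma^l)$ in one dimension. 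Since $\partial_t(v-a^l)=(\nabla v-c^l)\cdot t=0$ on $\Gamma^l$ and $v-a^l$ vanishes at $x_0^l$, connectedness of $\Gamma^l$ forces $v=a^l$ on $\Gamma^l$; the normal condition $\partial_n v = c^l\cdot n = \partial_n a^l$ is then immediate.

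The reverse implication is direct: if $v=a^l$ and $\partial_n v=\partial_n a^l$ on $\Gamma^l$, then the tangential and normal components of $\nabla v|_{\Gamma^l}$ both agree with those of $\nabla a^l$, so $\nabla v|_{\Gamma^l}\equiv\nabla a^l=c^l$ is constant along $\Gamma^l$, whence $\partial_t(\nabla v)=0$ on $\Gamma^l$. The principal delicacy I anticipate is the first step—giving a rigorous meaning to the trace $\nabla^2 v\,t\big|_{\partial A}$ under the sole assumption $v\in H^2(A)$, since the Hessian only lies in $L^2(A)$ and has no intrinsic $L^2$-trace; I would adopt $\partial_t(\nabla v|_{\partial A})\in H^{-1/2}(\partial A;\R^2)$ as the definition and verify compatibility with the interpretations used in Propositions \ref{prop:airyepsilonA} and \ref{prop:airyepsilon} and in Lemma \ref{20220422}.
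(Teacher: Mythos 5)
Your route is genuinely different from the paper's. The paper proves this proposition by a regularity bootstrap plus a geometric ODE argument: it writes $v=v^e+v^p$ with $v^p$ smooth near $\partial A$, uses the assumed boundary condition (either $\nabla^2v\,t=0$ via Lemma~\ref{20220422}, or the Dirichlet-type data via standard regularity for \eqref{20220425_3}) to get $v^e\in H^4(A)\hookrightarrow C^2(\overline A)$, and then invokes Proposition~\ref{2101141730}, whose proof analyses the system \eqref{eqdiff} for $g_D=v\circ\gamma$, $g_N=\partial_nv\circ\gamma$ involving the curvature $\vk$. You instead exploit the identity $\nabla^2v\,t=\partial_t\big(\nabla v|_{\partial A}\big)$ along the boundary and work directly at the $H^{1/2}/H^{-1/2}$ trace level: the condition becomes ``$\nabla v|_{\Gamma^l}$ is constant'', which immediately yields (and is equivalent to) the affine Dirichlet data. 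This is more elementary, bypasses both the regularity upgrade and the curvature ODE, and in fact never uses the equation \eqref{2204061255} except to link your weak interpretation of the trace with the one used elsewhere in the paper. That is a real simplification, at the price of shifting all the weight onto the meaning of the boundary condition.

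Two points need repair. First, your justification of classical traces near $\partial A$ is incorrect as stated: knowing only that $\Delta^2v=0$ in a neighbourhood of $\partial A$ intersected with $A$ (i.e.\ $\supp\theta\cap\partial A=\emptyset$) gives interior smoothness but no regularity up to the boundary; boundary regularity for the bilaplacian requires boundary conditions, which is exactly how the paper obtains $v^e\in H^4(A)$ in each implication separately. Second, the compatibility you defer is the actual crux: the paper's boundary condition is the $H^{-1/2}$ normal trace of the divergence-free stress $\sigma[v]=\Airy(v)$ (equivalently, the $H^{1/2}$ trace of $\nabla^2v\,t$ after the decomposition $v=v^e+v^p$), and you must show it coincides with $\partial_t(\nabla v|_{\partial A})$. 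This is true and not hard — both maps $v\mapsto\sigma[v]\,n$ (defined by duality, continuous since $\sigma[v]\in L^2$ with $\Dive\sigma[v]=0$) and $v\mapsto\partial_t(\nabla v|_{\partial A})$ are continuous from $H^2(A)$ to $H^{-1/2}(\partial A;\R^2)$ and agree on smooth functions, so density of $C^\infty(\overline A)$ in $H^2(A)$ gives the identification; alternatively one can argue via the decomposition as the paper does. With that verification written out (and the spurious regularity claim removed), your proof is complete and is a legitimately shorter alternative to the paper's argument.
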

\begin{remark}
{\rm 
 We highlight that, since $\Omega$ is simply connected, the solution to \eqref{bastaquesta} is unique up to an affine function.
Indeed, given two solutions $v^e$ and $\tilde v^e$ of \eqref{bastaquesta}\,, 
the function $w\coloneqq v^e-\tilde v^e$ satisfies
\begin{equation*}
\begin{cases}
\displaystyle \frac{1-\nu^2}E\Delta^2w=0&\text{in $\Omega$}\\[2mm]
\displaystyle \nabla^2w\,t=0&\text{on $\partial\Omega$\,.}
\end{cases}
\end{equation*} 
Moreover, by Proposition \ref{20220421}, such $w$ is affine. Therefore,
 the function $v^e$ satisfying \eqref{bastaquesta} is uniquely determined up to affine functions.
 Analogously,  the function $v$ satisfying \eqref{cauchyvA} is uniquely determined up to affine functions.
}
\end{remark}


\section{Finite systems of isolated disclinations}\label{sc:isolated_disclinations}
We now study the equilibrium problem for a finite family of isolated disclinations in a body $\Omega$. 
The natural idea would be to consider the minimum problem for the elastic energy $\G$ defined in \eqref{energyairy} under the incompatibility constraint \eqref{cauchyvA}\,, associated with a measure $\theta\in\WD(\Omega)$\,; 
however, this is inconsistent, since one can easily verify that the Euler--Lagrange equation for $\G$ is $\Delta^2v=0$\,.

To overcome this inconsistency, we define a suitable functional which embeds the presence of the disclinations and whose Euler--Lagrange equation is given by \eqref{cauchyvA}. 
To this purpose, let $\Omega\subset\R^2$ be a bounded, open, and simply connected set with boundary of class $C^4$\,;
 for every $\theta\in\WD(\Omega)$ let $\I^\theta\colon H^2(\Omega)\to \R$ be the functional defined by
\begin{equation}\label{defI}
\I^{\theta}(v;\Omega)\coloneqq\G(v;\Omega)+\langle \theta,v\rangle\,,
\end{equation}
and consider the minimum problem
\begin{equation}\label{minI}
\min \big\{\I^\theta(v;\Omega) : \text{$v\in H^2(\Omega)$\,, $\nabla^2v\,t=0$ on $\partial\Omega$}\big\}\,.
\end{equation}
A simple calculation shows that the Euler--Lagrange equation for the functional \eqref{defI}, with respect to variations in $H^2_0(\Omega)$\,, is given by \eqref{cauchyvA}.
By Proposition \ref{20220421}, we deduce that the minimum problem in \eqref{minI} is equivalent, up to an affine function, to the minimum problem
\begin{equation}\label{minIaff}
\min \{\I^\theta(v;\Omega)\,:\,v\in H_0^2(\Omega)\}\,.
\end{equation}
\begin{lemma}\label{propItheta}
For every $\theta\in\WD(\Omega)$, the functional $\I^\theta(\cdot;\Omega)$ is strictly convex in $H^2(\Omega)$ and it is  bounded below and coercive in $H^2_0(\Omega)$\,.
As a consequence, the minimum problem \eqref{minIaff} has a unique solution. 
\end{lemma}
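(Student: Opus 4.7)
The plan is to exploit the quadratic nature of $\G(\cdot;\Omega)$, a pointwise algebraic inequality relating the Hessian to the Laplacian, and a Poincar\'e-type estimate on $H^2_0(\Omega)$. First, diagonalizing the (symmetric) Hessian of $w\in H^2(\Omega)$ pointwise with eigenvalues $\lambda_1,\lambda_2$ gives $|\nabla^2 w|^2=\lambda_1^2+\lambda_2^2$ and $|\Delta w|^2=(\lambda_1+\lambda_2)^2$; the resulting quadratic form $(\lambda_1,\lambda_2)\mapsto\lambda_1^2+\lambda_2^2-\nu(\lambda_1+\lambda_2)^2$ has eigenvalues $1$ and $1-2\nu$. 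Under the admissible range $-1<\nu<\tfrac12$ from \eqref{lame3}, both are strictly positive, giving the pointwise bound
\begin{equation*}
|\nabla^2 w|^2-\nu|\Delta w|^2 \ge (1-2\max\{\nu,0\})\,|\nabla^2 w|^2.
\end{equation*}
Combined with the prefactor $\tfrac{1+\nu}{2E}>0$, this yields $\G(v;\Omega)\ge c_0\|\nabla^2 v\|_{L^2(\Omega)}^2$ for some $c_0>0$ and every $v\in H^2(\Omega)$.

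For boundedness from below and coercivity on $H^2_0(\Omega)$, the Poincar\'e inequality gives $\|v\|_{H^2(\Omega)}\le C\|\nabla^2 v\|_{L^2(\Omega)}$, so $\G(v;\Omega)\ge c_1\|v\|_{H^2(\Omega)}^2$. To control the linear term I use the two-dimensional Sobolev embedding $H^2(\Omega)\hookrightarrow C^0(\overline\Omega)$, which implies $|v(y^k)|\le C_S\|v\|_{H^2(\Omega)}$ for every $k$, whence
\begin{equation*}
|\langle\theta,v\rangle| = \Big|\sum_{k=1}^{K} s^k v(y^k)\Big| \le C_S \Big(\sum_{k=1}^{K} |s^k|\Big) \|v\|_{H^2(\Omega)}.
\end{equation*}
Therefore $\I^\theta(v;\Omega)\ge c_1\|v\|_{H^2(\Omega)}^2-C_\theta\|v\|_{H^2(\Omega)}$, which is bounded below by $-C_\theta^2/(4c_1)$ and diverges to $+\infty$ as $\|v\|_{H^2(\Omega)}\to\infty$.

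For strict convexity, the quadratic structure of $\G$ yields
\begin{equation*}
\I^\theta(\lambda v_1+(1-\lambda)v_2;\Omega) = \lambda\I^\theta(v_1;\Omega)+(1-\lambda)\I^\theta(v_2;\Omega)-\lambda(1-\lambda)\G(v_1-v_2;\Omega)
\end{equation*}
for all $v_1,v_2\in H^2(\Omega)$ and $\lambda\in(0,1)$, and the first-paragraph bound shows $\G(v_1-v_2;\Omega)>0$ whenever $\nabla^2(v_1-v_2)\not\equiv 0$. On $H^2(\Omega)$ this is strict convexity modulo the kernel of $\nabla^2$ (the affine functions, consistent with the indeterminacy already observed in Proposition~\ref{prop:airyepsilonA}); on $H^2_0(\Omega)$ that kernel reduces to $\{0\}$, so strict convexity is genuine. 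Existence of a minimizer in $H^2_0(\Omega)$ then follows from the direct method: coercivity produces a bounded minimizing sequence, which, up to subsequences, converges weakly in $H^2_0(\Omega)$; $\G(\cdot;\Omega)$ is convex and continuous hence weakly lower semicontinuous, while $\langle\theta,\cdot\rangle$ is continuous on the weak $H^2$ topology via the compact embedding into $C^0(\overline\Omega)$. Uniqueness is then immediate from strict convexity on $H^2_0(\Omega)$. The only real check is the positivity of $(1-2\max\{\nu,0\})$ under \eqref{lame3}; no step presents a serious obstacle.
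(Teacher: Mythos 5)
Your proof is correct and follows essentially the same route as the paper: the lower bound $\G(v;\Omega)\ge c_0\|\nabla^2 v\|^2_{L^2(\Omega)}$ (your eigenvalue computation just makes explicit the constant $\min\{1,1-2\nu\}$ the paper asserts), Friedrichs/Poincar\'e on $H^2_0(\Omega)$, the embedding $H^2\hookrightarrow C^0$ for the term $\langle\theta,v\rangle$, the quadratic identity for convexity, and the direct method. Your remark that strict convexity on all of $H^2(\Omega)$ only holds modulo affine functions (the kernel of $\nabla^2$), while genuine on $H^2_0(\Omega)$, is a slightly more careful formulation than the paper's, and is exactly what the uniqueness statement needs.
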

\begin{proof}
We start by proving that $\I^\theta(\cdot;\Omega)$ is bounded below and coercive in $H^2_0(\Omega)$\,. To this purpose, 
we first notice that there exists a constant $C_1=C_1(\nu,E,\Omega)>0$ such that for every $v\in H^2_0(\Omega)$
\begin{equation}\label{quadr}
 \G(v;\Omega)\ge\frac 1 2\frac{1-\nu^2}{E}\min\{1-2\nu,1\}\|\nabla^2 v\|^2_{L^2(\Omega;\R^{2\times 2})}\ge C_1\|v\|^2_{H^2(\Omega)}\,,
\end{equation}
where in the last passage we have used Friedrichs's inequality in $H^2_0(\Omega)$\,. Notice that the positivity of $C_1$ is a consequence of \eqref{lame3}.

Now, using that $H^2_0(\Omega)$ embeds into $C^0(\Omega)$\,, we have that there exists a constant $C_2=C_2(\theta,\Omega)>0$ such that for every $v\in H^2_0(\Omega)$
\begin{equation}\label{linear} 
\langle \theta,v\rangle=\sum_{k=1}^{K}s^kv(x^k)\ge -C_2\|v\|_{H^2(\Omega)}\,.
\end{equation}
By \eqref{quadr} and \eqref{linear}, we get that for every $v\in H^2_0(\Omega)$
\begin{equation}\label{pincapalla}
\I^\theta(v;\Omega)\ge  C_1\|v\|^2_{H^2(\Omega)}-C_2\|v\|_{H^2(\Omega)}\ge
 -\frac{C_2^2}{4C_1}\,,
\end{equation}
which implies boundedness below and coercivity of $\I^\theta(\cdot;\Omega)$ in $H^2_0(\Omega)$\,.

Now we show that $\G(\cdot;\Omega)$ is strictly convex in $H^2(\Omega)$\,, which, together with the linearity of the map $v\mapsto\langle\theta,v\rangle$\,, implies the strict convexity of $\I^\theta(\cdot;\Omega)$ in $H^2(\Omega)$\,. To this purpose, let $v,w\in H^2(\Omega)$ with $v\neq w$ and let $\lambda\in (0,1)$\,; then a simple computation shows that
\begin{equation}\label{strictconv}
\begin{split}
\G(\lambda v+(1-\lambda)w;\Omega)=&\,\lambda\G(v;\Omega)+(1-\lambda)\G(w;\Omega)-\lambda(1-\lambda)\G(v-w;\Omega)\\
<&\, \lambda\G(v;\Omega)+(1-\lambda)\G(w;\Omega)\,,
\end{split}
\end{equation}
which is the strict convexity condition.

By the direct method of the Calculus of Variations, problem \eqref{minIaff} has a unique solution.
\end{proof}
\begin{remark}
\rm{
We highlight that inequality \eqref{pincapalla} shows that $\I^\theta(\cdot;\Omega)$ could be negative.
In particular, being $\G$ non-negative, the sign of $\I^\theta$ is determined by the value of the linear contribution $\langle \theta,v\rangle$\,. It follows that the minimum problem \eqref{minI} and hence \eqref{minIaff} are non trivial and, as we will see later (see, e.g., \eqref{minenball}), the minimum of $\I^\theta(\cdot;\Omega)$ is indeed negative.
}
\end{remark}
\begin{remark}\label{equinorm}
\rm{
Notice that the functional $\G^{\frac 1 2}(\cdot;\Omega)$ defines a seminorm on $H^2(\Omega)$ and a norm in $H^2_0(\Omega)$\,, since $\G(v;\Omega)\equiv\langle v,v\rangle_{\G_\Omega} $ where the product $\langle \cdot,\cdot\rangle_{\G_\Omega}$\,, defined by 
\begin{equation}\label{semiprod}
\langle v,w\rangle_{\G_\Omega}\coloneqq \frac 1 2 \frac{1+\nu}{E}\int_{\Omega} \big(\nabla^2v:\nabla^2 w-\nu \Delta v\Delta w\big)\,\ud x\,,
\end{equation}
is a bilinear, symmetric, and positive semidefinite form in $H^2(\Omega)$ and positive definite in $H^2_0(\Omega)$\,.
We remark that in $H^2_0(\Omega)$ the norm $\G^{\frac 1 2}(\cdot;\Omega)$ is equivalent to the standard norm $\|\cdot\|_{H^2(\Omega)}$\,.
}
\end{remark}
In the following lemma, for any given $\xi\in\R^2$ and $R>0$, we compute the minimal value of $\I^\theta(\cdot;B_R(\xi))$ associated with a single disclination located at $\xi$, corresponding to $\theta=s\de_\xi$ for some $s\in\R\setminus\{0\}$\,. 
The explicit computation is straightforward and is omitted.
\begin{lemma}\label{sub:single}
Let $s\in\R\setminus\{0\}$\,, $\xi\in\R^2$\,, and $R>0$.
The function $v_R\colon \overline{B}_R(\xi)\to\R$ 
defined by
\begin{equation*} 
\begin{aligned}
v_R(x)\coloneqq & -s\bar v (x-\xi)-s\frac{E}{1-\nu^2}\frac{R^2-|x-\xi|^2(1+\log R^2)}{16\pi}\\
=&-sR^2\bigg(\bar v \Big(\frac {x-\xi} R\Big)+\frac{E}{1-\nu^2}\frac{1}{16\pi}\Big(1-\Big|\frac{x-\xi}{R}\Big|^2\Big)\bigg)\,,
\end{aligned}
\end{equation*}
with $\bar v$ as in \eqref{fundamdiscl}, belongs to $H^2(B_R(\xi))\cap {C^\infty(B_R(\xi)\setminus\{\xi\})}$ and solves 
\begin{equation}\label{cauchyvR}
\begin{cases}
\displaystyle \frac{1-\nu^2}{E}\Delta^2v=-s\de_\xi&\text{in $B_R(\xi)$}\\[2mm] 
\displaystyle v=\partial_nv=0&\text{on $\partial B_R(\xi)$\,.}
\end{cases}
\end{equation}
Hence $v_R$ is the only minimizer of problem \eqref{minIaff} for $\Omega=B_R(\xi)$ and $\theta=s\de_\xi$\,.
Moreover,
\begin{equation}\label{energyfinite}
\G(v_R;B_R(\xi))=\frac{E}{1-\nu^2}\frac{s^2R^2}{32\pi}\qquad\textrm{and}\qquad \langle s\de_\xi,v_R\rangle=-\frac{E}{1-\nu^2}\frac{s^2R^2}{16\pi}\,,
\end{equation}
so that
\begin{equation}\label{minenball}
\min_{v\in H^2_0(B_R(\xi))}\I^{s\de_\xi}(v; B_R(\xi))=\I^{s\de_\xi}(v_R; B_R(\xi))=-\frac{E}{1-\nu^2}\frac{s^2R^2}{32\pi}\,.
\end{equation}
\end{lemma}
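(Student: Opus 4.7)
The proof is a direct verification. The plan is: first check that the explicit $v_R$ has the claimed regularity and solves the boundary value problem \eqref{cauchyvR}; then exploit strict convexity to identify $v_R$ as the unique minimizer; finally compute the energy by testing the Euler--Lagrange equation against $v_R$ itself.

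First I would inspect the singular contribution $-s\bar v(\cdot-\xi)$. From the explicit form \eqref{fundamdiscl}, a radial calculation shows that its second derivatives have at most a logarithmic singularity at $\xi$, which is in $L^2_{\loc}(\R^2)$; hence this part lies in $H^2(B_R(\xi))$. The remaining contribution is a polynomial of degree two, which is smooth. Therefore $v_R\in H^2(B_R(\xi))\cap C^\infty_{\loc}(B_R(\xi)\setminus\{\xi\})$. Next, by evaluating the radial expression of $v_R$ and of $\partial_r v_R$ at $r=R$, one checks that both vanish pointwise; because $v_R$ is smooth in a neighborhood of $\partial B_R(\xi)$, this gives $v_R\in H^2_0(B_R(\xi))$.

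For the PDE, the key ingredient is that $\bar v$ is the fundamental solution \eqref{fundbd}, so $\frac{1-\nu^2}{E}\Delta^2[-s\bar v(\cdot-\xi)]=-s\delta_\xi$ in $\Dcal'(B_R(\xi))$; the polynomial part of $v_R$ is biharmonic and contributes nothing. Hence $\frac{1-\nu^2}{E}\Delta^2 v_R=-s\delta_\xi$ in $\Dcal'(B_R(\xi))$. The Dirichlet data $v_R=\partial_n v_R=0$ on $\partial B_R(\xi)$ translate, via Proposition \ref{20220421} applied with the affine functions $a^l\equiv 0$, into $\nabla^2 v_R\,t=0$ on $\partial B_R(\xi)$, so \eqref{cauchyvR} is fully satisfied.

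To conclude that $v_R$ is the unique minimizer of \eqref{minIaff} on $H^2_0(B_R(\xi))$, I would note that the Euler--Lagrange equation of $\I^{s\de_\xi}(\cdot;B_R(\xi))$ with respect to variations in $H^2_0(B_R(\xi))$ is precisely $\frac{1-\nu^2}{E}\Delta^2 v=-s\delta_\xi$, which we have just verified for $v_R$. Strict convexity and coercivity of $\I^{s\de_\xi}$ on $H^2_0(B_R(\xi))$, proved in Lemma \ref{propItheta}, then force uniqueness. Finally, using the bilinear form \eqref{semiprod} and testing the Euler--Lagrange relation against $v_R$ itself gives
\begin{equation*}
2\G(v_R;B_R(\xi))+\langle s\de_\xi,v_R\rangle=0.
\end{equation*}
A direct substitution $x=\xi$ in the explicit formula for $v_R$ (only the quadratic part contributes, since $\bar v(0)=0$) yields $v_R(\xi)=-\frac{E}{1-\nu^2}\frac{sR^2}{16\pi}$, and hence $\langle s\de_\xi,v_R\rangle=-\frac{E}{1-\nu^2}\frac{s^2R^2}{16\pi}$. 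Combining with the previous identity delivers both relations in \eqref{energyfinite} and then \eqref{minenball} via $\I^{s\de_\xi}(v_R;B_R(\xi))=\G(v_R;B_R(\xi))+\langle s\de_\xi,v_R\rangle$. No serious obstacle is anticipated: the only mildly delicate point is the regularity check at $r=0$, which is settled by the logarithmic bound on $\nabla^2\bar v$.
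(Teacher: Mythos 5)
Your proposal is correct. The paper itself omits the proof, describing it as a straightforward explicit computation, and your argument is exactly the intended direct verification: checking $\bar v\in H^2_{\loc}$ plus smoothness of the quadratic correction, the pointwise vanishing of $v_R$ and $\partial_n v_R$ at $|x-\xi|=R$, and the distributional identity $\tfrac{1-\nu^2}{E}\Delta^2 v_R=-s\de_\xi$ via \eqref{fundbd}, after which convexity (Lemma \ref{propItheta}) upgrades the critical point to the unique minimizer. The one place where you genuinely streamline matters is the energy computation: instead of integrating $|\nabla^2 v_R|^2-\nu|\Delta v_R|^2$ over the ball (which is presumably the "explicit computation" the authors had in mind), you test the vanishing first variation against $v_R\in H^2_0(B_R(\xi))$ to get $2\G(v_R;B_R(\xi))+\langle s\de_\xi,v_R\rangle=0$, and then only need $v_R(\xi)=-\tfrac{E}{1-\nu^2}\tfrac{sR^2}{16\pi}$; this is legitimate because the first variation, which vanishes on $C^\infty_c$, extends to all of $H^2_0$ by density together with the continuity of $\phi\mapsto\langle v_R,\phi\rangle_{\G_{B_R(\xi)}}$ and of $\phi\mapsto s\phi(\xi)$ (Sobolev embedding $H^2_0\hookrightarrow C^0$ in two dimensions) — a step worth stating explicitly, but standard. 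The appeal to Proposition \ref{20220421} is harmless though not needed, since \eqref{minIaff} is already posed on $H^2_0(B_R(\xi))$.
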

In view of \eqref{energysigma} and \eqref{energyairy}, the first equality in \eqref{energyfinite} is the stored elastic energy of a single disclination located at the center of the ball $B_R(\xi)$. Observe that, according to the formulation of the mechanical equilibrium problem in the Airy variable \eqref{cauchyvR}, the contribution of the linear term in the second equality in \eqref{energyfinite} adds to the total energy functional of the system, but does not correspond to an energy of elastic nature.

\section{Dipole of disclinations}\label{sub:dipole}
In \eqref{energyfinite} we have seen that an isolated disclination in the center of a ball of radius $R$ carries an elastic energy of the order $R^2$\,.
Here we show that the situation dramatically changes when considering a dipole of disclinations with opposite signs; indeed, when the distance between the disclinations vanishes, a dipole of disclinations behaves like an edge dislocation and its elastic energy is actually of the order $\log R$\,.

\subsection{Dipole of disclinations in a ball}\label{sub:dipole1}
For every $h>0$ let 
\begin{equation}\label{poledipoles}
y^{h,\pm}\coloneqq\pm\frac h 2(1;0)
\end{equation} 
and
let $\bar v_h\colon\R^2\to \R$ be the function defined by
\begin{equation}\label{defvbarh}
\bar v_h(x)\coloneqq -s(\bar v(x-y^{h,+})-\bar v(x-y^{h,-}))\,,
\end{equation}
where $\bar v$ is given in \eqref{fundamdiscl}. 
By construction, $\bar v_h\res{B_R(0)}\in H^2(B_R(0))$ and  
\begin{equation}\label{bilaplvbarh}
\Delta^2\bar v_h=-\theta_h\qquad\textrm{in }\R^2\,,
\end{equation}
where we have set
\begin{equation}\label{thetah}
\theta_h\coloneqq s\big(\de_{y^{h,+}}-\de_{y^{h,-}}\big)\,.
\end{equation}
We start by proving that the $H^2$ norm of $\bar v_h$ in an annulus $A_{r,R}(0)\coloneqq B_R(0)\setminus \overline B_r(0)$ with fixed radii $0<r<R$ vanishes as $h\to 0$.
\begin{lemma}\label{lemma:edgetrue}
For every $0<r<R$ there exists a constant $C(r,R)$ such that
\begin{equation}\label{f:edgetrue}
\lim_{h\to 0}\frac{1}{h^2}\|\bar v_h\|^2_{H^2(A_{r,R}(0))}=C(r,R)s^2\,.
\end{equation}
\end{lemma}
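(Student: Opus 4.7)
The key observation is that $\bar v_h$ is a finite difference of translates of the fundamental solution $\bar v$ along the $x_1$ axis at scale $h$, so that upon dividing by $h$ one should recover the derivative $\partial_{x_1}\bar v$. Specifically, for $h<2r$ the two poles $y^{h,\pm}$ lie inside $B_{r}(0)$ and thus $\bar v_h\in C^\infty(\overline{A_{r,R}(0)})$, and on the closed annulus $\bar v$ together with all its derivatives of any order are bounded. The plan is to exploit this smoothness, pass to the limit in the finite difference, and identify the limiting $H^2$ norm.

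First I would write, for $x\in A_{r,R}(0)$,
\begin{equation*}
\bar v(x-y^{h,+})-\bar v(x-y^{h,-})=-\int_{-h/2}^{h/2}\partial_{x_1}\bar v(x-t e_1)\,\ud t,
\end{equation*}
expand $\partial_{x_1}\bar v(x-t e_1)=\partial_{x_1}\bar v(x)-t\,\partial^2_{x_1^2}\bar v(x)+t^2 R_h(x,t)$ by Taylor's formula with integral remainder, where $R_h(x,t)$ is bounded, together with all its $x$-derivatives, uniformly for $(x,t)\in\overline{A_{r,R}(0)}\times[-h/2,h/2]$ (this uses $\mathrm{dist}(x-te_1,0)\ge r/2$ for $h$ small). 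The odd power of $t$ integrates to zero, yielding
\begin{equation*}
\bar v_h(x)=s h\,\partial_{x_1}\bar v(x)+h^3\,\rho_h(x),
\end{equation*}
with $\rho_h$ bounded in $C^2(\overline{A_{r,R}(0)})$ uniformly in $h$.

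Next I would differentiate this identity termwise up to order two: since $\partial^\alpha$ commutes with the translation and finite-difference operation, the same Taylor argument applied to $\partial^\alpha\bar v$ for $|\alpha|\le 2$ shows that $\tfrac{1}{h}\partial^\alpha\bar v_h\to s\,\partial^\alpha\partial_{x_1}\bar v$ uniformly on $\overline{A_{r,R}(0)}$, with error $O(h^2)$. Consequently
\begin{equation*}
\frac{1}{h^2}\|\bar v_h\|^2_{H^2(A_{r,R}(0))}\longrightarrow s^2\,\|\partial_{x_1}\bar v\|^2_{H^2(A_{r,R}(0))}\eqqcolon C(r,R)\,s^2,
\end{equation*}
and $C(r,R)$ is finite because $\partial_{x_1}\bar v$ lies in $C^\infty(\overline{A_{r,R}(0)})$ (the singularity of $\bar v$ is excluded from the annulus).

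There is essentially no conceptual obstacle here; the argument is just a careful finite-difference-to-derivative limit. The only point requiring a little care is the uniform control of the Taylor remainder in the $C^2$ norm on the annulus, which however follows immediately from the fact that $\bar v$ and all its derivatives are bounded on any compact subset of $\R^2\setminus\{0\}$, together with the restriction $h\ll r$ ensuring that the segment from $x-y^{h,-}$ to $x-y^{h,+}$ stays a definite distance away from the origin for every $x\in A_{r,R}(0)$. The explicit value $C(r,R)=\|\partial_{x_1}\bar v\|^2_{H^2(A_{r,R}(0))}$ can, if needed, be computed in closed form using polar coordinates and the explicit expression \eqref{fundamdiscl}, but this is not required for the statement.
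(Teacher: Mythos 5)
Your proof is correct and follows essentially the same route as the paper: both identify the limit of $\bar v_h/h$ on the annulus as $s\,\partial_{x_1}\bar v$ (the paper's $\bar v'$) and pass this to the $H^2(A_{r,R}(0))$ norm using smoothness of $\bar v$ away from the origin, so that $C(r,R)=\|\partial_{x_1}\bar v\|^2_{H^2(A_{r,R}(0))}$. The only difference is cosmetic: you justify the convergence via a finite-difference/Taylor expansion with uniform $C^2$ remainder bounds, whereas the paper differentiates in $h$ pointwise, invokes dominated convergence, and then computes the derivatives of $\bar v'$ explicitly to exhibit the constant.
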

\begin{proof}
{
By straightforward computations, for every $x\in A_{h,R}(0)$ we have that
\begin{equation}\label{perdislo}
\begin{split}
\bar v_h(x)=&\,-\frac{E}{1-\nu^2}\frac{s}{16\pi}\bigg(\Big(|x|^2+\frac{h^2}{4}\Big)\log\frac{\big(x_1-\frac h 2\big)^2+x_2^2}{\big(x_1+\frac h 2\big)^2+x_2^2}\\
&\,\phantom{-\frac{E}{1-\nu^2}\frac{s}{16\pi}\bigg(}\quad-hx_1\log\bigg(\Big(\Big(x_1-\frac h 2\Big)^2+x_2^2\Big)\Big(\Big(x_1+\frac h 2\Big)^2+x_2^2\Big)\bigg)\bigg)\,;\\
\partial_{x_1}{\bar v_h}(x)=&\, -\frac{E}{1-\nu^2}\frac{s}{16\pi} \bigg(2\Big(x_1-\frac h 2\Big)\log\Big(\Big(x_1-\frac h 2\Big)^2+x_2^2\Big)\\
&\,\phantom{\frac{E}{1-\nu^2}\frac{s}{16\pi} \bigg(}\quad-2\Big(x_1+\frac h 2\Big)\log\Big(\Big(x_1+\frac h 2\Big)^2+x_2^2\Big)-2h\bigg)\,;\\
\partial_{x_2}{\bar v_h}(x)=&\, -\frac{E}{1-\nu^2}\frac{s}{16\pi} 2x_2\log\frac{\Big(x_1-\frac h 2\Big)^2+x_2^2}{\Big(x_1+\frac h 2\Big)^2+x_2^2}\,;\\
\partial^2_{x_1^2}\bar v_h(x)=&\, -\frac{E}{1-\nu^2}\frac{s}{16\pi}\Bigg(2\log\frac{\big(x_1-\frac{h}{2}\big)^2+x_2^2}{\big(x_1+\frac{h}{2}\big)^2+x_2^2}\\
&\,\phantom{-\frac{E}{1-\nu^2}\frac{s}{16\pi}\Bigg(}\quad+4\bigg(\frac{\big(x_1-\frac{h}{2}\big)^2}{\big(x_1-\frac h 2\big)^2+x_2^2}-\frac{\big(x_1+\frac{h}{2}\big)^2}{\big(x_1+\frac{h}{2}\big)^2+x_2^2}\bigg)\Bigg)\,;\\
\partial^2_{x_2^2}\bar v_h(x)=&\,-\frac{E}{1-\nu^2}\frac{s}{16\pi}\Bigg(2\log\frac{\big(x_1-\frac{h}{2}\big)^2+x_2^2}{\big(x_1+\frac{h}{2}\big)^2+x_2^2}\\
&\,\phantom{-\frac{E}{1-\nu^2}\frac{s}{16\pi}\Bigg(}\quad+4x_2^2\bigg(\frac{1}{\big(x_1-\frac{h}{2})^2+x_2^2}-\frac{1}{\big(x_1+\frac{h}{2})^2+x_2^2}\bigg)\Bigg)\,;\\
\partial^2_{x_1\,x_2}\bar v_h(x)=&\,-\frac{E}{1-\nu^2}\frac{s}{16\pi}\Bigg(4x_2\bigg(\frac{x_1-\frac{h}{2}}{\big(x_1-\frac{h}{2}\big)^2+x_2^2}-\frac{x_1+\frac{h}{2}}{\big(x_1+\frac{h}{2}\big)^2+x_2^2}\bigg)\Bigg)\,.
\end{split}
\end{equation}
Moreover,
\begin{equation}\label{simplif}
\begin{aligned}
\log\frac{\big(x_1-\frac h 2\big)^2+x_2^2}{\big(x_1+\frac h 2\big)^2+x_2^2}=&\log\bigg(1-\frac{2x_1h}{\big(x_1+\frac h 2\big)^2+x_2^2}\bigg)\,,\\
\frac{\big(x_1-\frac{h}{2})^2}{\big(x_1-\frac{h}{2}\big)^2+x_2^2}-\frac{\big(x_1+\frac{h}{2}\big)^2}{\big(x_1+\frac{h}{2}\big)^2+x_2^2}=&\frac{-2x_2^2x_1h}{\Big(\big(x_1-\frac{h}{2}\big)^2+x_2^2\Big)\Big(\big(x_1+\frac{h}{2}\big)^2+x_2^2\Big)}\,,\\
x_2^2\bigg(\frac{1}{\big(x_1-\frac{h}{2}\big)^2+x_2^2}-\frac{1}{\big(x_1+\frac{h}{2}\big)^2+x_2^2}\bigg)=&\frac{2x_2^2x_1h}{\Big(\big(x_1-\frac{h}{2}\big)^2+x_2^2\big)\Big(\big(x_1+\frac{h}{2}\big)^2+x_2^2\Big)}\,,\\
\frac{x_1-\frac{h}{2}}{\big(x_1-\frac{h}{2}\big)^2+x_2^2}-\frac{x_1+\frac{h}{2}}{\big(x_1+\frac{h}{2}\big)^2+x_2^2}=&\frac{\big(|x|^2+\frac{h^2}{4}\big)h}{\Big(\big(x_1-\frac{h}{2}\big)^2+x_2^2\Big)\Big(\big(x_1+\frac{h}{2}\big)^2+x_2^2\Big)}\,.
\end{aligned}
\end{equation}
Finally, we set 
$$\bar v'(x)\coloneqq \frac{E}{1-\nu^2}\frac{s}{8\pi} (x_1\log|x|^2+x_1)\,, \quad\text{for every $x\in\R^2\setminus\{0\}$\,,}$$ 
and we observe that $\bar v'\in C^{\infty}(\R^2\setminus\{0\})$\,.
By \eqref{perdislo} and \eqref{simplif}, it is easy to check that $h^{-1}\bar v_h$ (resp., $h^{-1} \partial_{x_j}\bar v_h$\,, for $j=1,2$\,, and $h^{-1}\partial^{2}_{x_jx_k}\bar v_h$\,, for $j,k=1,2$) converges to $\bar v'$ (resp., $\partial_{x_j}\bar v'$\,, for $j=1,2$\,, and $\partial^{2}_{x_jx_k}\bar v'$\,, for $j,k=1,2$) uniformly in $A_{r,R}(0)$ as $h\to0$\,.
In particular, $h^{-2}\|\bar v_h\|^2_{H^2(A_{r,R}(0))}\to\|\bar v'\|^2_{H^2(A_{r,R}(0))}\eqqcolon C(r,R) s^2$\,, i.e.,  \eqref{f:edgetrue}.
}
\end{proof}
The next lemma is devoted to the asymptotic behavior of the elastic energy of $\bar{v}_h$ as $h\to 0$\,.
Its proof is contained in Appendix \ref{appendixprooflemma}.
\begin{lemma}\label{lemma:energyvbarh}
For every $R>0$
\begin{equation}\label{energyvbarh}
\lim_{h\to 0}\frac{1}{h^2\log \frac{R}{h}}\G(\bar v_h;B_R(0))=\frac{E}{1-\nu^2}\frac{s^2}{8\pi}\,.
\end{equation}
\end{lemma}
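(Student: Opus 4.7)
The plan is to reduce the asymptotics to a statement about a \emph{reference dipole at unit distance} via a rescaling, and then to compute its energy growth explicitly through a far-field expansion.

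\textbf{Step 1: Rescaling.} I would introduce the reference function $W\coloneqq \bar v_1\in H^2_\loc(\R^2)$, namely \eqref{defvbarh} evaluated at $h=1$. Setting $e_1\coloneqq (1;0)$, the scaling identity $\log |hw|^2=\log|w|^2+\log h^2$ together with \eqref{fundamdiscl} yields
\begin{equation*}
\bar v(hw)=h^2\bar v(w)+\frac{E}{1-\nu^2}\,\frac{h^2|w|^2\log h^2}{16\pi}\qquad\text{for every }w\in\R^2\setminus\{0\}.
\end{equation*}
Applying this with $w=z\mp e_1/2$ and using $|z-e_1/2|^2-|z+e_1/2|^2=-2z_1$, I obtain
\begin{equation*}
\bar v_h(hz)=h^2W(z)+s\,\frac{E}{1-\nu^2}\,\frac{h^2\log h^2}{8\pi}\,z_1.
\end{equation*}
The last term is affine in $z$, so $(\nabla^2\bar v_h)(hz)=\nabla^2 W(z)$. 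The change of variables $x=hz$ in \eqref{energyairy} then gives $\G(\bar v_h;B_R(0))=h^2\,\G(W;B_{R/h}(0))$, reducing \eqref{energyvbarh} to the claim
\begin{equation*}
\lim_{\rho\to+\infty}\frac{\G(W;B_\rho(0))}{\log\rho}=\frac{E}{1-\nu^2}\,\frac{s^2}{8\pi}.
\end{equation*}

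\textbf{Step 2: Far-field expansion.} Writing $\nabla^2 W(z)=-s\bigl[\nabla^2\bar v(z-e_1/2)-\nabla^2\bar v(z+e_1/2)\bigr]$, the even-order terms in the antisymmetric difference cancel in the Taylor expansion around $z$, leaving
\begin{equation*}
\nabla^2 W(z)=s\,\partial_{x_1}\nabla^2\bar v(z)+R(z)=\nabla^2\bar v'(z)+R(z),
\end{equation*}
with $\bar v'$ as in \eqref{vbarprime} and $R$ a Taylor remainder controlled by third-order derivatives of $\nabla^2\bar v$. From the homogeneity structure of $\bar v$ one has $|\partial^k\bar v(z)|=O(|z|^{2-k})$ (with logarithmic corrections for $k\le 2$), hence $|R(z)|=O(|z|^{-3})$ as $|z|\to\infty$. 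An analogous expansion holds for $\Delta W$. Since \eqref{perdislo} gives $|\nabla^2\bar v'(z)|=O(|z|^{-1})$, the cross-term and squared remainder in $|\nabla^2W|^2=|\nabla^2\bar v'|^2+2\,\nabla^2\bar v':R+|R|^2$ decay like $|z|^{-4}$ and $|z|^{-6}$ respectively and are integrable on $\R^2\setminus B_r(0)$ for any fixed $r>1$; the same holds for $|\Delta W|^2$.

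\textbf{Step 3: Conclusion.} Pick $r>1$ large enough that $\{\pm e_1/2\}\subset B_r(0)$, so that $W\in H^2(B_r(0))$ and $\G(W;B_r(0))=O(1)$. A direct integration in polar coordinates from the explicit formulas in \eqref{perdislo} yields
\begin{equation*}
\int_{A_{r,\rho}(0)}|\nabla^2\bar v'|^2\,\ud z=\int_{A_{r,\rho}(0)}|\Delta\bar v'|^2\,\ud z=\frac{E^2}{(1-\nu^2)^2}\,\frac{s^2}{4\pi}\log\frac{\rho}{r}.
\end{equation*}
Combining with Step 2 and using $(1-\nu)(1+\nu)=1-\nu^2$, the definition \eqref{energyairy} of $\G$ gives
\begin{equation*}
\G(W;B_\rho(0))=\frac{E}{1-\nu^2}\,\frac{s^2}{8\pi}\log\rho+O(1)\qquad\text{as }\rho\to+\infty,
\end{equation*}
which, fed back into Step 1 with $\rho=R/h$, produces \eqref{energyvbarh}.

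\textbf{Expected obstacle.} The delicate point lies in Step 2: one must confirm that the dipole cancellation genuinely improves the decay of $\nabla^2 W$ by two orders with respect to each isolated disclination (from $O(\log|z|)$ down to $O(|z|^{-1})$), and that the higher-order Taylor remainder produces only a uniformly integrable contribution to the energy. Once this bookkeeping is carried out, the computation of the logarithmic prefactor reduces to the same explicit integrals for the edge-dislocation Airy function already appearing in the proof of Lemma \ref{lemma:edgetrue}.
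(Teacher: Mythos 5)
Your argument is correct, and it takes a genuinely different route from the paper. The paper's proof (Appendix \ref{appendixprooflemma}) works directly at scale $h$: it computes the second derivatives of $\bar v_h$ explicitly, packages $|\nabla^2\bar v_h|^2$ and $|\Delta\bar v_h|^2$ into three integral functionals $\F^1_h,\F^2_h,\F^3_h$, estimates each on $A_{h,R}(0)$ by a dyadic decomposition into annuli $A_{2^{n-1}h,2^n h}(0)$ rescaled to $A_{1,2}(0)$ (plus a polar-coordinate computation for $\F^2_h$), and then shows separately that the core $B_h(0)$ contributes negligibly. You instead rescale once and for all: the identity $\bar v(hw)=h^2\bar v(w)+c\,h^2|w|^2\log h^2$ makes the $\log h$ correction affine after the dipole difference, so $\G(\bar v_h;B_R(0))=h^2\,\G(\bar v_1;B_{R/h}(0))$ exactly, and the lemma becomes a statement about the logarithmic growth of the energy of the fixed unit dipole on large balls. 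The far-field step is sound: since the odd difference kills the even Taylor terms, $\nabla^2\bar v_1=\nabla^2\bar v'+R$ with $|R(z)|\lesssim|z|^{-3}$ for $|z|\ge r>1$ (fifth derivatives of $\bar v$ decay like $|z|^{-3}$ and the segment joining $z\mp e_1/2$ stays away from the singularities), so the cross term and $|R|^2$ are integrable at infinity, and the prefactor comes from the same explicit integrals of \eqref{perdislo} used in Lemma \ref{lemma:edgetrue} — indeed both $\int_{A_{r,\rho}(0)}|\nabla^2\bar v'|^2\,\ud z$ and $\int_{A_{r,\rho}(0)}|\Delta\bar v'|^2\,\ud z$ equal $\frac{E^2}{(1-\nu^2)^2}\frac{s^2}{4\pi}\log\frac{\rho}{r}$, giving $\G(\bar v_1;B_\rho(0))=\frac{E}{1-\nu^2}\frac{s^2}{8\pi}\log\rho+\mathrm{O}(1)$ via $(1+\nu)(1-\nu)=1-\nu^2$. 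What your route buys is brevity and structure: the near-core contribution is automatically $\mathrm{O}(1)$ (it is the fixed finite energy $\G(\bar v_1;B_r(0))$, since $\bar v\in H^2_{\loc}$), so you avoid both the dyadic bookkeeping and the separate estimate corresponding to \eqref{palletta}, and the mechanism behind the $h^2\log\frac{R}{h}$ scaling (scaling plus dipole cancellation upgrading the decay of the Hessian from $\log|z|$ to $|z|^{-1}$) is made explicit; the paper's computation is more elementary in that it never needs the Taylor-remainder decay, only algebraic simplification of explicit expressions.
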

The next proposition shows that the same behavior in \eqref{energyvbarh} persists when replacing $\bar v_h$ with the minimizer $v_h$ of $\I^{\theta_h}(\cdot;B_R(0))$ in $H^2_0(B_R(0))$\,, for $\theta_h$ given by \eqref{thetah}.

\begin{proposition}\label{p:3.3}
For every $0<h<R$\,, let 
 $v_h$ be the minimizer of $\I^{\theta_h}(\cdot;B_R(0))$ in $H^2_0(B_R(0))$\,.
Then, 
\begin{equation}\label{plim}
\lim_{h\to 0}\frac{1}{h^2|\log h|}\G(v_h;B_R(0))=\frac{E}{1-\nu^2}\frac{s^2}{8\pi}
\end{equation}
and
\begin{equation}\label{slim}
\lim_{h\to 0}\frac{1}{h^2|\log h|}\I^{\theta_h}(v_h;B_R(0))=-\frac{E}{1-\nu^2}\frac{s^2}{8\pi}\,.
\end{equation}
\end{proposition}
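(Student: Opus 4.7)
Since $v_h$ minimizes $\I^{\theta_h}(\cdot;B_R(0))$ over $H^2_0(B_R(0))$, its Euler--Lagrange equation reads $\langle v_h,\varphi\rangle_{\G_{B_R(0)}}=-\langle\theta_h,\varphi\rangle$ for every $\varphi\in H^2_0(B_R(0))$, the pairing on the right being meaningful because $H^2_0(B_R(0))\hookrightarrow C^0(\overline{B_R(0)})$ in dimension two. Testing with the admissible choice $\varphi=v_h$ yields the energy identity
\[
\I^{\theta_h}(v_h;B_R(0))=\G(v_h;B_R(0))+\langle\theta_h,v_h\rangle=-\G(v_h;B_R(0)),
\]
so \eqref{slim} follows from \eqref{plim} and it suffices to establish the latter.

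To prove \eqref{plim}, I decompose $v_h=\bar v_h+w_h$ on $B_R(0)$, with $\bar v_h$ as in \eqref{defvbarh}. By \eqref{bilaplvbarh} and the Euler--Lagrange equation just derived, both $v_h$ and $\bar v_h|_{B_R(0)}$ solve $\frac{1-\nu^2}{E}\Delta^2 u=-\theta_h$ distributionally in $B_R(0)$; hence $w_h\in H^2(B_R(0))$ solves the homogeneous biharmonic equation $\Delta^2 w_h=0$ in $B_R(0)$, subject to the inhomogeneous Dirichlet conditions $w_h=-\bar v_h$ and $\partial_n w_h=-\partial_n\bar v_h$ on $\partial B_R(0)$ in the sense of traces.

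The key quantitative step is the estimate $\|w_h\|_{H^2(B_R(0))}=O(h)$. For $h<R/2$ the function $\bar v_h$ is smooth in the annulus $A_{R/2,R}(0)$, so by the trace theorem and Lemma~\ref{lemma:edgetrue},
\[
\|\bar v_h\|_{H^{3/2}(\partial B_R(0))}+\|\partial_n\bar v_h\|_{H^{1/2}(\partial B_R(0))}\le C\|\bar v_h\|_{H^2(A_{R/2,R}(0))}=O(h).
\]
Standard well-posedness and $H^2$-regularity for the biharmonic Dirichlet problem on a smooth domain (see, e.g., \cite{Gazzola09}) then give
\[
\|w_h\|_{H^2(B_R(0))}\le C\bigl(\|\bar v_h\|_{H^{3/2}(\partial B_R(0))}+\|\partial_n\bar v_h\|_{H^{1/2}(\partial B_R(0))}\bigr)\le Ch,
\]
so in particular $\G(w_h;B_R(0))\le Ch^2$.

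Finally, by bilinearity of $\langle\cdot,\cdot\rangle_{\G_{B_R(0)}}$ (see \eqref{semiprod}) and the Cauchy--Schwarz inequality,
\[
\bigl|\G(v_h;B_R(0))-\G(\bar v_h;B_R(0))\bigr|\le 2\G(\bar v_h;B_R(0))^{1/2}\G(w_h;B_R(0))^{1/2}+\G(w_h;B_R(0)).
\]
By Lemma~\ref{lemma:energyvbarh} the first term on the right is of order $h|\log h|^{1/2}\cdot h$ and the second of order $h^2$, so the whole right-hand side is $o(h^2|\log h|)$. Dividing by $h^2|\log h|$ and invoking Lemma~\ref{lemma:energyvbarh} once more yields \eqref{plim}, and \eqref{slim} follows from the identity of the first paragraph. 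The principal obstacle is the biharmonic regularity step: one must upgrade the $O(h)$ smallness of $\bar v_h$ on an annular collar (furnished by Lemma~\ref{lemma:edgetrue}) into an $H^2$-bound on the biharmonic corrector on the whole of $B_R(0)$, a step which relies on the smoothness of $\partial B_R(0)$ and on the classical well-posedness theory for the Lauricella--Dirichlet problem $\Delta^2 w=0$ with prescribed $H^{3/2}\times H^{1/2}$ boundary data.
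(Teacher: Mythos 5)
Your argument for \eqref{plim} is correct and follows the same strategy as the paper: the decomposition $v_h=\bar v_h+w_h$, the observation that $w_h$ is biharmonic in $B_R(0)$ with Dirichlet data $(-\bar v_h,-\partial_n\bar v_h)$, an $O(h)$ bound on $\|w_h\|_{H^2(B_R(0))}$ coming from Lemma~\ref{lemma:edgetrue}, and then Cauchy--Schwarz for the bilinear form \eqref{semiprod} to reduce everything to Lemma~\ref{lemma:energyvbarh}. The only difference there is how you control the corrector: you pass through the trace theorem ($H^2$ on an annular collar gives $H^{3/2}\times H^{1/2}$ boundary data) and the variational well-posedness of the biharmonic Dirichlet problem, whereas the paper invokes the a priori estimate of \cite[Theorem~2.16]{Gazzola09} with $C^2(\partial B_R(0))$ boundary data (see \eqref{estiesti}); both routes are legitimate, yours being slightly more self-contained (a lifting of the boundary data plus Lax--Milgram in $H^2_0$ suffices), the paper's avoiding any discussion of fractional trace spaces. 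Where you genuinely diverge is \eqref{slim}: instead of computing $\langle\theta_h,v_h\rangle=\langle\theta_h,\bar v_h\rangle+s\big(w_h(y^{h,+})-w_h(y^{h,-})\big)$ explicitly as the paper does (an exact evaluation of $\langle\theta_h,\bar v_h\rangle$ plus an interior estimate on $\nabla w_h$), you use the general identity for a quadratic-plus-linear functional at its minimizer, $\I^{\theta_h}(v_h)=-\G(v_h)$, so that \eqref{slim} is an immediate corollary of \eqref{plim}. This is a cleaner and shorter derivation of the second limit, at the price of being less explicit about the size of the charge term itself.

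One slip you should fix: the Euler--Lagrange equation is $2\langle v_h,\varphi\rangle_{\G_{B_R(0)}}+\langle\theta_h,\varphi\rangle=0$ for all $\varphi\in H^2_0(B_R(0))$, not $\langle v_h,\varphi\rangle_{\G_{B_R(0)}}=-\langle\theta_h,\varphi\rangle$ as you wrote (the quadratic part contributes a factor $2$ under first variation). From your version, testing with $\varphi=v_h$ would give $\I^{\theta_h}(v_h)=0$, contradicting the identity you then assert; from the correct version one gets $\langle\theta_h,v_h\rangle=-2\G(v_h)$ and hence indeed $\I^{\theta_h}(v_h)=-\G(v_h)$, which is what your proof actually needs. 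With that factor restored, the argument is complete.
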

\begin{proof}
We start by noticing that, for every $0<h<R$\,, the minimizer $v_h$ of $\I^{\theta_h}(\cdot;B_R(0))$ in $H^2_0(B_R(0))$ is unique by Lemma~\ref{propItheta}. 
Let $w_h\in H^2(B_R(0))$ be defined by the formula $w_h\coloneqq v_h- \bar v_h\res B_R(0)$\,,
where $\bar v_h$ is defined in \eqref{defvbarh}.
Then, by \eqref{bilaplvbarh}, we have that $w_h$ is the unique solution to
\begin{equation}\label{cauchyw}
\begin{cases}
\Delta^2 w=0&\text{in $B_R(0)$}\\
w=-\bar v_h&\text{on $\partial B_R(0)$}\\
\partial_n w=-\partial_n \bar v_h&\text{on $\partial B_R(0)$\,.}
\end{cases}
\end{equation}
By \cite[Theorem 2.16]{Gazzola09}, we have that there exists a constant $C=C(R)>0$ such that
\begin{equation}\label{estiesti}
\|w_h\|_{H^2(B_R(0))}\le C\|\bar v_h\|_{{H^{\frac 3 2}(\partial{B_R}(0))}}\le C\|\bar v_h\|_{H^2(A_{r,R}(0))}\,,
\end{equation}
where $0<r<R$ is fixed.

By \eqref{estiesti} and Lemma \ref{lemma:edgetrue} for $h$ small enough we get
\begin{equation}\label{estiwh}
\|w_h\|^2_{H^2(B_R(0))}\le C\|\bar v_h\|^2_{H^2(A_{r,R}(0))}\le {C(r,R)} s^2h^2\,,
\end{equation}
which, together with Lemma \ref{lemma:energyvbarh}, recalling the definition of $\langle \cdot;\cdot\rangle_{\G_{B_R(0)}}$ in \eqref{semiprod}, yields
\begin{equation*}
\begin{aligned}
\lim_{h\to 0}\frac{1}{h^2\log \frac{R}{h}}\G(v_h; B_R(0))=&\lim_{h\to 0}\frac{1}{h^2\log \frac{R}{h}}\G(\bar v_h;B_R(0))+\lim_{h\to 0}\frac{1}{h^2\log \frac{R}{h}}\G(w_h; B_R(0))\\
&\qquad{+}2s\lim_{h\to 0}\frac{1}{h^2\log \frac{R}{h}}\langle \bar v_h;w_h\rangle_{\G_{B_R(0)}}
=\frac{E}{1-\nu^2}\frac{s^2}{8\pi}\,,
\end{aligned}
\end{equation*}
i.e.,  \eqref{plim}. Finally, since
\begin{equation*}
\langle \theta_h,v_h\rangle=\langle \theta_h,\bar v_h\rangle+\langle \theta_h,w_h\rangle=-\frac{E}{1-\nu^2}\frac{s^2}{4\pi}h^2|\log h|+w_h(y^{h,+})-w_h(y^{h,-})\,,
\end{equation*}
{and using Morrey inequality  and \eqref{estiwh} to deduce that
$$
|w_h(y^{h,\pm})|\le C(R)\|w_h\|_{H^2(B_R(0))}\le C(r,R)sh\,,
$$
}
we get
\begin{equation*}
\lim_{h\to 0}\frac{1}{h^2\log\frac{R}{h}}\langle \theta_h,v_h\rangle=-\frac{E}{1-\nu^2}\frac{s^2}{4\pi}+\lim_{h\to 0} {\frac{sC(r,R)}{|\log h|}}=-\frac{E}{1-\nu^2}\frac{s^2}{4\pi}\,,
\end{equation*}
which, added to \eqref{plim}, yields \eqref{slim}.
\end{proof}
\subsection{Core-radius approach for a dipole of disclinations}\label{sec:hBV}
We discuss the convergence of a wedge disclination dipole to a planar edge dislocation.
We remind that the kinematic equivalence of a dipole of wedge disclinations with an edge dislocation has been first pointed out in \cite{Eshelby66} with a geometric construction in a  continuum   (see 
\cite{YL2010} for a construction on the hexagonal lattice).

Let $s>0$\,, $R>0$\,, $h\in(0,R)$, and let $\theta_h\coloneqq s\de_{(\frac h 2;0)}-s\de_{(-\frac h 2;0)}$\,.
Moreover, let $v_h\in H^2(B_R(0))$ satisfy 
\begin{equation}\label{risca}
\begin{cases}
\Delta^2 v_h=-\theta_h&\textrm{in }B_R(0)\\
v_h=\partial_nv_h=0&\textrm{on }\partial B_R(0)\,.
\end{cases}
\end{equation}
Then, since $\frac{\theta_h}h\to -s\partial_{x_1}\de_{0}$ as $h\to 0$\,, we expect that, formally, $\frac{v_h}{h}\to v$, where $v$ satisfies  
\begin{equation}\label{riscaaa}
\begin{cases}
\Delta^2 v= s\partial_{x_1}\de_{0}&\textrm{in }B_R(0)\\
v=\partial_nv=0&\textrm{on }\partial B_R(0)\,,
\end{cases}
\end{equation}
namely, $v$ is the Airy function associated with the elastic stress field of an edge dislocation centered at the origin and with Burgers vector $b=se_2$\,, see \eqref{incfinal2}.
Notice that the resulting Burgers vector is orthogonal to the direction of the disclination dipole $d$ (directed from the negative to the positive angle), more precisely we can write $\frac{b}{|b|}=\frac{d^\perp}{|d|}$ (see \cite{Eshelby66} and also \cite[formula (7.17)]{dW3} and \cite[formula (7)]{ZA2018}).

The convergence of the right-hand side of \eqref{risca} to the right-hand side of \eqref{riscaaa} represents   the kinematic equivalence between an edge dislocation and a wedge disclination dipole, obtained in the limit as the dipole distance~$h$ tends to zero. 
We now focus our attention on the investigation of  the energetic equivalence of these defects, which we pursue by analyzing rigorously the convergence of the solutions of \eqref{risca} to those of \eqref{riscaaa}.

As this analysis entails singular energies, we introduce regularized functionals parameterized by $0<\varepsilon<R$, representing the core radius. 
To this purpose, we define 
\begin{equation}\label{nuovoA1}
\begin{aligned}
\Bb_{\ce,R}\coloneqq\big\{w\in H^2_0(B_R(0)): \text{$w=a$ in $B_\ce(0)$ for some affine function~$a$}\big\}
\end{aligned}
\end{equation}
and, recalling \eqref{defI}, we introduce, for $h<\ce$\,, the functional 
\blu{$\I^{\theta_h}_{h,\ce}\colon \Bb_{\ce,R}\to\R$} defined by
\begin{equation}\label{defJheh}
\blu{\I^{\theta_h}_{h,\ce}(v)\coloneqq\G(v;B_R(0))+\frac{s}{2\pi(\ce-h)}\int_{\partial B_{\ce-h}(0)}\bigg[v\Big(x+\frac h 2e_1\Big)-v\Big(x-\frac h 2 e_1\Big)\bigg]\,\ud\Huno(x)}\,,
\end{equation}
associated with the pair $\theta_h$ of disclinations of opposite angles $\pm s$ 
placed at $\pm(\frac h 2,0)$, respectively.
We identify the relevant rescaling for the Airy stress function \blu{$v$}\,, parametrized by the dipole distance~$h$\,,
and corresponding to the energy regime of interest.
We stress that the energy scalings are dictated by the 
scaling of \blu{$v$} and not from \emph{a priori} assumptions.
Consequently, we assume $v=hw$ (with $w=\mathrm{O}(1)$) and  write
\begin{equation}\label{2206191431}
\blu{\I^{\theta_h}_{h,\ce}(hw)}=\G(hw;B_R(0))+\frac{s}{2\pi(\ce-h)}\int_{\partial B_{\ce-h}(0)}\bigg[hw\Big(x+\frac h 2e_1\Big)-hw\Big(x-\frac h 2 e_1\Big)\bigg]\,\ud\Huno(x)\,.
\end{equation}
%
%
%
%
%
%
%
It follows that the regularized energy of a disclination dipole of angles $\pm s$
is of order $\mathrm{O}(h^2)$\,. 
In order to isolate the first non-zero contribution 
in the limit as $h\to 0$, we divide \eqref{2206191431} by~$h^2$
and, \blu{in view of the homogeneity of degree $2$ of the elastic term, we get 
}
\begin{equation}\label{defJ}
\begin{aligned}
\blu{\I^{\theta_h/h}_{h,\ce}(w)}\equiv&\, \frac{1}{h^2}\I^{\theta_h/h}_{h,\ce}(hw)\\
=&\, \G(w;B_R(0))+\frac{s}{2\pi(\ce-h)}\int_{\partial B_{\ce-h}(0)}\frac{w(x+\frac h 2e_1)-w(x-\frac h 2 e_1)}{h}\,\ud\Huno(x)\,.
\end{aligned}
\end{equation}
\blu{Setting $\alpha\coloneqq se_2\de_0$\,, }
we show that the minimizers of \blu{$\I^{\theta_h/h}_{h,\ce}$} in $\Bb_{\ce,R}$ converge, as $h\to 0$\,, to the minimizers in  $\Bb_{\ce,R}$ of the functional \blu{$\I^{\alpha}_{0,\ce}\colon\Bb_{\ce,R}\to\R$} defined by
\begin{equation}\label{defJ0}
\blu{\I^{\alpha}_{0,\ce}}(w) \coloneqq\G(w;B_{R}(0))+\frac{s}{2\pi\ce}\int_{\partial B_\ce(0)}\partial_{x_1}w\,\ud\Huno\,.
\end{equation}
Notice that, by the very definition of $\Bb_{\ce,R}$ in \eqref{nuovoA1},
\begin{equation}\label{defJ0eff}
\blu{\I^{\alpha}_{0,\ce}}(w) = \G(w;A_{\ce,R}(0))+\frac{s}{2\pi\ce}\int_{\partial B_\ce(0)}\partial_{x_1}w\,\ud\Huno\,.
\end{equation}


We start by showing existence and uniqueness of the minimizers of \blu{$\I^{\theta_h/h}_{h,\ce}$ and $\I^\alpha_{0,\ce}$} in $\Bb_{\ce,R}$\,.
\begin{lemma}\label{existmin}
\blu{Let $s\in\R\setminus\{0\}$\,, $\theta_h\coloneqq s\de_{(\frac h 2;0)}-s\de_{(-\frac h 2;0)}$\,, and $\alpha\coloneqq se_2\de_0$\,.
For every $0< h <\ce<R$ there exists a unique minimizer of \blu{$\I^{\theta_h/h}_{h,\ce}$} in $\Bb_{\ce,R}$\,. Moreover, there exists a unique minimizer of  $\I^\alpha_{0,\ce}$ in $\Bb_{\ce,R}$\,.}
\end{lemma}
\begin{proof}
The proof relies on the direct method in the Calculus of Variations. We preliminarily notice that the uniqueness of the minimizers follows by the strict convexity (see \eqref{strictconv}) of {$\I^{\theta_h/h}_{h,\ce}$\,, for $h> 0$\,, and of $\I^{\alpha}_{0,\ce}$\,, for $h=0$}\,.
{For every $h> 0$ } let $\{W_{h,\ce,j}\}_{j\in\N}$ be a minimizing sequence for  $\blu{\I^{\theta_h/h}_{h,\ce}}$ in $\Bb_{\ce,R}$ \blu{and let $\{W_{0,\ce,j}\}_{j\in\N}$ be a minimizing sequence for  $\blu{\I^{\alpha}_{0,\ce}}$ in $\Bb_{\ce,R}$ }\,.
We first discuss the case $h>0$\,.
Since $W_{h,\ce,j}$ is affine in $B_\ce(0)$ for any $j\in\N$\,, for any $x\in\partial B_{\ce-h}(0)$ we have that  
\begin{equation}\label{stimasuder}
\begin{aligned}
\bigg|\frac{W_{h,\ce,j}\big(x+\frac{h}{2}e_1\big)-W_{h,\ce,j}\big(x-\frac{h}{2}e_1\big)}{h}\bigg|=&\,|\partial_{x_1}W_{h,\ce,j}(x)|\le \|\partial_{x_1}W_{h,\ce,j}\|_{L^\infty(B_\ce(0))}\\
\le&\, \frac1{\sqrt{\pi}\ce}\|W_{h,\ce,j}\|_{H^2(B_R(0))}\,.
\end{aligned}
\end{equation}
Hence, since the zero function $w= 0$ belongs to $\Bb_{\ce,R}$\,, by using Friedrich's inequality in $H^2_0(B_R(0))$\,, we get, for $j$ large enough,
\begin{equation}\label{proofi}
\begin{aligned}
0= \blu{\I^{\theta_h/h}_{h,\ce}}(0)\ge&\, \blu{\I^{\theta_h/h}_{h,\ce}}(W_{h,\ce,j}) \\
\ge&\, \frac{1}{2}\frac{1-\nu^2}{E}\min\{1-2\nu,1\}\|\nabla^2W_{h,\ce,j} \|^2_{L^2(B_R(0);\R^{2\times 2})}-\frac{s}{\sqrt{\pi}\ce}\|W_{h,\ce,j}\|_{H^2(B_R(0))}\\
\ge&\, C\|W_{h,\ce,j}\|^2_{H^2(B_R(0))}-\frac{s}{\sqrt{\pi}\ce}\|W_{h,\ce,j}\|_{H^2(B_R(0))}\,,
\end{aligned}
\end{equation}
for some constant $C>0$ depending only on $R$ (other than on $E$ and $\nu$)\,.
By \eqref{proofi}, we deduce that $\|W_{h,\ce,j}\|^2_{H^2(B_R(0))}$ is uniformly bounded. 
It follows that, up to a subsequence, $W_{h,\ce,j}\weakly W_{h,\ce}$ (as $j\to \infty$) in $H^2(B_R(0))$ for some function $W_{h,\ce}\in H^2_0(B_R(0))$ that is affine in $B_{\ce}(0)$\,. 
By the lower semicontinuity of $\blu{\I^{\theta_h/h}_{h,\ce}}$ with respect to the weak $H^2$-convergence,
we get that $W_{h,\ce}$ is a minimizer of $\blu{\I^{\theta_h/h}_{h,\ce}}$ in $\Bb_{\ce,R}$\,.
\blu{Analogously, since the last inequality in \eqref{stimasuder} holds true also for $h=0$\,, by arguing as above we have that, up to a subsequence,  $W_{0,\ce,j}\weakly W_{0,\ce}$ (as $j\to \infty$) in $H^2(B_R(0))$\,, where $W_{0,\ce}$ is the unique minimizer of $\I^{\alpha}_{0,\ce}$ in $\Bb_{\ce,R}$\,.}
\end{proof}
We are now in a position to prove the convergence of the minimizers and of the minimal values of $\blu{\I^{\theta_h/h}_{h,\ce}}$ to $\blu{\I^\alpha_{0,\ce}}$ as $h\to 0$\,.
\begin{proposition}\label{convhtozero}
Let $s\in\R\setminus\{0\}$\,.
Let $0<\ce<R$ and, for every $0<h<\ce$\,, let $\blu{W^{{\theta_h}/h}_{h,\ce}}$ be the minimizer of $\blu{\I^{\theta_h/h}_{h,\ce}}$ in $\Bb_{\ce,R}$\,. Then, as $h\to 0$\,, \blu{$W^{\theta_h/h}_{h,\ce}\to W^{\alpha}_{0,\ce}$} strongly in $H^2(B_R(0))$\,, where \blu{$W^\alpha_{0,\ce}$} is the minimizer of $\blu{\I^{\alpha}_{0,\ce}}$ in $\Bb_{\ce,R}$\,.
Moreover, $\blu{\I^{\theta_h/h}_{h,\ce}(W^{\theta_h/h}_{h,\ce})\to \I^{\alpha}_{0,\ce}(W^\alpha_{0,\ce})}$ as $h\to 0$\,.
\end{proposition}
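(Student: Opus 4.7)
My plan is to exploit the affine constraint built into $\Bb_{\ce,R}$ and prove something actually stronger than $H^2$-convergence, namely that $\Jcal^s_{h,\ce}$ and $\Jcal^s_{0,\ce}$ coincide on $\Bb_{\ce,R}$ for every $h\in(0,\ce)$, so that the two minimizers are literally the same function and both conclusions become trivial.

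The key computation goes as follows. For $x\in\partial B_{\ce-h}(0)$ the shifted points satisfy $|x\pm\tfrac{h}{2}e_1|\le|x|+\tfrac{h}{2}=\ce-\tfrac{h}{2}<\ce$ and hence lie in $B_\ce(0)$, where every $w\in\Bb_{\ce,R}$ agrees with some affine function $y\mapsto p\cdot y+q$ with $p=(p_1,p_2)\in\R^2$ and $q\in\R$. Consequently
\begin{equation*}
\frac{w(x+\tfrac{h}{2}e_1)-w(x-\tfrac{h}{2}e_1)}{h}=p_1
\end{equation*}
is constant in $x\in\partial B_{\ce-h}(0)$, and the prefactor $s/(2\pi(\ce-h))$ times the integral of this constant over $\partial B_{\ce-h}(0)$ equals $sp_1$. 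For the linear part of $\Jcal^s_{0,\ce}$ one observes that, since $w\in H^2(B_R(0))$, the gradient $\nabla w\in H^1(B_R(0);\R^2)$ has a well-defined $H^{1/2}$-trace on $\partial B_\ce(0)$ without any jump; this trace coincides with the interior value $p$, so $\int_{\partial B_\ce(0)}\partial_{x_1}w\,\ud\Huno=2\pi\ce\,p_1$ and the linear part of $\Jcal^s_{0,\ce}(w)$ also evaluates to $sp_1$.

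Combining these two evaluations with the identical elastic term $\G(w;B_R(0))$ yields $\Jcal^s_{h,\ce}(w)=\Jcal^s_{0,\ce}(w)$ for every $w\in\Bb_{\ce,R}$ and every $h\in(0,\ce)$. Since by Lemma~\ref{existmin} each of the two functionals admits a unique minimizer in $\Bb_{\ce,R}$, this forces $W^s_{h,\ce}\equiv W^s_{0,\ce}$, from which the strong $H^2$-convergence and the convergence of minima both follow at once. I expect essentially no obstacle in this plan; the only mild subtlety is justifying that the trace of $\partial_{x_1}w$ on $\partial B_\ce(0)$ agrees with the constant value $p_1$ on the inside, which is immediate from $w\in H^2$ but deserves being spelled out to make the identity of the two linear contributions rigorous.
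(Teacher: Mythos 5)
Your argument is correct, and it is genuinely different from (and sharper than) the paper's proof: you observe that for $x\in\partial B_{\ce-h}(0)$ both shifted points $x\pm\frac h2 e_1$ lie in $\overline B_{\ce-\frac h2}(0)\subset B_\ce(0)$, where any $w\in\Bb_{\ce,R}$ coincides with an affine function, so the nonlocal term of $\Jcal^s_{h,\ce}(w)$ equals $s\,p_1$ exactly, and the same value is produced by the trace term of $\Jcal^s_{0,\ce}(w)$ (the trace of $\partial_{x_1}w\in H^1$ across the interior circle has no jump and equals the interior constant $p_1$). Hence $\Jcal^s_{h,\ce}\equiv\Jcal^s_{0,\ce}$ on $\Bb_{\ce,R}$ for every $0<h<\ce$, and by the uniqueness in Lemma~\ref{existmin} the minimizers coincide, $W^s_{h,\ce}\equiv W^s_{0,\ce}$, so both conclusions hold with equality rather than merely in the limit. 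The paper instead runs a soft direct-method argument: uniform bounds as in \eqref{proofi}, weak $H^2$-compactness, identification of the weak limit as an element of $\Bb_{\ce,R}$, passage to the limit in the linear terms (where, in \eqref{termlin}, the authors do record that the difference-quotient integral equals $c^s_{h,\ce,1}$ for each $h$, i.e.\ precisely your observation), lower semicontinuity of $\G$, comparison with $W^s_{0,\ce}$ as a competitor, upgrade to strong convergence via the norm equivalence of Remark~\ref{equinorm}, and Urysohn. What your route buys is brevity and a stronger conclusion (exact $h$-independence of minimizer and minimum, which also transfers verbatim to the multi-dipole setting of Remark~\ref{2202211829}, since there too the integration circles $\partial B_{\ce-h}(x^j)$ and the shifted points stay inside the balls where $w$ is affine); what the paper's compactness argument buys is robustness, since it would survive modifications of the model in which the nonlocal term samples $w$ outside the core or the core constraint is not exactly affine. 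The only point you rightly flag — that the boundary integral in $\Jcal^s_{0,\ce}$ sees the interior value $p_1$ — is indeed immediate from $w\in H^2(B_R(0))$ and should simply be stated as you did.
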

\begin{proof}
For every $0<h<\ce$ let $a_{h,\ce}(x)\coloneqq c_{h,\ce,0}+c_{h,\ce,1}x_1+c_{h,\ce,2}x_2$ with $c_{h,\ce,0},c_{h,\ce,1}, c_{h,\ce,2}\in\R$ be such that \blu{$W^{\theta_h/h}_{h,\ce}=a_{h,\ce}$} in $B_{\ce}(0)$\,.
Then, arguing as in \eqref{proofi}, we get
\begin{equation*}
0\ge \blu{\I^{\theta_h/h}_{h,\ce}(W^{\theta_h/h}_{h,\ce})\ge C\|W^{\theta_h/h}_{h,\ce}\|^2_{H^2(B_R(0))}-\frac{s}{\sqrt{\pi}\ce}\|W^{\theta_h/h}_{h,\ce}\|_{H^2(B_R(0))}}\,.
\end{equation*}
Therefore, up to a (not relabeled) subsequence, \blu{$W^{\theta_h/h}_{h,\ce}\weakly \bar W^{\alpha}_{0,\ce}$ in $H^2(B_R(0))$ for some $\bar W^{\alpha}_{0,\ce}\in H^2_0(B_R(0))$}\,. 
Moreover, since the functions \blu{$W^{\theta_h/h}_{h,\ce}$} are affine in $\overline{B}_\ce(0)$\,, also \blu{$\bar W^\alpha_{0,\ce}$} is, and hence there exist $c_{0,\ce,0}, c_{0,\ce,1}, c_{0,\ce,2}\in\R$ such that $\bar W^\alpha_{0,\ce}(x)=c_{0,\ce,0}+ c_{0,\ce,1}x_1+ c_{0,\ce,2}x_2$ for every $x\in\overline{B}_\ce(0)$\,. It follows that $\bar W^\alpha_{0,\ce}\in\Bb_{\ce,R}$\,.

Now, since $W^{\theta_h/h}_{h,\ce}\to \bar W^\alpha_{0,\ce}$ in $H^1(B_R(0))$\,, we get that 
$c_{h,\ce,j}\to c_{0,\ce,j}$ as $h\to 0$\,, for every $j=1,2,3$\,,
which implies, in particular, that
\begin{equation}\label{termlin}
\begin{aligned}
&\, \lim_{h\to 0}\frac{1}{2\pi(\ce-h)}\int_{\partial B_{\ce-h}(0)}  \frac{W^{\theta_h/h}_{h,\ce}(x+\frac h 2 e_1)-W^{\theta_h/h}_{h,\ce}(x-\frac h 2 e_1)}{h}\,\ud\Huno(x)
=\lim_{h\to 0}c_{h,\ce,1} =c_{0,\ce,1}\\
= &\, \frac{1}{2\pi\ce}\int_{\partial B_\ce(0)}  \partial_{x_1}\bar W^s_{0,\ce}\,\ud\Huno\\
=&\, \lim_{h\to 0}\frac{1}{2\pi(\ce-h)}\int_{\partial B_{\ce-h}(0)} \!\!\!\! \frac{\bar W^\alpha_{0,\ce}(x+\frac h 2 e_1)-\bar W^\alpha_{0,\ce}(x-\frac h 2 e_1)}{h}\,\ud\Huno(x)\,.
 \end{aligned}
\end{equation}
Analogously,
\begin{equation}\label{termlin2}
\lim_{h\to 0}\frac{1}{2\pi(\ce-h)}\int_{\partial B_{\ce-h}(0)} \!\!\!\! \!\! \frac{W^\alpha_{0,\ce}(x+\frac h 2 e_1)-W^\alpha_{0,\ce}(x-\frac h 2 e_1)}{h}\,\ud\Huno(x)=\frac{1}{2\pi\ce}\int_{\partial B_\ce(0)} \!\! \partial_{x_1}W^\alpha_{0,\ce}\,\ud\Huno\,.
\end{equation}
By \eqref{termlin} and \eqref{termlin2}, using the lower semicontinuity of $\G$\,, and taking \blu[$W^\alpha_{0,\ce}$] as a competitor for $\blu{\I^{\theta_h/h}_{h,\ce}}$ in $\Bb_{\ce,R}$\,, we get
\begin{equation*}
\blu{\I^{\alpha}_{0,\ce}(W^{\alpha}_{0,\ce})\le\I^{\alpha}_{0,\ce}(\bar W^\alpha_{0,\ce})\le\liminf_{h\to 0}\I^{\theta_h/h}_{h,\ce}(W^{\theta_h/h}_{h,\ce})\le\lim_{h\to 0}\I^{\theta_h/h}_{h,\ce}(W^{\alpha}_{0,\ce})=\blu{\I^{\alpha}_{0,\ce}}(W^\alpha_{0,\ce})}\,,
\end{equation*}
so that all the inequalities above are in fact equalities. In particular, 
\begin{equation}\label{20220427}
\blu{\I^{\alpha}_{0,\ce}(W^\alpha_{0,\ce})=\lim_{h\to 0}\I^{\theta_h/h}_{h,\ce}(W^{\theta_h/h}_{h,\ce})}
\end{equation}
and consequently
 $\bar W^\alpha_{0,\ce}$ is a minimizer of $\blu{\I^{\alpha}_{0,\ce}}$ in $\Bb_{\ce,R}$\,. 
 In view of Lemma \ref{existmin}, we deduce that $\bar W^\alpha_{0,\ce}=W^\alpha_{0,\ce}$\,, which, together with \eqref{termlin} and \eqref{20220427}, implies that $\G(W^{\theta_h/h}_{h,\ce};B_R(0))\to \G(W^\alpha_{0,\ce};B_R(0))$ as $h\to 0$\,. In view of Remark \ref{equinorm}, this implies that $W^{\theta_h/h}_{h,\ce}\to W^\alpha_{0,\ce}$ strongly in $H^2(B_R(0))$ as $h\to 0$.
 Finally, by the Urysohn property, we get that the whole family $\{W^{\theta_h/h}_{h,\ce}\}_h$ converges to $W^\alpha_{0,\ce}$ as $h\to 0$\,. 
\end{proof}
We conclude this section by determining the minimizer \blu{$W^\alpha_{0,\ce}$} of $\blu{\I^{\alpha}_{0,\ce}}$ in $\Bb_{\ce,R}$, for $\alpha=se_2\de_0$\,.
\begin{lemma}\label{lm:mindis}
Let $s\in\R\setminus\{0\}$\,. For every $0<\ce<R$ the function \blu{$W^{se_2\de_0}_{0,\ce}:B_R(0)\to\R$} defined by
\begin{equation}\label{2201181926}
W_{0,\ce}^{se_2\de_0}(x)\coloneqq \begin{cases}
\displaystyle \frac{s}{16\pi}\frac{E}{1-\nu^2}\Big(\alpha_\ce+\beta_\ce\frac{1}{|x|^2}+\gamma_\ce|x|^2+2\log|x|^2\Big)x_1&\text{if $x\in A_{\ce,R}(0)$}\\[2mm]
\displaystyle \frac{s}{16\pi}\frac{E}{1-\nu^2}\Big(\alpha_\ce+\frac{\beta_\ce}{\ce^2}+\ce^2\gamma_\ce+4\log\ce\Big)x_1&\text{if $x\in B_\ce(0)$\,,}
\end{cases}
\end{equation}
with 
\begin{equation}\label{abc}
\alpha_\ce\coloneqq2\frac{R^2-\ce^2}{R^2+\ce^2}-2\log R^2\,,\quad \beta_\ce\coloneqq 2\ce^2\frac{R^2}{R^2+\ce^2}\,,\quad \gamma_\ce\coloneqq-\frac{2}{R^2+\ce^2}\,,
\end{equation}
is the unique minimizer in $\Bb_{\ce,R}$ of the functional $\blu{\I^{se_2\de_0}_{0,\ce}}$ defined in \eqref{defJ0}.  
Moreover,
\begin{equation}\label{valmin}
\blu{\I^{se_2\de_0}_{0,\ce}(W^{se_2\de_0}_{0,\ce})}=-\frac{s^2}{8\pi}\frac{E}{1-\nu^2}\Big(
\log \frac{R}{\ce}-\frac{R^2-\ce^2}{R^2+\ce^2}\Big)\,.
\end{equation}
\end{lemma}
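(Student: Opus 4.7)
The plan is to reduce the constrained minimization to an ODE via symmetry, characterize the minimizer by the Euler--Lagrange conditions associated with \eqref{defJ0}, and then compute the minimum value through a standard identity for quadratic-plus-linear functionals. Existence and uniqueness of the minimizer $W^s_{0,\ce}$ are immediate from Lemma~\ref{existmin}. For any $w\in\Bb_{\ce,R}$ the Hessian vanishes in $B_\ce(0)$, so $\G(w;B_R(0))=\G(w;A_{\ce,R}(0))$, and $\partial_{x_1}w$ is constant on $\partial B_\ce(0)$, equal to the $x_1$-coefficient $c_1$ of the affine piece of $w$ in $B_\ce(0)$; hence the functional simplifies to $\Jcal^s_{0,\ce}(w)=\G(w;A_{\ce,R}(0))+sc_1$.

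I would then exploit the reflection symmetry $x_2\mapsto -x_2$ (which, together with uniqueness, forces $W^s_{0,\ce}$ to be even in $x_2$, so that $c_2=0$) together with the fact that the linear load $L(w)=sc_1$ couples only to the $\cos\theta$-mode of $w$. This suggests the ansatz $W^s_{0,\ce}(x)=f(|x|)\,x_1$; writing it as $g(r)\cos\theta$ with $g(r)=rf(r)$, the interior Euler--Lagrange equation $\Delta^2 W^s_{0,\ce}=0$ in $A_{\ce,R}(0)$ reduces to $L^2 g=0$ with $Lg=g''+g'/r-g/r^2$, whose general solution is $g(r)=C_1 r+C_2/r+C_3 r^3+C_4 r\log r$, equivalently $f(r)=A+B/r^2+Cr^2+D\log r^2$. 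The four constants are pinned down by: (i) $f(R)=0$ and $f'(R)=0$, from the clamped condition $w=\partial_n w=0$ on $\partial B_R(0)$; (ii) $f'(\ce)=0$, from the $H^2$-matching of $\partial_n w$ across $\partial B_\ce(0)$ (the identities $(f(\ce)+\ce f'(\ce))\cos\theta=c_1\cos\theta=f(\ce)\cos\theta$ force $f'(\ce)=0$); and (iii) a natural boundary condition on $\partial B_\ce(0)$, arising from admissible variations that are nontrivially affine in $B_\ce(0)$, which involves the loading $s$ and pins down the $\log$-coefficient. Solving the resulting $4\times 4$ linear system returns exactly the coefficients $\alpha_\ce,\beta_\ce,\gamma_\ce$ of \eqref{abc} and the factor $2$ in front of $\log|x|^2$ in \eqref{2201181926}; verification is routine algebra.

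For the minimal value \eqref{valmin} I would use that $\Jcal^s_{0,\ce}(w)=\langle w,w\rangle_{\G_{B_R(0)}}+L(w)$ is quadratic plus linear. Testing the Euler--Lagrange identity $2\langle W^s_{0,\ce},\phi\rangle_{\G_{B_R(0)}}+L(\phi)=0$ against $\phi=W^s_{0,\ce}\in\Bb_{\ce,R}$ gives $L(W^s_{0,\ce})=-2\,\G(W^s_{0,\ce};B_R(0))$, hence
\begin{equation*}
\Jcal^s_{0,\ce}(W^s_{0,\ce})=-\G(W^s_{0,\ce};B_R(0))=\tfrac{1}{2}L(W^s_{0,\ce})=\tfrac{s}{2}\,f(\ce),
\end{equation*}
and substituting $f(\ce)=\tfrac{s E}{16\pi(1-\nu^2)}\bigl(\alpha_\ce+\beta_\ce/\ce^2+\gamma_\ce\ce^2+4\log\ce\bigr)$ read off \eqref{2201181926}--\eqref{abc} and simplifying produces precisely \eqref{valmin}. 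The main obstacle I anticipate is the clean derivation of the natural boundary condition on $\partial B_\ce(0)$: because admissible variations must stay affine inside $B_\ce(0)$, integration by parts in the weak Euler--Lagrange equation yields a nonstandard identity coupling the boundary traces of $\nabla^2 W^s_{0,\ce}$ on $\partial B_\ce(0)$ with the scalar loading $s$, rather than a pointwise clamped or Navier-type condition. Once this is set up carefully, the remainder reduces to bookkeeping of the ODE coefficients and elementary algebraic simplification.
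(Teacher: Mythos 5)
Your proposal is correct, and it reaches the result by a partly different route than the paper. The paper does not derive the formula: it takes \eqref{2201181926} as given, verifies admissibility ($W^s_{0,\ce}\in\Bb_{\ce,R}$, via \eqref{=0bordoR}, \eqref{norm=0bordoR} and the tangential matching \eqref{hesstang01}--\eqref{hesstang02} on $\partial B_\ce(0)$), and then checks stationarity: interior biharmonicity plus the natural condition \eqref{natural}, which --- exactly as you anticipate --- reduces, because admissible test functions are affine on $\partial B_\ce(0)$, to the three scalar identities \eqref{newnat} obtained by testing against $1$, $x_1$, $x_2$; for your symmetric candidate only the $x_1$-equation is nontrivial, and it is the inhomogeneous one (right-hand side $-s$) that fixes the coefficient of $\log|x|^2$, i.e.\ your fourth condition. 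Your derivation of the candidate from the Michell-type solution $f(r)=A+B/r^2+Cr^2+D\log r^2$ with $f(R)=f'(R)=f'(\ce)=0$ is consistent: these homogeneous relations force $B=-C\,\ce^2R^2$ and, once the natural condition sets $D=2$ (with the prefactor $\tfrac{s}{16\pi}\tfrac{E}{1-\nu^2}$), they give precisely $\alpha_\ce,\beta_\ce,\gamma_\ce$ of \eqref{abc}. One caveat: the reduction to the ansatz $w=f(|x|)x_1$ by ``mode decoupling'' is only heuristic as you state it; it is harmless provided you then verify stationarity against \emph{all} of $\Bb_{\ce,R}$ (as in \eqref{newnat}) and invoke strict convexity \eqref{strictconv}, so that any admissible critical point is the unique minimizer --- uniqueness from Lemma~\ref{existmin} alone does not allow you to restrict a priori to the symmetric class without the decoupling argument. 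Where your route genuinely buys something is the minimal value: testing the Euler--Lagrange identity with $\phi=W^s_{0,\ce}$ gives $L(W^s_{0,\ce})=-2\,\G(W^s_{0,\ce};B_R(0))$, hence $\Jcal^s_{0,\ce}(W^s_{0,\ce})=\tfrac12 L(W^s_{0,\ce})=\tfrac{s}{2}f(\ce)$, which yields \eqref{valmin} by the algebra you indicate; the paper instead computes $\int_{A_{\ce,R}(0)}|\nabla^2W^s_{0,\ce}|^2\,\ud x$ and $\int_{A_{\ce,R}(0)}|\Delta W^s_{0,\ce}|^2\,\ud x$ explicitly (see \eqref{2201201658}) and adds the load term \eqref{calcolocarico}; those formulas confirm your identity, since there indeed the load equals $-2\,\G$. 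The price of your shortcut is that the energy split \eqref{2201201658} and the remainder $f_\ce(r,R;s)$ of Corollary~\ref{insec4}, which the paper reuses in Section~\ref{sc:four}, are not produced along the way, so the explicit annulus computation cannot be dispensed with entirely in the broader argument.
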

The proof of Lemma \ref{lm:mindis} is postponed to Appendix \ref{prooflm:mindis}, where we also state Corollary~\ref{insec4}, which will be used in Section~\ref{sc:four}.

\begin{remark}\label{2202211829}
\rm{
Let $b\in\R^2\setminus\{0\}$ and let $\Pi(b)$ denote the $\frac\pi 2$ clockwise rotation of the vector $b$\,, \emph{i.e.}, 
\begin{equation}\label{pigrecobugualemenobortogonale}
\Pi(b)=-b^\perp\,.
\end{equation}
For any $0<h<\ce<R$\,, \blu{
for $\theta_h\coloneqq |b|\de_{\frac h 2\frac{\Pi(b)}{|b|}}-|b|\de_{-\frac h 2 \frac{\Pi(b)}{|b|}}$\,, 
in analogy with \eqref{defJheh} and \eqref{defJ}, we can define the functional $\I_{h,\ce}^{\theta_h/h}\colon \Bnew_{\ce,R}\to \R$ (for this choice of $\theta_h$)} as
\begin{equation*}
\I_{h,\ce}^{\theta_h/h}(w)\coloneqq\G(w;B_R(0))+\frac{|b|}{2\pi(\ce-h)}\int_{\partial B_{\ce-h}(0)}\frac{w\big(x+\frac h 2\frac{\Pi(b)}{|b|}\big)-w\big(x-\frac h 2 \frac{\Pi(b)}{|b|}\big)}{h}\,\ud\Huno(x)\,.
\end{equation*}
\blu{Notice that for $b$ directed along the positive $x$-axis, the previous formula \eqref{defJ} agree.}
By arguing verbatim as in the proof of Proposition~\ref{convhtozero}, we have that, as $h\to 0$\,, the unique minimizer of $\I_{h,\ce}^{\theta_h/h}$ in $\Bnew_{\ce,R}$ converges strongly in $H^2(B_R(0))$ to the unique minimizer in $\Bnew_{\ce,R}$ of the functional $\I_{0,\ce}^{b\de_0}$ defined by
\begin{equation}\label{dadef}
\I_{0,\ce}^{b\de_0}(w)\coloneqq\G(w;B_R(0))+\frac{1}{2\pi\ce}\int_{\partial B_{\ce}(0)}
\langle  \nabla w, \Pi(b)\rangle\,\ud\Huno\,.
\end{equation}
Notice that the minimizer of $\I_{0,\ce}^{b\de_0}$ is given by 
\begin{equation}\label{vudoppiob}
W_{0,\ce}^{b\de_0}(x)\coloneqq \blu{W_{0,\ce}^{|b|e_2\de_0}}\bigg(\Big\langle\frac{\Pi(b)}{|b|},x\Big\rangle,\Big\langle \frac{b}{|b|},x\Big\rangle\bigg)\,,
\end{equation}
where the function $W_{0,\ce}^{se_2\de_0}$ is defined in Lemma \ref{lm:mindis}.

Furthermore, one can easily check that the same proof of Proposition~\ref{convhtozero} applies also to general domains $\Omega$ as well as to a general distribution of dipoles of wedge disclinations 
\begin{equation}\label{thetahJ}
\theta_h\coloneqq\sum_{j=1}^J|b^j|\Big(\de_{x^j+\frac{h}{2}\frac{\Pi(b^j)}{|b^j|}}-\de_{x^j-\frac{h}{2}\frac{\Pi(b^j)}{|b^j|}}\Big)\in\WD(\Omega)\,,
\end{equation}
(with $b^j\in\R^2\setminus\{0\}$ and $\min_{\newatop{j_1,j_2=1,\ldots,J}{j_1\neq j_2}}|x^{j_1}-x^{j_2}|,\min_{j=1,\ldots,J}\di(x^j,\partial\Omega)>2\ce$) approximating the family of edge dislocations $\alpha\coloneqq\sum_{i=1}^Jb^j\de_{x^j}\in\ED(\Omega)$\,.
In such a case, {by arguing as in the proof of Proposition \ref{convhtozero}}, one can show that, as $h\to 0$\,, the unique minimizer $w_{h,\ce}^{\theta_h/h}$ of the functional
\begin{equation}\label{2202231647}
\mathcal{I}^{\theta_h/h}_{h,\ce}(w)\coloneqq \G(w;\Omega)+\sum_{j=1}^J\frac{|b^j|}{2\pi(\ce-h)}\int_{\partial B_{\ce-h}(x^j)}
\frac{w(x+\frac h 2\frac{\Pi(b^j)}{|b^j|})-w(x-\frac h 2 \frac{\Pi(b^j)}{|b^j|})}{h}\,\ud\Huno(x)
\end{equation}
in the set
\begin{equation}\label{2202211820}
\begin{aligned}
\Bb^{\alpha}_{\ce,\Omega}\coloneqq\{w\in H^2_0(\Omega): \text{$w=a^j$ in $B_\ce(x^j)$ for some affine functions $a^j$\,, $j=1,\dots, J$}\}\,,
\end{aligned}
\end{equation}
converges strongly in $H^2(\Omega)$ to the unique minimizer $w_{0,\ce}^\alpha$ in $\Bb^{\alpha}_{\ce,\Omega}$ of the functional
\begin{equation}\label{energyI}
\begin{aligned}
\mathcal{I}^\alpha_{0,\ce}(w)\coloneqq&\, \mathcal{G}(w;{\Omega})+\sum_{j=1}^J\frac{1}{2\pi\ce}\int_{\partial B_{\ce}(x^j)}
\langle  \nabla w, \Pi(b^j)\rangle\,\ud\Huno\\
=&\,\mathcal{G}(w;\Omega_\ce(\alpha))+\sum_{j=1}^J\frac{1}{2\pi\ce}\int_{\partial B_{\ce}(x^j)}
\langle  \nabla w, \Pi(b^j)\rangle\,\ud\Huno\,,
\end{aligned}
\end{equation}
where $\Omega_\ce(\alpha)\coloneqq\Omega\setminus\bigcup_{j=1}^{J} \overline{B}_\ce(x^j)$\,.
}
\end{remark}

\section{Limits for dislocations}\label{sc:four}
In this section, we obtain the full asymptotic expansion in $\ce$ of the singular limit functional
$\mathcal{I}^\alpha_{0,\ce}$ introduced in \eqref{energyI}. 
We first prove the convergence of the minimizers of $\mathcal{I}^\alpha_{0,\ce}$ in a suitable functional setting (see Theorem~\ref{2201181928}) and then, by showing that all terms of the expansion coincide with the corresponding terms of the renormalized energy of edge dislocations of \cite{CermelliLeoni06}, 
we finally deduce the asymptotic energetic equivalence of systems of disclination dipoles with the corresponding systems of edge dislocations.

 Let $\alpha=\sum_{j=1}^{J}b^j\de_{x^j}\in\ED(\Omega)$\,. 
 We consider the following minimum problem
\begin{equation}\label{2112192000}
\min_{w\in\Bb^{\alpha}_{\ce,\Omega}}\mathcal{I}_\ce^\alpha(w)\,,
\end{equation}
where $\mathcal{I}_\ce^\alpha(w)\coloneqq \mathcal{I}^\alpha_{0,\ce}(w)$ is  the functional defined in \eqref{energyI} and $\Bb^{\alpha}_{\ce,\Omega}$ is defined in \eqref{2202211820}.
In order to study the asymptotic behavior of the minimizers and minima of $\mathcal{I}_\ce^\alpha$ as $\ce\to 0$\,, we first introduce some notation.

 Fix  $R>0$ such that $\overline{\Omega}\subset B_R(x^j)$ for every $j=1,\ldots,J$\,, and let $\ce>0$ be such that the (closed) balls $\overline{B}_\ce(x^j)$ are pairwise disjoint and contained in $\Omega$\,, i.e.,  
 \begin{equation}\label{distaminima}
 \ce< D\coloneqq\min_{j=1,\ldots,J}\bigg\{\frac12\dist_{i\neq j}(x^i,x^j)\,, \dist(x^{j},\partial\Omega)\bigg\}\,.
 \end{equation}
 We define the function $W_{\ce}^\alpha\colon\Omega_{\ce}(\alpha)\to\R$ by
 \begin{equation}\label{20220223_1}
W_{\ce}^\alpha(x)\coloneqq\sum_{j=1}^J W_{\ce}^{j}(x)\,,\qquad\text{with}\qquad W_{\ce}^{j}(\cdot)\coloneqq W_{0,\ce}^{b^j\blu{\de_0}}(\cdot-x^j)
  \end{equation}
 (see \eqref{vudoppiob})\,.
 We highlight that the function $W^\alpha_\ce$ depends also on $R$ through the constants defined in \eqref{abc}.
Notice that any function $w\in \Bb^{\alpha}_{\ce,\Omega}$ can be decomposed as 
\begin{equation}\label{2201181904}
w=W^{\alpha}_\ce+\widetilde w
\end{equation}
 where $\widetilde w\in \widetilde{\Bb}^{\alpha}_{\ce,\Omega}$\,, with 
\begin{equation}\label{tildeBdelta}
\begin{aligned}
\widetilde{\Bb}^{\alpha}_{\ce,\Omega}\coloneqq &\, \{\widetilde{w}\in H^2_0(\Omega)-W^{\alpha}_\ce: \text{$\widetilde{w}+W^\alpha_{\ce}=a^j$ in $B_\ce(x^j)$} \\
&\, \phantom{\{\widetilde{w}\in H^2_0(\Omega)-W^{\alpha}_\ce:\,} \text{for some affine functions $a^j$\,, $j=1,\dots, J$}\}\\
\equiv&\, {\Bb}^{\alpha}_{\ce,\Omega}-W^\alpha_\ce\,.
\end{aligned}
\end{equation}

Therefore, in view of the decomposition \eqref{2201181904}, for every $w\in\Bb_{\ce,\Omega}^\alpha$ we have
\begin{equation}\label{20220223_2}
\I_{\ce}^\alpha(w)=\G(W_{\ce}^\alpha;\Omega_\ce(\alpha))+\sum_{j=1}^J\frac{1}{2\pi\ce}\int_{\partial B_\ce(x^j)}\langle\nabla W_\ce^\alpha,\Pi(b^j)\rangle\,\ud\Huno+\widetilde{\I}_{\ce}^\alpha(\widetilde{w})\,,
\end{equation}
where 
\begin{equation}\label{defItilde}
\begin{split}
\widetilde{\mathcal I}^{\alpha}_\ce(\widetilde w)\coloneqq
\mathcal{G}(\widetilde{w};\Omega_{\ce}(\alpha))
+&\,\frac{1+\nu}{E} \sum_{j=1}^J  \int_{\Omega_{\ce}(\alpha)}   
\Big(\nabla^2W^j_{\ce}:\nabla^2\widetilde{w}-\nu  \Delta W^j_{\ce}\Delta \widetilde{w}\Big)\,\mathrm{d} x\\
+&\,\sum_{j=1}^J\frac{1}{2\pi\ce}\int_{\partial B_{\ce}(x^j)}
\langle  \nabla \widetilde{w}, \Pi(b^j)\rangle\,\ud\Huno\,.
\end{split}
\end{equation}
Notice that the integration for the bulk term $\mathcal{G}$ above is performed on $\Omega_{\ce}(\alpha)$ and not on~$\Omega$\,, as the function $\widetilde{w}$ is not, in general, affine in $\bigcup_{j=1}^J B_{\ce}(x^j)$\,.

In view of \eqref{20220223_2}, as in \cite[Theorem 4.1]{CermelliLeoni06}, the minimum problem \eqref{2112192000} (for $w$) is equivalent to the following minimum problem (for $\widetilde w$)
\begin{equation}\label{2112192003}
\begin{aligned}
\min_{\widetilde w\in \widetilde{\Bb}^{\alpha}_{\ce,\Omega}}\widetilde{\mathcal I}^{\alpha}_\ce(\widetilde{w})\,.
\end{aligned}
\end{equation}

\begin{lemma}\label{20220222_1}
For every $\widetilde w\in\widetilde{\Bb}^{\alpha}_{\ce,\Omega}$ we have
\begin{equation}\label{20220222_2}
\begin{aligned}
\widetilde{\mathcal I}^{\alpha}_\ce(\widetilde w)=&\,
\mathcal{G}(\widetilde{w};\Omega_{\ce}(\alpha))
+\frac{1+\nu}{E}\sum_{j=1}^J\bigg(-(1-\nu)\int_{\partial\Omega}(\partial_{n}\Delta W_{\ce}^j)\widetilde{w} \,\ud\Huno\\
&\,+\int_{\partial\Omega}\langle \nabla^2 W_{\ce}^j   n,\nabla \widetilde{w} \rangle \,\ud\Huno- \nu\int_{\partial\Omega}\Delta W_{\ce}^j \partial_{n}\widetilde{w} \,\ud\Huno\bigg)\\
&\,+\sum_{j=1}^{J}\Bigg(\frac{1+\nu}{E}\sum_{i=1}^J\bigg((1-\nu)\int_{\partial B_\ce(x^i)}(\partial_{n}\Delta W_{\ce}^j)\widetilde{w} \,\ud\Huno
- \int_{\partial B_{\ce}(x^i)}\langle  \nabla^2 W_{\ce}^j n,\nabla \widetilde{w} \rangle \,\ud\Huno\\
&\,\phantom{- \sum_{i=1}^J}\quad+\nu\int_{\partial B_{\ce}(x^i)}\Delta W_{\ce}^j \partial_{n} \widetilde{w} \,\ud\Huno\bigg)+\frac{1}{2\pi\ce}\int_{\partial B_{\ce}(x^j)}
\langle  \nabla \widetilde{w}, \Pi(b^j)\rangle\,\ud\Huno\Bigg)\,.
\end{aligned}
\end{equation}
\end{lemma}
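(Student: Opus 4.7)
The identity is essentially an application of Green's second identity for the biharmonic operator to the mixed term of $\widetilde{\mathcal I}^{\alpha}_\ce$. The starting observation is that each function $W_\ce^j$ defined in \eqref{20220223_1} is smooth and \emph{biharmonic} on $\Omega_\ce(\alpha)$: in fact, by \eqref{vudoppiob} and the explicit formula \eqref{2201181926}, one checks directly that $\Delta^2 W_{0,\ce}^{b^j}=0$ pointwise in $\R^2\setminus\{0\}$, hence $\Delta^2 W_\ce^j\equiv 0$ on $\Omega_\ce(\alpha)\subset B_R(x^j)\setminus\{x^j\}$ (where $R$ is as in \eqref{distaminima}).

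Starting from \eqref{defItilde}, it suffices to rewrite, for each $j=1,\dots,J$, the integral
\begin{equation*}
\int_{\Omega_\ce(\alpha)}\bigl(\nabla^2 W_\ce^j:\nabla^2\widetilde w-\nu\,\Delta W_\ce^j\,\Delta\widetilde w\bigr)\,\ud x
\end{equation*}
by integration by parts, moving the derivatives from $\widetilde w$ onto $W_\ce^j$. Since $\widetilde w\in H^2(\Omega)$ and $W_\ce^j\in C^\infty(\overline{\Omega_\ce(\alpha)})$, a standard density/regularization argument together with the classical Green-type identity for the biharmonic operator yields
\begin{equation*}
\begin{split}
\int_{\Omega_\ce(\alpha)}\bigl(\nabla^2 W_\ce^j:\nabla^2\widetilde w-\nu\Delta W_\ce^j\Delta\widetilde w\bigr)\,\ud x
&=(1-\nu)\!\int_{\Omega_\ce(\alpha)}\!\!\widetilde w\,\Delta^2 W_\ce^j\,\ud x\\
&\quad +\int_{\partial\Omega_\ce(\alpha)}\!\!\langle\nabla^2W_\ce^j\,\nu_{\Omega_\ce(\alpha)},\nabla\widetilde w\rangle\,\ud\Huno\\
&\quad -(1-\nu)\!\int_{\partial\Omega_\ce(\alpha)}\!\!\widetilde w\,\partial_{\nu_{\Omega_\ce(\alpha)}}\Delta W_\ce^j\,\ud\Huno\\
&\quad -\nu\!\int_{\partial\Omega_\ce(\alpha)}\!\!\Delta W_\ce^j\,\partial_{\nu_{\Omega_\ce(\alpha)}}\widetilde w\,\ud\Huno,
\end{split}
\end{equation*}
where $\nu_{\Omega_\ce(\alpha)}$ denotes the outer unit normal to $\Omega_\ce(\alpha)$. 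By the biharmonicity of $W_\ce^j$ noted above, the bulk term vanishes.

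It remains to split $\partial\Omega_\ce(\alpha)=\partial\Omega\cup\bigcup_{i=1}^J\partial B_\ce(x^i)$, noting that on $\partial\Omega$ the outer normal of $\Omega_\ce(\alpha)$ coincides with $n$ (the outer normal to $\Omega$), while on $\partial B_\ce(x^i)$ the outer normal of $\Omega_\ce(\alpha)$ equals $-n$ (with $n$ the outer normal to the ball $B_\ce(x^i)$). Substituting this decomposition, multiplying by $\frac{1+\nu}{E}$, summing over $j=1,\dots,J$, and keeping the surface loading term $\sum_j\frac{1}{2\pi\ce}\int_{\partial B_\ce(x^j)}\langle\nabla\widetilde w,\Pi(b^j)\rangle\,\ud\Huno$ from \eqref{defItilde} unchanged, we arrive precisely at \eqref{20220222_2}. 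The only non-routine point is the sign bookkeeping produced by the opposite orientation of the inner boundaries $\partial B_\ce(x^i)$ versus the outer boundary $\partial\Omega$, which is exactly the source of the sign flip between the first block (integrals on $\partial\Omega$) and the second block (integrals on $\partial B_\ce(x^i)$) in \eqref{20220222_2}.
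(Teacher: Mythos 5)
Your proposal is correct and follows essentially the same route as the paper's proof: the paper likewise integrates by parts via the Gauss--Green theorem (applied separately to the Hessian and Laplacian terms and then combined with coefficient $-\nu$), uses $\Delta^2 W_\ce^j\equiv 0$ in $\Omega_\ce(\alpha)$ to kill the bulk term, and splits $\partial\Omega_\ce(\alpha)$ into $\partial\Omega$ and the circles $\partial B_\ce(x^i)$, the sign flip coming exactly from the reversed orientation of the inner boundaries. Only a cosmetic remark: $R$ is the radius fixed so that $\overline\Omega\subset B_R(x^j)$ (introduced just before \eqref{distaminima}, which defines $D$, not $R$), and biharmonicity should be invoked on $A_{\ce,R}(x^j)\supset\Omega_\ce(\alpha)$, where $W^j_\ce$ is smooth, rather than on all of $\R^2\setminus\{x^j\}$, since $W^{b^j}_{0,\ce}$ is only $H^2$ across $\partial B_\ce(0)$.
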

\begin{proof}
Let $\widetilde w\in \widetilde{\Bb}^{\alpha}_{\ce,\Omega}$ be fixed.
By the Gauss--Green Theorem, for every $j=1,\dots, J$ and for every $0<\ce<D$\,, we have
\begin{equation}\label{22011101622}
\begin{aligned}
 \int_{\Omega_{\ce}(\alpha)}
 \nabla^2W_{\ce}^j:\nabla^2\widetilde{w}\, \ud x=&\,
-\int_{\partial\Omega}(\partial_{n}\Delta W_{\ce}^j)\widetilde{w} \,\ud\Huno
 + \sum_{i=1}^J\int_{\partial B_{\ce}(x^i)}(\partial_{n}\Delta W_{\ce}^j)\widetilde{w} \,\ud\Huno\\
&+ \int_{\partial\Omega}\langle \nabla^2 W_{\ce}^j   n,\nabla \widetilde{w} \rangle \,\ud\Huno
- \sum_{i=1}^J\int_{\partial B_{\ce}(x^i)}\langle  \nabla^2 W_{\ce}^j n,\nabla \widetilde{w} \rangle \,\ud\Huno\,,
\end{aligned}
\end{equation}
and
\begin{equation}\label{22011101624}
\begin{aligned}
 \int_{\Omega_{\ce}(\alpha)}
 \Delta W_{\ce}^j\Delta \widetilde{w}\, \ud x=&\,
- \int_{\partial\Omega}(\partial_{n}\Delta W_{\ce}^j) \widetilde{w} \,\ud\Huno
+ \sum_{i=1}^J\int_{\partial B_{\ce}(x^i) }(\partial_{n}\Delta W_{\ce}^j) \widetilde{w}\,\ud\Huno\\
&+ \int_{\partial\Omega}\Delta W_{\ce}^j \partial_{n}\widetilde{w} \,\ud\Huno
- \sum_{i=1}^J\int_{\partial B_{\ce}(x^i)}\Delta W_{\ce}^j \partial_{n} \widetilde{w} \,\ud\Huno\,,
\end{aligned}
\end{equation}
where we have used that 
$ \Delta^2W_{\ce}^j\equiv 0$ in $\Omega_{\ce}(\alpha)$ for every $j=1,\ldots,J$\,.
By \eqref{22011101622} and \eqref{22011101624} it follows that
\begin{equation*}
\begin{aligned}
&\,\int_{\Omega_{\ce}(\alpha)}   
\Big( \nabla^2W^j_{\ce}:\nabla^2\widetilde{w}-\nu  \Delta W^j_{\ce}\Delta \widetilde{w}\Big)\,\ud  x\\
=&\,-(1-\nu)\int_{\partial\Omega}(\partial_{n}\Delta W_{\ce}^j)\widetilde{w} \,\ud\Huno+\int_{\partial\Omega}\langle \nabla^2 W_{\ce}^j   n,\nabla \widetilde{w} \rangle \,\ud\Huno-\nu \int_{\partial\Omega}\Delta W_{\ce}^j \partial_{n}\widetilde{w} \,\ud\Huno\\
&\,+(1-\nu)\sum_{i=1}^J\int_{\partial B_\ce(x^i)}(\partial_{n}\Delta W_{\ce}^j)\widetilde{w} \,\ud\Huno\\
&\,- \sum_{i=1}^J\int_{\partial B_{\ce}(x^i)}\langle  \nabla^2 W_{\ce}^j n,\nabla \widetilde{w} \rangle \,\ud\Huno+\nu\sum_{i=1}^J\int_{\partial B_{\ce}(x^i)}\Delta W_{\ce}^j \partial_{n} \widetilde{w} \,\ud\Huno\,,
\end{aligned}
\end{equation*}
which, in view of the very definition of $\widetilde{\mathcal I}^\alpha_{\ce}$ in \eqref{defItilde}, implies {\eqref{20220222_2}}.
\end{proof}
\begin{remark}\label{maybeuseful}
\rm{Let $\alpha=\sum_{j=1}^J b^j\de_{x^j}\in\ED(\Omega)$\,. For every $0<r<R$ and for every $j=1,\ldots,J$ we have that the plastic functions $W^j_{\ce}$ converge in $C^\infty(A_{r,R}(x^j))$, as $\ce\to0$\,, to the function~$W^j_0$ defined by
\begin{equation}\label{20220311}
\begin{aligned}
W^j_0(x)\coloneqq&\, \frac{|b^j|}{8\pi}\frac{E}{1-\nu^2}\Big((1-\log R^2)-\frac{|x|^2}{R^2}+\log|x|^2\Big)\Big\langle\frac{\Pi(b^j)}{|b^j|},x-x^j\Big\rangle\,.
\end{aligned}
\end{equation}
It follows that  $W_\ce^\alpha\to \sum_{j=1}^JW^j_0\eqqcolon W_0^\alpha$ in $C^\infty(\Omega_{r}(\alpha))$ and hence in $H^2_{\loc}\big(\Omega\setminus\bigcup_{j=1}^J\{x^j\}\big)$\,.
Therefore, in the spirit of \eqref{tildeBdelta} 
we set
\begin{equation}\label{tildeBzero}
\widetilde{\Bb}^{\alpha}_{0,\Omega}\coloneqq\{w\in H^2(\Omega)\,:\,w=-{W}^{\alpha}_0\,,\, \partial_n w=-\partial_n{W}^{\alpha}_0\textrm{ on }\partial\Omega\}\,.
\end{equation}
}
\end{remark}
Now we prove the following theorem, which is the equivalent of \cite[Theorem 4.1]{CermelliLeoni06} in terms of the Airy stress function.
\begin{theorem}\label{2201181928}
Let $\alpha=\sum_{j=1}^Jb^j\de_{x^j}\in\ED(\Omega)$ and let
$\mathcal{I}_\ce^\alpha$ be the functional in \eqref{20220223_2} 
for every $\ce>0$\,.
For $\ce>0$ small enough, 
the minimum problem \eqref{2112192000} admits a unique solution $w_\ce^\alpha$\,. 
Moreover, $w_\ce^\alpha\to w^\alpha_0$, as $\ce\to 0$\,, strongly 
 in $H^2_{\loc}(\Omega\setminus\bigcup_{j=1}^J\{x^j\})$\,, where $w^\alpha_0\in H^2_{\loc}(\Omega\setminus\bigcup_{j=1}^J\{x^j\})$ is the unique distributional solution to
%
%
%
\begin{equation}\label{limsol}
\begin{cases}
\displaystyle \frac{1-\nu^2}{E}\Delta^2 w=-\sum_{j=1}^{J}|b^j|\partial_{\frac{(b^j)^\perp}{|b^j|}}\de_{x^j} &\text{in $\Omega$}\\[2mm]
w=\partial_n w=0&\text{on $\partial\Omega$\,.}
\end{cases}
\end{equation}
\end{theorem}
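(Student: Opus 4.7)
The plan is to exploit the decomposition $w=W_\ce^\alpha+\widetilde w$ from \eqref{2201181904} so that, in view of \eqref{20220223_2}, minimizing $\I^\alpha_\ce$ over $\Bb^\alpha_{\ce,\Omega}$ is equivalent to minimizing $\widetilde{\I}^\alpha_\ce$ over the affine set $\widetilde{\Bb}^\alpha_{\ce,\Omega}$. Existence and uniqueness of the minimizer $w_\ce^\alpha$ then follow from the direct method of the Calculus of Variations: $\widetilde{\I}^\alpha_\ce$ is the sum of the strictly convex quadratic form $\G(\widetilde w;\Omega_\ce(\alpha))$ and terms that are linear in $\widetilde w$, yielding strict convexity hence uniqueness. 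Coercivity over $\widetilde{\Bb}^\alpha_{\ce,\Omega}$ is provided by Friedrich-type inequalities on the perforated domain $\Omega_\ce(\alpha)$ (from the appendices), combined with trace estimates that control the linear boundary contributions by $\sqrt{\G(\widetilde w;\Omega_\ce(\alpha))}$; here one uses that each $W_\ce^j$ is smooth off its core, as recorded in Remark~\ref{maybeuseful}.

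For the passage to the limit as $\ce\to 0$, the key step is to establish bounds on $\widetilde w_\ce^\alpha$ that are uniform in $\ce$. To this end, construct, for every small $\ce$, an admissible competitor approximating a fixed $\widetilde w_0\in\widetilde{\Bb}^\alpha_{0,\Omega}$: the non-standard affine constraint $w=a^j$ on each $B_\ce(x^j)$ is enforced by invoking the density result Proposition~\ref{prop:approx}, which produces smooth, traction-free approximants that are affine near every defect. Since $W_\ce^\alpha\to W_0^\alpha$ in $C^\infty_\loc(\Omega\setminus\bigcup_{j=1}^J\{x^j\})$ by Remark~\ref{maybeuseful}, the energy of this competitor is bounded uniformly in $\ce$; combined with coercivity, this yields a uniform $H^2_\loc(\Omega\setminus\bigcup_{j=1}^J\{x^j\})$-bound on $\widetilde w_\ce^\alpha$, and a subsequence converges weakly to some $\widetilde w_0^\alpha\in\widetilde{\Bb}^\alpha_{0,\Omega}$. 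To identify $w_0^\alpha\coloneqq W_0^\alpha+\widetilde w_0^\alpha$ as a distributional solution of \eqref{limsol}, pass to the limit in the Euler--Lagrange equation satisfied by $w_\ce^\alpha$,
\begin{equation*}
\frac{1+\nu}{E}\int_{\Omega_\ce(\alpha)}\bigl(\nabla^2 w_\ce^\alpha:\nabla^2\phi-\nu\Delta w_\ce^\alpha\,\Delta\phi\bigr)\,\ud x+\sum_{j=1}^J\frac{1}{2\pi\ce}\int_{\partial B_\ce(x^j)}\langle\nabla\phi,\Pi(b^j)\rangle\,\ud\Huno(x)=0\,,
\end{equation*}
tested against $\phi\in C^\infty_c(\Omega)$. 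The bulk contribution converges to $(1-\nu)\int_\Omega w_0^\alpha\,\Delta^2\phi\,\ud x$ after integrating by parts twice, while the boundary integrals on $\partial B_\ce(x^j)$ converge to $\sum_j\langle\nabla\phi(x^j),\Pi(b^j)\rangle$ by continuity of $\nabla\phi$; recalling $\Pi(b)=-b^\perp$ from \eqref{pigrecobugualemenobortogonale}, this precisely produces the singular source $-\sum_j|b^j|\partial_{(b^j)^\perp/|b^j|}\de_{x^j}$ appearing in \eqref{limsol}, up to the prefactor $\frac{1-\nu^2}{E}$. The homogeneous Dirichlet--Neumann conditions on $\partial\Omega$ are inherited from those of $w_\ce^\alpha$ via the smooth convergence $W_\ce^\alpha\to W_0^\alpha$ near $\partial\Omega$.

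Strong convergence in $H^2_\loc(\Omega\setminus\bigcup_{j=1}^J\{x^j\})$ is then upgraded from the weak one by combining it with convergence of energies on compact subdomains (for which Proposition~\ref{prop:approx} again provides recovery sequences), together with the equivalence between $\G^{1/2}(\cdot;K)$ and the $H^2$-norm on each compact $K\Subset\Omega\setminus\bigcup_{j=1}^J\{x^j\}$ coming from Remark~\ref{equinorm}. Uniqueness of the distributional solution to \eqref{limsol} follows by a standard removable-singularity argument: the difference of two solutions is a biharmonic function on $\Omega\setminus\bigcup_{j=1}^J\{x^j\}$ belonging to $H^2_\loc$ near every $x^j$ and carrying zero Cauchy data on $\partial\Omega$, hence extends to an $H^2$ biharmonic function on the whole of $\Omega$ with zero Cauchy data, which must vanish. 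The main obstacle is precisely the construction of admissible competitors for $\widetilde{\I}^\alpha_\ce$ that are affine on each core $B_\ce(x^j)$ while still approximating a target limit in $H^2_\loc$; this non-standard constraint, forced by $\widetilde{\Bb}^\alpha_{\ce,\Omega}$, is what makes the density theorem Proposition~\ref{prop:approx} indispensable and drives the subtlety of both the existence step and the recovery-sequence construction in the $\ce\to 0$ limit.
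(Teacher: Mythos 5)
Your existence, compactness, and strong-convergence steps run essentially parallel to the paper's (same decomposition $w=W^\alpha_\ce+\widetilde w$, coercivity via the $\ce$-independent Poincar\'e and trace inequalities, recovery sequences from Proposition~\ref{prop:approx}), but the identification of the limit equation is where your argument has a genuine gap. The minimum problem \eqref{2112192000} is \emph{constrained}: admissible variations of $w^\alpha_\ce$ must remain in $\Bb^\alpha_{\ce,\Omega}$, i.e.\ be affine in every core $B_\ce(x^j)$. Consequently the stationarity identity you write down holds only for test functions $\phi\in H^2_0(\Omega)$ that are affine in each $B_\ce(x^j)$, not for arbitrary $\phi\in C^\infty_c(\Omega)$; for a general $\phi$ the constrained minimizer does not satisfy the unconstrained Euler--Lagrange equation (a multiplier term supported in the cores appears). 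To repair this you would have to replace $\phi$ by a cut-off first-order Taylor modification near each $x^j$ and show the errors vanish as $\ce\to0$; moreover, even for admissible $\phi$, the bulk integral lives on the perforated domain $\Omega_\ce(\alpha)$, so ``integrating by parts twice'' produces boundary integrals on each $\partial B_\ce(x^j)$ involving $\partial_n\Delta w^\alpha_\ce$, $\nabla^2 w^\alpha_\ce\,n$ and $\Delta w^\alpha_\ce$, and the whole difficulty is precisely the cancellation of these against $\frac1{2\pi\ce}\int_{\partial B_\ce(x^j)}\langle\nabla\phi,\Pi(b^j)\rangle\,\ud\Huno$. In the paper this cancellation rests on the natural boundary conditions satisfied by the explicit profiles $W^j_\ce$ (their minimality in $B_R(x^j)$, identity \eqref{20220216_1}) and the resulting $\mathrm{O}(\ce)$ estimate \eqref{20220222_3}; it cannot be obtained from ``continuity of $\nabla\phi$'' alone. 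Likewise, the claimed convergence of the bulk term to $\frac{1-\nu^2}{E}\int_\Omega w^\alpha_0\,\Delta^2\phi\,\ud x$ implicitly uses $\int\nabla^2w:\nabla^2\phi=\int\Delta w\,\Delta\phi$, which fails on $\Omega_\ce(\alpha)$ (more core-boundary terms), and needs control of $w^\alpha_\ce$ across the shrinking cores, where only weak local information away from the $x^j$'s is available (note $w^\alpha_0\notin H^2(\Omega)$).

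The paper sidesteps the limit of the Euler--Lagrange equation entirely: Propositions~\ref{2201181930} and~\ref{2201252355} give $\widetilde w^\alpha_\ce\to\widetilde w^\alpha_0$ and show that $\widetilde w^\alpha_0$ minimizes the limit functional $\widetilde{\I}^\alpha_0$ (lower semicontinuity plus the recovery sequences of Proposition~\ref{prop:approx}), whence $\Delta^2\widetilde w^\alpha_0=0$ in $\Omega$ by interior variations of the \emph{unconstrained} limit problem; the singular right-hand side of \eqref{limsol} then comes for free from the explicit formula \eqref{20220311} for $W^\alpha_0$, since $\Delta^2 w^\alpha_0=\Delta^2 W^\alpha_0$ (see \eqref{20220311_1}), and the boundary conditions follow from $\widetilde w^\alpha_0\in\widetilde{\Bb}^\alpha_{0,\Omega}$. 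So either switch to this identification through the explicit plastic part, or keep your route but supply the adapted test functions and reprove the analogue of \eqref{20220222_3} for them. A minor further point: in your uniqueness argument you should specify the class in which uniqueness is claimed (the solution is not $H^2$ near the $x^j$'s); the correct statement is that the difference of two distributional solutions is biharmonic in $\mathcal{D}'(\Omega)$, hence smooth by hypoellipticity, and then vanishes by the zero Cauchy data on $\partial\Omega$.
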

Theorem \ref{2201181928} is a consequence of Propositions~\ref{2201181930} and~\ref{2201252355} below, which are the analogue of \cite[Lemma 4.2]{CermelliLeoni06} and \cite[Lemma 4.3]{CermelliLeoni06}, respectively.
%
\begin{proposition}\label{2201181930}
Let $\alpha\in\ED(\Omega)$ and let $\ce>0$ be small enough. For every $\widetilde w\in \widetilde{\Bb}^{\alpha}_{\ce,\Omega}$ we have
\begin{equation}\label{2201182011}
C_1\big(\|\widetilde w\|_{H^2(\Omega_{\ce}(\alpha))}^2-\|\widetilde w\|_{H^2(\Omega_{\ce}(\alpha))}-1\big) \le \widetilde{\mathcal{I}}^\alpha_{\ce}(\widetilde w)\le
C_2\big(\|\widetilde w\|_{H^2(\Omega_{\ce}(\alpha))}^2+\|\widetilde w\|_{H^2(\Omega_{\ce}(\alpha))}+1\big)\,,
\end{equation}
for some constants $0<C_1<C_2$ independent of $\ce$\,. 
Moreover, 
problem \eqref{2112192003} admits a unique solution $\widetilde{w}^\alpha_{\ce}\in \widetilde{\Bb}^{\alpha}_{\ce,\Omega}$ and 
 $\|\widetilde{w}_{\ce}^\alpha\|_{H^2(\Omega_{\ce}(\alpha))}$ is uniformly bounded with respect to $\ce$\,.
 Furthermore, there exists $\widetilde{w}^\alpha_0\in \widetilde{\Bb}^{\alpha}_{0,\Omega}$ such that as $\ce\to0$ and up to a (not relabeled) subsequence,
 \begin{equation}\label{2201141838} 
 \widetilde{w}_\ce^{\alpha}
 \weakly \widetilde{w}_0^\alpha\quad\text{weakly in $H^2(\Omega)$.}
\end{equation}
\end{proposition}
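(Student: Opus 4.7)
The strategy is to reduce the three claims of the proposition to the two-sided estimate \eqref{2201182011}, from which existence, uniqueness, the uniform bound, and weak compactness follow by standard direct-method and compactness arguments. The quadratic part $\mathcal G(\widetilde w;\Omega_\ce(\alpha))$ is comparable to $\|\widetilde w\|_{H^2(\Omega_\ce(\alpha))}^2$ uniformly in $\ce$: since $\widetilde w+W^\alpha_\ce\in H^2_0(\Omega)$ extends through the affine filling on each $B_\ce(x^j)$, Friedrichs's inequality applies on $\Omega$ and yields a Friedrichs constant for $\widetilde w$ on $\Omega_\ce(\alpha)$ that is bounded above independently of $\ce$, so that the argument of Lemma~\ref{propItheta} produces the quadratic terms in \eqref{2201182011}.

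The main obstacle is the control of the linear part uniformly in $\ce$. A direct Cauchy--Schwarz bound on $\int_{\Omega_\ce(\alpha)}\nabla^2 W^j_\ce:\nabla^2 \widetilde w\,\mathrm{d}x$ fails because $\|\nabla^2 W^j_\ce\|_{L^2(\Omega_\ce(\alpha))}$ diverges like $\sqrt{\log(1/\ce)}$, as can be read off from the computation in the proof of Lemma~\ref{lemma:edgetrue}. I therefore use the integration-by-parts identity \eqref{20220222_2} of Lemma~\ref{20220222_1}, which rewrites the bulk cross-term as boundary integrals on $\partial\Omega$ and on each $\partial B_\ce(x^i)$, $i=1,\dots,J$. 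The $\partial \Omega$-integrals are bounded by $C\|\widetilde w\|_{H^2(\Omega)}$ via the trace inequality, using that $W^j_\ce\to W^j_0$ in $C^\infty$ near $\partial\Omega$ (Remark~\ref{maybeuseful}) so that all coefficients are uniformly bounded. The $\partial B_\ce(x^i)$-integrals for $i\ne j$ contribute at most $O(\ce^{1/2})\|\widetilde w\|_{H^2(\Omega)}$: rescaling $B_\ce(x^i)\to B_1$ turns the trace inequality constants into absolute constants, and the $C^\infty$-boundedness of $W^j_\ce$ on a fixed neighborhood of $x^i$ (since $|x^i-x^j|\ge 2D>2\ce$) provides the $\ce^{1/2}$ factor.

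The critical step, and the technical heart of the proof, is the $i=j$ boundary contribution combined with the surface load $\frac{1}{2\pi\ce}\int_{\partial B_\ce(x^j)}\langle \nabla \widetilde w,\Pi(b^j)\rangle$. My plan is to view this combination as the first variation, at $W^j_\ce$, of the dipole functional $\mathcal J^{b^j}_{0,\ce}$ (Remark~\ref{2202211829}, Lemma~\ref{lm:mindis}) tested against the restriction of $\widetilde w$ to a neighborhood of $x^j$. Since $W^j_\ce$ minimizes $\mathcal J^{b^j}_{0,\ce}$ on its affine-constraint class in $B_R(x^j)$, the associated Euler--Lagrange equation annihilates every admissible test direction. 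I decompose the restriction of $\widetilde w$ near $x^j$ into an admissible variation (killed by the Euler--Lagrange equation) plus a smooth remainder coming from the fields $W^i_\ce$, $i\ne j$, and from a fixed cutoff: the latter is $\ce$-uniformly $C^\infty$-bounded, so its contribution is at most $C\|\widetilde w\|_{H^2(\Omega_\ce(\alpha))}$. Combining all three pieces and absorbing via Young's inequality yields \eqref{2201182011}.

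Granted \eqref{2201182011}, existence and uniqueness of $\widetilde w^\alpha_\ce$ follow by the direct method and by strict convexity of $\widetilde{\mathcal I}^\alpha_\ce$ on $\widetilde{\Bb}^\alpha_{\ce,\Omega}$ inherited from $\mathcal G$ (Lemma~\ref{propItheta}). The uniform $H^2(\Omega_\ce(\alpha))$-bound on $\widetilde w^\alpha_\ce$ comes from comparing its energy to that of a fixed competitor $\widetilde w^*_\ce\in \widetilde{\Bb}^\alpha_{\ce,\Omega}$ of uniformly bounded norm, e.g.\ constructed by smoothly extending $-W^\alpha_\ce$ from a neighborhood of $\partial\Omega$ and glueing it to $a^j-W^\alpha_\ce$ on each $B_\ce(x^j)$ through a fixed cutoff. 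To promote weak compactness from $H^2_{\loc}(\Omega\setminus\bigcup_j\{x^j\})$ to $H^2(\Omega)$, I extend $\widetilde w^\alpha_\ce$ inside each $B_\ce(x^j)$ by the natural formula $a^j-W^\alpha_\ce$: the $W^j_\ce$-part is affine on the shrinking ball and has $H^2(B_\ce(x^j))$-norm $o(1)$, while the $W^i_\ce$-parts with $i\ne j$ are smooth and uniformly bounded; weak continuity of traces on $\partial\Omega$, together with the smooth convergence $W^\alpha_\ce\to W^\alpha_0$ near $\partial\Omega$, finally ensures that the weak limit $\widetilde w^\alpha_0$ belongs to $\widetilde{\Bb}^\alpha_{0,\Omega}$.
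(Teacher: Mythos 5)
Your handling of the linear term follows the paper's own route: the integration-by-parts identity of Lemma~\ref{20220222_1}, trace bounds on $\partial\Omega$, and, for the critical $\partial B_\ce(x^j)$-terms plus the surface load, the natural boundary condition coming from the minimality of $W^j_\ce$ together with Taylor expansion of the other plastic fields (this is exactly \eqref{20220216_1}, \eqref{2201182343}--\eqref{20220214_5} and \eqref{220210_1}--\eqref{220210_3}). The genuine gap is in the coercivity step. You assert that, since $\widetilde w+W^\alpha_\ce\in H^2_0(\Omega)$, Friedrichs's inequality ``yields a Friedrichs constant for $\widetilde w$ on $\Omega_\ce(\alpha)$ bounded independently of $\ce$''. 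It does not: Friedrichs controls $\widetilde w+W^\alpha_\ce$, and transferring the estimate to $\widetilde w$ forces you to add and subtract $W^\alpha_\ce$ globally, whose Hessian has $L^2(\Omega_\ce(\alpha))$-norm of order $\sqrt{|\log\ce|}$ --- precisely the divergence you yourself invoked to justify integrating by parts in the linear term. Followed literally, your argument only gives $\G(\widetilde w;\Omega_\ce(\alpha))\ge C_1\|\widetilde w\|^2_{H^2(\Omega_\ce(\alpha))}-C|\log\ce|$, which destroys the $\ce$-uniformity of the constants in \eqref{2201182011}; the minimizers would then only be bounded by $C|\log\ce|^{1/2}$ and the weak $H^2(\Omega)$-compactness \eqref{2201141838} would not follow. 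The paper circumvents this through the $\ce$-independent Poincar\'e inequality on the perforated domain (Proposition~\ref{202112230052}, applied with $f=W^\alpha_\ce$, which only uses the uniformly smooth datum on $\partial\Omega$), together with Proposition~\ref{2201202252}. If you want to keep your idea, you must first cut $W^\alpha_\ce$ off near $\partial\Omega$: with a fixed cutoff $\chi$ equal to $1$ near $\partial\Omega$ and supported away from the points $x^j$, the function $\widetilde w+\chi W^\alpha_\ce$ lies in $H^2_0(\Omega)$, $\chi W^\alpha_\ce$ is bounded in $H^2(\Omega)$ uniformly in $\ce$, and $\nabla^2\widetilde w$ is uniformly bounded inside each $B_\ce(x^j)$ (there $\widetilde w=a^j-W^\alpha_\ce$ with $W^j_\ce$ affine); only then does Friedrichs give an $\ce$-uniform inequality.

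A second omission concerns the passage from the uniform $H^2(\Omega_\ce(\alpha))$-bound to compactness in $H^2(\Omega)$: inside each shrinking ball the only nontrivial piece of $\widetilde w^\alpha_\ce$ is the affine function $a^j$, whose coefficients are determined by the trace of $\widetilde w^\alpha_\ce+W^\alpha_\ce$ on $\partial B_\ce(x^j)$, and you never explain how these are controlled (you only discuss the $W^i_\ce$-parts). The paper does this via the $\ce$-independent trace inequality and Proposition~\ref{2201202252}; once the uniform bound on $\|\widetilde w^\alpha_\ce\|_{H^2(\Omega_\ce(\alpha))}$ is in hand, an elementary scaled trace estimate on the annulus $A_{\ce,2\ce}(x^j)$ would also give boundedness of $a^j$ in $H^1(B_\ce(x^j))$, but this step has to be made explicit. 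Finally, your claimed $O(\ce^{1/2})\|\widetilde w\|_{H^2(\Omega)}$ bound for the circle integrals with $i\neq j$ is too optimistic, since the scaled trace constant on $\partial B_\ce(x^i)$ absorbs the factor $\Huno(\partial B_\ce(x^i))^{1/2}$; what one actually gets, and what \eqref{2201182011} needs, is a bound of the form $C\|\widetilde w\|_{H^2(\Omega_\ce(\alpha))}$, so this inaccuracy is harmless.
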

\begin{proposition}
\label{2201252355}
Let $\alpha=\sum_{j=1}^Jb^j\de_{x^j}\in\ED(\Omega)$ and let $\ce>0$ be small enough.
Let $\widetilde w^\alpha_{\ce}$ and $\widetilde w^\alpha_{0}$ be as in Proposition~\ref{2201181930}\,.
Then, as $\ce\to 0$\,, the whole sequence $\widetilde{w}^\alpha_\ce$ converges to $\widetilde{w}^\alpha_0$\,,  strongly in $H^2_\loc\big(\Omega\setminus\bigcup_{j=1}^J\{x^j\}\big)$ and
 $\widetilde w^\alpha_{0}$ is
the unique minimizer in $ \widetilde{\Bb}^{\alpha}_{0,\Omega}$ of the functional $\widetilde{\mathcal{I}}^\alpha_{0}$ defined by
\begin{equation*}
\begin{aligned}
\widetilde{\mathcal{I}}^\alpha_{0}(\widetilde{w})\coloneqq&\,\mathcal{G}(\widetilde{w};\Omega)
+\frac{1+\nu}{E} \sum_{j=1}^J \Big( -(1-\nu)
 \int_{\partial\Omega}(\partial_{n}\Delta  W_{0}^j)\widetilde{w} \,\ud \Huno\\
&\phantom{\mathcal{G}(\widetilde{w};\Omega)
+\frac{1+\nu}{E} \sum_{j=1}^J}+ \int_{\partial\Omega}\langle \nabla^2  W_{0}^j   n,\nabla \widetilde{w} \rangle \,\ud \Huno
-\nu \int_{\partial\Omega}\Delta  W_{0}^j \partial_{n}\widetilde{w} \,\ud\Huno \Big)\,.
\end{aligned}
\end{equation*}
Moreover,
\begin{equation}\label{20220419}
\Delta^2\widetilde{w}^\alpha_0=0 \qquad\textrm{in }\Omega
\end{equation}
and
\begin{equation}\label{20220222_7}
\widetilde{\mathcal{I}}^\alpha_{\ce}(\widetilde w^\alpha_{\ce})\to \widetilde{\mathcal{I}}^\alpha_{0}(\widetilde w^\alpha_{0})\qquad\textrm{as }\ce\to 0\,.
\end{equation}
\end{proposition}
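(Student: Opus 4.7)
The plan is to establish Proposition~\ref{2201252355} in three main steps, with the core technical effort concentrated in a recovery-sequence construction that relies on the density Proposition~\ref{prop:approx}.

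\textbf{Step 1: recovery sequence and identification of the minimizer.} For any competitor $\widetilde v\in\widetilde{\Bb}^{\alpha}_{0,\Omega}$, I would build a sequence $\widetilde v_\ce\in\widetilde{\Bb}^{\alpha}_{\ce,\Omega}$ that converges strongly in $H^2(\Omega)$ to $\widetilde v$ and satisfies $\widetilde{\I}^{\alpha}_\ce(\widetilde v_\ce)\to\widetilde{\I}^{\alpha}_0(\widetilde v)$. Applying Proposition~\ref{prop:approx} to $\widetilde v+W^\alpha_0$ produces $H^2$-approximants that are affine on each $B_\ce(x^j)$; a correction by the smooth perturbation $W^\alpha_\ce-W^\alpha_0$ (which by Remark~\ref{maybeuseful} and the explicit form of $W^j_\ce$ in Lemma~\ref{lm:mindis} converges to zero smoothly near $\partial\Omega$) adjusts the $\ce$-dependent boundary data to produce $\widetilde v_\ce\in\widetilde{\Bb}^{\alpha}_{\ce,\Omega}$. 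Convergence of energies is then checked term-by-term in \eqref{20220222_2}: the $\G$-part converges by strong $H^2$-convergence together with $|\Omega\setminus\Omega_\ce(\alpha)|\to 0$; the boundary integrals on $\partial\Omega$ converge by strong $H^{3/2}(\partial\Omega)$-trace convergence and smoothness of $W^j_\ce$ near $\partial\Omega$; the cross-boundary terms on $\partial B_\ce(x^i)$ with $i\ne j$ vanish since $W^j_\ce$ is uniformly smooth near $x^i$ while $|\partial B_\ce(x^i)|=2\pi\ce$; and the diagonal self-terms on $\partial B_\ce(x^j)$ combine with the renormalization line integral $\tfrac{1}{2\pi\ce}\int_{\partial B_\ce(x^j)}\langle\nabla\widetilde v_\ce,\Pi(b^j)\rangle\,\ud\Huno$ and, using the affine behavior of $\widetilde v_\ce$ on $B_\ce(x^j)$ together with the explicit self-interaction in \eqref{2201181926}, their sum is shown to vanish in the limit. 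The same analysis, together with the weak $H^2$-lower semicontinuity of the $\G$-part, yields $\liminf_{\ce\to 0}\widetilde{\I}^{\alpha}_\ce(\widetilde w^\alpha_\ce)\ge\widetilde{\I}^{\alpha}_0(\widetilde w^\alpha_0)$; combined with $\widetilde{\I}^{\alpha}_\ce(\widetilde w^\alpha_\ce)\le\widetilde{\I}^{\alpha}_\ce(\widetilde v_\ce)\to\widetilde{\I}^{\alpha}_0(\widetilde v)$, this proves that $\widetilde w^\alpha_0$ minimizes $\widetilde{\I}^{\alpha}_0$ on $\widetilde{\Bb}^{\alpha}_{0,\Omega}$, and the choice $\widetilde v=\widetilde w^\alpha_0$ delivers \eqref{20220222_7}.

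\textbf{Step 2: uniqueness, whole-family convergence, and the biharmonic equation.} The functional $\widetilde{\I}^{\alpha}_0$ differs from the quadratic $\G(\,\cdot\,;\Omega)$ only by linear terms depending on fixed boundary traces, and $\G$ is positive definite on the space of differences of elements of $\widetilde{\Bb}^{\alpha}_{0,\Omega}$ (which lies in $H^2_0(\Omega)$, by the argument of Lemma~\ref{propItheta}); strict convexity thus yields uniqueness of $\widetilde w^\alpha_0$, and the Urysohn subsequence principle upgrades the subsequential convergence \eqref{2201141838} to the whole family $\{\widetilde w^\alpha_\ce\}_\ce$. Testing the Euler--Lagrange equation for $\widetilde{\I}^{\alpha}_0$ at $\widetilde w^\alpha_0$ against any $\phi\in H^2_0(\Omega)$ (which is an admissible variation, as it leaves the traces on $\partial\Omega$ unchanged), the boundary integrals drop out and integration by parts in the $\G$-part gives $(1-\nu)\int_\Omega\widetilde w^\alpha_0\,\Delta^2\phi\,\ud x=0$, proving \eqref{20220419}.

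\textbf{Step 3: strong $H^2_\loc$-convergence off the singular set.} For fixed $r>0$ and every $\ce<r$, testing the $\ce$-level Euler--Lagrange against $\phi\in C^\infty_c(\Omega_r(\alpha))$ (trivially admissible since $\phi$ is zero, hence affine, on each $B_\ce(x^j)$) shows $\Delta^2\widetilde w^\alpha_\ce=0$ in $\Omega_r(\alpha)$, while $\{\widetilde w^\alpha_\ce\}$ is uniformly bounded in $H^2(\Omega)$ by Proposition~\ref{2201181930}. Interior regularity for the bilaplacian then provides bounds in $C^k_\loc(\Omega_r(\alpha))$ for every $k$, and Rellich compactness delivers the strong $H^2_\loc\big(\Omega\setminus\bigcup_j\{x^j\}\big)$-convergence of $\widetilde w^\alpha_\ce$ to $\widetilde w^\alpha_0$.

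\textbf{Main obstacle.} The delicate point is Step~1, specifically the cancellation between the diagonal self-terms on $\partial B_\ce(x^j)$ and the renormalization line integral. Individually, these integrands carry $\log\ce$ factors coming from the explicit form of $W^j_\ce$ in \eqref{2201181926}, and only a precise algebraic interaction --- which is the very motivation for the specific core-radius renormalization built into $\widetilde{\I}^{\alpha}_\ce$, inherited from the construction of~\cite{CermelliLeoni06} --- produces a finite and vanishing limit. Handling this for general $H^2$-competitors rather than smooth ones is what makes Proposition~\ref{prop:approx} essential: it enables the reduction to competitors whose behavior inside each $B_\ce(x^j)$ is explicitly affine, so that the cancellation can be computed by elementary means.
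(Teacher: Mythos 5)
Your proposal is correct, and for the central part it follows the same route as the paper: the liminf inequality via weak $H^2$ lower semicontinuity of $\G$, the vanishing of the combined $\partial B_\ce(x^j)$-terms and renormalization integral (which you re-derive, but which is already available as \eqref{20220222_3} from the proof of Proposition~\ref{2201181930}, uniformly in the competitor), compactness of traces on $\partial\Omega$ for the linear terms, the recovery sequence from Proposition~\ref{prop:approx} (note you can invoke it directly on $\widetilde v\in\widetilde{\Bb}^{\alpha}_{0,\Omega}$; your ``apply it to $\widetilde v+W_0^\alpha$ and correct by $W^\alpha_\ce-W^\alpha_0$'' is just unwinding its proof), and then strict convexity for uniqueness, Urysohn for whole-family convergence, and the Euler--Lagrange equation in $H^2_0(\Omega)$ for \eqref{20220419} and \eqref{20220222_7}.

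The genuine divergence is your Step~3. The paper proves strong convergence on $\Omega_r(\alpha)$ by a Radon--Riesz-type argument in the scalar product $\langle\cdot,\cdot\rangle_{\G}$ of Remark~\ref{equinorm}: from \eqref{20220222_7}, \eqref{20220222_3}, and \eqref{numerata1606} it deduces $\G(\widetilde w^\alpha_\ce;\Omega_\ce(\alpha))\to\G(\widetilde w^\alpha_0;\Omega)$, and combines this with the weak convergence \eqref{2201141838} and the coercivity \eqref{quadr} to get \eqref{440}. You instead test the $\ce$-level Euler--Lagrange equation with $\phi\in C^\infty_c(\Omega_r(\alpha))$ (admissible since such $\phi$ vanish on the balls and lie in $H^2_0(\Omega)$, and the cross terms with the biharmonic $W^j_\ce$ drop out after integration by parts), conclude $\Delta^2\widetilde w^\alpha_\ce=0$ in $\Omega_r(\alpha)$, and use interior estimates for the bilaplacian together with the uniform $H^2(\Omega)$ bound and compactness. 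This is valid and in fact yields locally smooth convergence, but only on compact subsets of $\Omega\setminus\bigcup_j\{x^j\}$, i.e.\ away from $\partial\Omega$ --- which is exactly what the stated $H^2_{\loc}$ conclusion requires, whereas the paper's energy argument gives strong $H^2(\Omega_r(\alpha))$ convergence up to $\partial\Omega$; the paper's route also avoids invoking interior regularity theory, using only the variational structure.
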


\begin{proof}[Proof of Theorem \ref{2201181928}]
By the additive decomposition in \eqref{2201181904} and by Propositions \ref{2201181930},  we have that, for $\ce>0$ small enough, $w^\alpha_\ce=W^\alpha_\ce+\widetilde w^\ce_\alpha$\,, where $W^\alpha_\ce$ is defined in \eqref{20220223_1} and $\widetilde{w}_\ce^\alpha$ is the unique solution to the minimum problem in \eqref{2112192003}.
Therefore, by Remark \ref{maybeuseful} and by Proposition \ref{2201252355}, we have that $w_\ce^\alpha\to W_0^\alpha+\widetilde{w}_0^\alpha\eqqcolon w^\alpha_0$ in $H^2_\loc\big(\Omega\setminus\bigcup_{j=1}^J\{x^j\}\big)$ as $\ce\to 0$\,.
Notice that, by \eqref{20220419} and by the very definition of $w_0^\alpha$ (see \eqref{20220311}),
\begin{equation}\label{20220311_1}
\frac{1-\nu^2}{E}\Delta^2w_0^\alpha=\frac{1-\nu^2}{E}\Delta^2W_0^\alpha=-\sum_{j=1}^{J}|b^j|\partial_{\frac{(b^j)^\perp}{|b^j|}}\de_{x^j}\qquad\textrm{in }\Omega\,,
\end{equation}
\emph{i.e.}, the first equation in \eqref{limsol}. 
Finally, the boundary conditions are satisfied since $\widetilde{w}^\alpha_0\in\widetilde{\Bb}_{0,\Omega}^\alpha$ (see \eqref{tildeBzero}).
\end{proof}
Now we prove Proposition \ref{2201181930}.
\begin{proof}[Proof of Proposition \ref{2201181930}]
Let $\alpha=\sum_{j=1}^Jb^j\de_{x^j}\in\ED(\Omega)$ and let $\widetilde{w}\in\widetilde{\Bb}_{\ce,\Omega}^\alpha$\,.
We first prove that for every $j=1,\ldots, J$
\begin{equation}\label{20220222_3}
\begin{aligned}
\frac{1+\nu}{E}\sum_{i=1}^J\bigg((1-\nu)\int_{\partial B_\ce(x^i)}(\partial_{n}\Delta W_{\ce}^j)\widetilde{w} \,\ud\Huno
- \int_{\partial B_{\ce}(x^i)}\langle  \nabla^2 W_{\ce}^j n,\nabla \widetilde{w} \rangle \,\ud\Huno&\\
+\nu\int_{\partial B_{\ce}(x^i)}\Delta W_{\ce}^j \partial_{n} \widetilde{w} \,\ud\Huno\bigg)+\frac{1}{2\pi\ce}\int_{\partial B_{\ce}(x^j)}
\langle  \nabla \widetilde{w}, \Pi(b^j)\rangle\,\ud\Huno&=\mathrm{O}(\ce)\,.
\end{aligned}
\end{equation}
To this purpose, we recall that, for every $i=1,\ldots,J$, there exists an affine function $a^i_\ce$ such that
\begin{equation}\label{2201101612}
\widetilde w=a^i_\ce-W^{i}_{\ce}-\sum_{k\neq i}W^k_\ce\qquad\textrm{on }\partial B_\ce(x^i)\,.
\end{equation}
{Notice that $W_\ep^j$ minimizes the energy $\I_{0,\ce}^{b\de_{x^j}}$  referred to the ball $B_R(x^j)$\,: this follows by a simple translation argument keeping \eqref{nuovoA1}, \eqref{dadef}\,, and \eqref{20220223_1} into account. By the characterization of the minimality provided in \eqref{natural}, }
for every function $a$ which is affine in $B_\ce(x^j)$ we have
\begin{equation}\label{20220216_1}
 \begin{aligned}
&  \frac{1-\nu^2}{E}\int_{\partial B_\ce(x^j)}(\partial_{n}\Delta W_{\ce}^j)a \,\ud\Huno+
  \frac{1+\nu}{E}\nu\int_{\partial B_{\ce}(x^j)}  \Delta W_{\ce}^j \partial_na \,\ud\Huno\\
  &-\frac{1+\nu}{E}\int_{\partial B_{\ce}(x^j)}\langle  \nabla^2 W_{\ce}^j n,\nabla a \rangle \,\ud\Huno\\
  =&-\frac{|b^j|}{2\pi\ce}\int_{\partial B_\ce(x^j)}\partial_{\frac{(b^j)^\perp}{|b^j|}}a\,\ud\Huno
  =-\frac{1}{2\pi\ce}\int_{\partial B_\ce(x^j)}\langle\nabla a,\Pi(b^j)\rangle\,\ud\Huno\,.
 \end{aligned}
 \end{equation}
Let $j=1,\ldots,J$ be fixed. 
We first focus on the case $i=j$ in \eqref{20220222_3}\,. 
Recalling that $W_\ce^j$ is affine in $B_\ce(x^j)$\,, {by choosing $a=a_\ep^j-W_\ep^j$ in \eqref{20220216_1},} 
we get
\begin{equation}\label{2201182343}
 \begin{aligned}
 \frac{1-\nu^2}{E}\int_{\partial B_\ce(x^j)}(\partial_{n}\Delta W_{\ce}^j)(a^j_\ce-W^j_\ce) \,\ud\Huno+
  \frac{1+\nu}{E}\nu\int_{\partial B_{\ce}(x^j)}  \Delta W_{\ce}^j \partial_n(a^j_\ce-W^j_\ce) \,\ud\Huno&\\
  -\frac{1+\nu}{E}\int_{\partial B_{\ce}(x^j)}\langle  \nabla^2 W_{\ce}^j n,\nabla(a^j_\ce-W^j_\ce) \rangle \,\ud\Huno
 +\frac{1}{2\pi\ce}\int_{\partial B_\ce(x^j)}\langle\nabla(a^j_\ce-W_\ce^j),\Pi(b^j)\rangle\,\ud\Huno&=0\,.
 \end{aligned}
 \end{equation}
 Furthermore, recalling that {$W_\ce^k$} is smooth in $B_\ce(x^j)$ for every $k\neq j$, by Taylor expansion
 we have that for every $x\in B_\ce(x^j)$
 \begin{equation*}
W^k_\ce(x)=W^k_\ce(x^j)+\langle\nabla W^k_\ce(x^j),x-x^j\rangle+\mathrm{O}(\ce^2) {\quad\textrm{and}\quad
\nabla W^k_\ce(x)=\nabla W^k_\ce(x^j)+\mathrm{O}(\ep)}\,,
\end{equation*}
 whence, using \eqref{20220216_1} with $a(\cdot):=W^k_\ce(x^j)+\langle\nabla W^k_\ce(x^j),\cdot-x^j\rangle$\,,  {recalling that $|\partial_n\nabla W_\ep^j|\sim|x-x^j|^{-1}$
and $|\nabla^2W_\ep^j(x)|\sim |x-x^j|^{-1}$\,, summing over $k\neq j$\,,} we deduce that
  \begin{equation}\label{dopo}
 \begin{aligned}
&\frac{1-\nu^2}{E}\int_{\partial B_\ce(x^j)} \!\!\!\! (\partial_{n}\Delta W_{\ce}^j)\Big(-\sum_{k\neq j}W^k_\ce\Big) \,\ud\Huno+
  \frac{1+\nu}{E}\nu\int_{\partial B_{\ce}(x^j)} \!\!\!\! \Delta W_{\ce}^j \partial_n\Big(-\sum_{k\neq j}W^k_\ce\Big) \,\ud\Huno\\
&-\frac{1+\nu}{E}\int_{\partial B_{\ce}(x^j)}\Big\langle  \nabla^2 W_{\ce}^j n,\nabla\Big(-\sum_{k\neq j}W^k_\ce\Big) \Big\rangle \,\ud\Huno\\
&+ \frac{1}{2\pi\ce}\int_{\partial B_\ce(x^j)}\Big\langle\nabla\Big(-\sum_{k\neq j}W^k_\ce\Big),\Pi(b^j)\Big\rangle\,\ud\Huno=\mathrm{O}(\ce)\,.
 \end{aligned}
 \end{equation}
By adding \eqref{2201182343} and \eqref{dopo}, in view of \eqref{2201101612}, we get 
\begin{equation}\label{20220214_5}
\begin{aligned}
\frac{1-\nu^2}{E}\int_{\partial B_\ce(x^j)}(\partial_{n}\Delta W_{\ce}^j)\widetilde{w} \,\ud\Huno+
  \frac{1+\nu}{E}\nu\int_{\partial B_{\ce}(x^j)}  \Delta W_{\ce}^j \partial_n\widetilde{w} \,\ud\Huno&\\
-\frac{1+\nu}{E}\int_{\partial B_{\ce}(x^j)}\langle  \nabla^2 W_{\ce}^j n,\nabla\widetilde{w}\rangle \,\ud\Huno
+\frac{1}{2\pi\ce}\int_{\partial B_\ce(x^j)}\big\langle \nabla \widetilde{w},\Pi(b^j)\big\rangle\,\ud\Huno &= \mathrm{O}(\ce)\,.
\end{aligned}
\end{equation}

Now we focus on the case $i\neq j$ in \eqref{20220222_3}\,.
We first notice that, by the Gauss--Green Theorem, for any affine function $a$ there holds
\begin{equation}\label{220210_1}
\begin{aligned}
0=& \int_{B_{\ce}(x^i)}
 \Delta W_{\ce}^j\Delta(-W_{\ce}^i+a)\,\ud x=
 \int_{B_{\ce}(x^i)}{\Delta^2W_{\ce}^j} (-W_{\ce}^i+a)\, \ud x\\
&- \int_{\partial B_{\ce}(x^i)}(\partial_{(-n)}\Delta W_{\ce}^j) (-W_{\ce}^i+a) \,\ud\Huno
+ \int_{\partial B_{\ce}(x^i)}\Delta W_{\ce}^j \partial_{(-n)}(-W_{\ce}^i+a)\,\ud\Huno\\
=&\int_{\partial B_{\ce}(x^i)}(\partial_{n}\Delta W_{\ce}^j) (-W_{\ce}^i+a) \,\ud\Huno- \int_{\partial B_{\ce}(x^i)}\Delta W_{\ce}^j \partial_{n}(-W_{\ce}^i+a)\,\ud\Huno\,,
\end{aligned}
\end{equation}
where the first equality follows from the fact that  $W_\ce^i$ is affine in $B_\ce(x^i)$ whereas the last one is a consequence of $\Delta^2W^j_\ce=0$ in $A_{\ce,R}(x^j)$\,.
Similarly, we have
\begin{equation}\label{220210_2}
\begin{aligned}
0=&
 \int_{B_{\ce}(x^i)}
 \nabla^2W_{\ce}^j:\nabla^2 (-W_{\ce}^i+a) \,\ud x=
 \int_{B_{\ce}(x^i)}\Delta^2W_{\ce}^j(-W_{\ce}^i+a)\, \ud x\\
&- \int_{\partial B_{\ce}(x^i)}(\partial_{(-n)}\Delta W_{\ce}^j) (-W_{\ce}^i+a) \,\ud\Huno
+ \int_{\partial B_{\ce}(x^i)}\langle \nabla^2 W_{\ce}^j   (-n),\nabla (-W_{\ce}^i+a)  \rangle\,\ud\Huno\\
=&\int_{\partial B_{\ce}(x^i)}(\partial_{n}\Delta W_{\ce}^j) (-W_{\ce}^i+a) \,\ud\Huno
- \int_{\partial B_{\ce}(x^i)}\langle \nabla^2 W_{\ce}^j   n,\nabla (-W_{\ce}^i+a)  \rangle\,\ud\Huno\,.
\end{aligned}
\end{equation}
Furthermore, we have 
\begin{equation}\label{220210_3}
\begin{aligned}
&\int_{\partial B_{\ce}(x^i)}(\partial_{n}\Delta W_{\ce}^j)   \Big(  -\sum_{k\neq i} W^k_{\ce} \Big)\,\ud\Huno{=\mathrm{O}(\ep)}\\
&\int_{\partial B_{\ce}(x^i)}\Delta W_{\ce}^j \partial_{n}  \Big(   -\sum_{k\neq i} W^k_{\ce} \Big)\,\ud\Huno {=\mathrm{O}(\ep)}\\
&\int_{\partial B_{\ce}(x^i)}\Big\langle  \nabla^2 W_{\ce}^j n,\nabla  \Big( -\sum_{k\neq i} W^k_{\ce} \Big) \Big\rangle\,\ud\Huno {=\mathrm{O}(\ep)}\,,
\end{aligned}
\end{equation}
since all the integrands are uniformly bounded in $\ce$ and the domain of integration is vanishing.
Therefore, in view of \eqref{2201101612}, by \eqref{220210_1}, \eqref{220210_2}, \eqref{220210_3},
for any function $\widetilde{w}\in\widetilde{\Bb}_{\ce,\Omega}^\alpha$ we have that
\begin{equation*}
\begin{aligned}
-\nu \sum_{i\neq j}\int_{\partial B_{\ce}(x^i) }(\partial_{n}\Delta W_{\ce}^j) \widetilde{w}\,\ud\Huno
+\nu  \sum_{i\neq j}\int_{\partial B_{\ce}(x^i)}\Delta W_{\ce}^j \partial_{n} \widetilde{w}\,\ud\Huno& \\
+ \sum_{i\neq j}\int_{\partial B_{\ce}(x^i)}(\partial_{n}\Delta W_{\ce}^j)\widetilde{w}\,\ud\Huno
- \sum_{i\neq j}\int_{\partial B_{\ce}(x^i)}\langle  \nabla^2 W_{\ce}^j n,\nabla \widetilde{w} \rangle\,\ud\Huno&=\mathrm{O}(\ce)\,,
\end{aligned}
\end{equation*}
which, together with \eqref{20220214_5}, implies \eqref{20220222_3}.

Since the functions $W^j_\ce$ (for every $j=1,\ldots,J$) are uniformly bounded with respect to $\ce$ on $\partial\Omega$, by the standard trace theorem we get 
\begin{equation}\label{20220211_3}
\begin{aligned}
&\,\Bigg|-(1-\nu)\int_{\partial\Omega} (\partial_n\Delta W_\ce^j)\widetilde{w}\,\ud\Huno+\int_{\partial\Omega}\langle\nabla^2W_\ce^j,\nabla\widetilde w\rangle\,\ud\Huno
-\nu\int_{\partial\Omega}\Delta W_\ce^j\partial_n\widetilde{w}\,\ud\Huno\bigg|\\
\le&\, C\|\widetilde{w}\|_{H^1(\partial\Omega)}\le C\|\widetilde{w}\|_{H^2(\Omega_\ce(\alpha))}\,,
\end{aligned}
\end{equation}
where $C>0$ is a constant that does not depend on $\ce$\,.

In view of Lemma \ref{20220222_1}, by \eqref{20220222_3} and \eqref{20220211_3} (summing over $j=1,\ldots,J$), for $\ce$ small enough, we get
\begin{equation}\label{20220211_4}
\begin{aligned}
&\,\bigg|\frac{1+\nu}{E} \sum_{j=1}^J  \int_{\Omega_{\ce}(\alpha)}   
\big( \nabla^2W^j_{\ce}:\nabla^2\widetilde{w}-\nu  \Delta W^j_{\ce}\Delta \widetilde{w}\big)\,\ud x
+\sum_{j=1}^J\frac{1}{2\pi\ce}\int_{\partial B_{\ce}(x^j)}
\langle  \nabla \widetilde{w}, \Pi(b^j)\rangle\,\ud\Huno\bigg|\\
\le&\, C\big(\|\widetilde{w}\|_{H^2(\Omega_\ce(\alpha))}+1\big)\,,
\end{aligned}
\end{equation}
for some constant $C>0$ that does not depend on $\ce$\,.

Now, by applying Proposition \ref{202112230052} with $f=W_\ce^\alpha$ and by the very definition of $\G$ in \eqref{energyairy}, we deduce the existence of two constants $0<C_1<C_2$ independent of $\ce$ (but depending on $\alpha$ and $\Omega$) such that
\begin{equation}\label{20220214_1}
C_1\big(\|\widetilde{w}\|^2_{H^2(\Omega_\ce(\alpha))}-{\|W_\ce^\alpha\|^2_{L^\infty(\partial\Omega)}-\|\nabla W_\ce^\alpha\|^2_{L^\infty(\partial\Omega)}\big)}\le \G(\widetilde{w};\Omega_\ce(\alpha))\le C_2\|\widetilde{w}\|^2_{H^2(\Omega_\ce(\alpha))}\,,
\end{equation}
for every $\widetilde w\in \widetilde{\Bb}_{\ce,\Omega}^\alpha$\,. Therefore, by \eqref{20220211_4} and 
\eqref{20220214_1}, we deduce \eqref{2201182011}.
 By \eqref{2201182011}, existence and uniqueness of the solution $\widetilde{w}_\ce^\alpha$ to the minimization problem \eqref{2112192003} for $\ce>0$ small enough follows by the direct method in the Calculus of Variations. 
Furthermore, by \eqref{2201182011} and by Proposition \ref{2201202252} applied with $f=W_\ce^\alpha$ and $f^j=\sum_{i\neq j}W_\ce^i$\,, we have that
\begin{equation}\label{20220214_2}
C'\|\widetilde{w}_{\ce}^\alpha\|^2_{H^2(\Omega)}\le\widetilde{\I}_\ce^\alpha(\widetilde{w}_{\ce}^\alpha)+C''\,,
\end{equation}
for some constants $C',C''>0$ independent of $\ce$ (but depending on $\alpha$ and $\Omega$).
Hence, in order to conclude the proof it is enough to construct (for $\ce$ small enough) a competitor function $\widehat{w}^\alpha_\ce\in\widetilde{\Bb}_{\ce,\Omega}^\alpha$ such that
\begin{equation}\label{20220214_3}
\widetilde{\I}_\ce^\alpha(\widehat{w}_\ce^\alpha)\le C
\end{equation}
for some constant $C>0$ independent of $\ce$\,.

We construct $\widehat{w}_\ep^\alpha$ as follows. 
{Recalling the definition of $D$ in \eqref{distaminima},}
for every $j=1,\dots,J$\,, we consider $\varphi^j\in C^{\infty}(\Omega)$ be such that
$\varphi^j\equiv 0$ on $\overline{B}_{\frac D 4}(x^j)$\,,
$\varphi^j\equiv 1$ on $\Omega_{\frac D 2}(\alpha)$\,,
and $|\nabla\ffi(x)|\le \frac{C}{|x-x^j|}$ for every $x\in A_{\frac{D}{4},\frac{D}{2}}(x^j)$\,;
for every $\ce$ small enough\,, we define $\widehat{w}^\alpha_\ce\colon \Omega\to\R$ as
$$
\widehat{w}^\alpha_\ce\coloneqq-\sum_{i=1}^J \varphi^j W^j_{\ce}.
$$
By construction,
$$
\widehat{w}^\alpha_\ce+W^\alpha_\ce=\sum_{j=1}^J(1-\ffi^j)W^j_{\ce}\in\widetilde{\Bb}_{\ce,\Omega}^\alpha
$$
and
\begin{equation}\label{20220214_4}
\|\widehat{w}^\alpha_\ce\|_{H^2(\Omega_{\ce}(\alpha))}\le
\|\widehat{w}^\alpha_\ce\|_{H^2(\Omega)}\le\sum_{j=1}^J\|\ffi^jW_\ce^j\|_{H^2\big({A}_{\frac D 4,R}(x^j)\big)}\le C\,,
\end{equation}
for some constant $C>0$ independent of $\ce$ (but possibly depending on $\alpha$ and on $R$). By \eqref{2201182011} and \eqref{20220214_4} we obtain \eqref{20220214_3} and this concludes the proof.
%
\end{proof}

\begin{proof}[Proof of Proposition \ref{2201252355}]
%
We preliminarily notice that, since $\G$ is lower semicontinuous with respect to the weak $H^2$ convergence, {\eqref{2201141838}} yields
\begin{equation}\label{20220221_1}
\G(\widetilde{w}_0^\alpha;\Omega)\le\liminf_{\ce\to 0}\G(\widetilde{w}_\ce^\alpha;\Omega_\ce(\alpha))\,,
\end{equation}
and hence
\begin{equation}\label{2201192250}
\widetilde{\I}_{0}^\alpha(\widetilde{w}_0^\alpha)\le\liminf_{\ce\to 0}\widetilde{\I}_{\ce}^\alpha(\widetilde{w}_\ce^\alpha)\,.
\end{equation}
Here we have used that the boundary integrals on $\partial B_\ce(x^j)$ vanish as $\ce\to 0$ in view of \eqref{20220222_3}, and that, 
by compactness of the trace operator \cite[Theorem~6.2, page~103]{Necas11} (see also Remark~\ref{maybeuseful}), as $\ce\to 0$\,,
\begin{equation}\label{numerata1606}
\begin{aligned}
\int_{\partial\Omega}(\partial_{n}\Delta W_{\ce}^j)\widetilde{w}_\ce^\alpha \,\ud\Huno\to& \int_{\partial\Omega}(\partial_{n}\Delta W_{0}^j)\widetilde{w}_0^\alpha \,\ud\Huno\\
\int_{\partial\Omega}\langle \nabla^2 W_{\ce}^j   n,\nabla \widetilde{w}_\ce^\alpha \rangle \,\ud\Huno\to& \int_{\partial\Omega}\langle \nabla^2 W_{0}^j   n,\nabla \widetilde{w}_0^\alpha \rangle \,\ud\Huno\\
\int_{\partial\Omega}\Delta W_{\ce}^j \partial_{n}\widetilde{w}_\ce^\alpha \,\ud\Huno\to& \int_{\partial\Omega}\Delta W_{0}^j \partial_{n}\widetilde{w}_0^\alpha \,\ud\Huno\,.
\end{aligned}
\end{equation}


Moreover, by Proposition \ref{prop:approx}
for every $\widehat{w}_0\in\widetilde{\Bb}_{0,\Omega}^\alpha$
 there exists a sequence $\{\widehat{w}_\ce\}_\ce\subset H^2(\Omega)$ with $\widehat{w}_\ce\in\widetilde{\Bb}_{\ce,\Omega}^\alpha$ (for every $\ce>0$) such that $\widehat{w}_\ce\to \widehat{w}_0$ strongly in $H^2(\Omega)$\,. It follows that
 \begin{equation}\label{20220222_6}
\widetilde{\mathcal{I}}_{0}^\alpha(\widehat w_0)=\lim_{\ce\to 0}\widetilde{\mathcal{I}}_{\ce}^\alpha(\widehat w_\ce)\,,
\end{equation}
which, by the minimality of $\widetilde{w}_{\ce}^\alpha$ and in view of \eqref{2201192250}, gives
\begin{equation}\label{20220222_6}
\widetilde{\I}_{0}^\alpha(\widehat{w}_0)=\lim_{\ce\to 0}\widetilde{\mathcal{I}}_{\ce}^\alpha(\widehat w_\ce)\ge\limsup_{\ce\to 0}\widetilde{\I}_{\ce}^\alpha(\widetilde{w}_\ce^\alpha)\ge \widetilde{\I}_{0}^\alpha(\widetilde{w}_0^\alpha)\,.
\end{equation}
It follows that $\widetilde{w}_0^\alpha$ is a minimizer of $\widetilde{\I}_0^\alpha$ in $\widetilde{\Bb}_{0,\Omega}^\alpha$\,. 
By convexity (see \eqref{strictconv}), such a minimizer is unique and, by computing the first variation of $\widetilde{\I}_{0}^\alpha$ in $\widetilde{w}_0^\alpha$\,, we have that it satisfies \eqref{20220419}.
Furthermore, by applying \eqref{20220222_6} with $\widehat{w}_0=\widetilde{w}_0^\alpha$ we get \eqref{20220222_7}.


%
Finally, we discuss the strong convergence of $\widetilde{w}_{\ce}^\alpha$ in the compact subsets of of $\Omega\setminus\bigcup_{j=1}^J\{x^j\}$\,.
To this purpose, we preliminarily notice that, from \eqref{20220222_7}, \eqref{20220222_3}, and \eqref{numerata1606}, we have that
\begin{equation*}
\lim_{\ce\to 0}\G(\widetilde{w}^\alpha_\ce;\Omega_\ce(\alpha))=\G(\widetilde{w}^\alpha_0;\Omega)\,.
\end{equation*}
We now want to show that for every (fixed) $r>0$
\begin{equation}\label{440}
\int_{\Omega_{r}(\alpha)}
|\nabla^2\widetilde{w}^\alpha_{\ce}- \nabla^2 \widetilde{w}^\alpha_0 |^2
\,\ud x\to 0\qquad\text{as $\ce\to 0$\,.}
\end{equation}
To this purpose, we will use the weak convergence \eqref{2201141838} and Remark~\ref{equinorm}; we start by observing that
\begin{equation*}
\begin{aligned}
&\int_{\Omega_{r}(\alpha)}
|\nabla^2\widetilde{w}^\alpha_{\ce}- \nabla^2 \widetilde{w}_0^\alpha |^2\,\ud x
-\nu\int_{\Omega_{r}(\alpha)}
|\Delta\widetilde{w}_{\ce}^\alpha- \Delta \widetilde{w}^\alpha_0 |^2
\,\ud x
\\
=&\int_{\Omega_{r}(\alpha)} \!\!\! \big(
|\nabla^2\widetilde{w}_{\ce}^\alpha |^2+| \nabla^2 \widetilde{w}_0^\alpha |^2-2
\nabla^2 \widetilde{w}_0^\alpha:\nabla^2\widetilde{w}^\alpha_{\ce}\big)\,\ud x 
-\nu\int_{\Omega_{r}(\alpha)} \!\!\! \big(
|\Delta\widetilde{w}_{\ce}^\alpha |^2+| \Delta \widetilde{w}^\alpha_0 |^2-2
\Delta \widetilde{w}_0^\alpha \Delta\widetilde{w}_{\ce}^\alpha\big)\,\ud x\,,
\end{aligned}
\end{equation*}
whence, thanks to the convergence
\eqref{2201141838},
we  deduce
\begin{equation}\label{20220225_1}
\int_{\Omega_{r}(\alpha)}
|\nabla^2\widetilde{w}_{\ce}^\alpha- \nabla^2 \widetilde{w}^\alpha_0 |^2
 \,\ud x
-\nu\int_{\Omega_{r}(\alpha)}
|\Delta\widetilde{w}^\alpha_{\ce}- \Delta \widetilde{w}_0^\alpha |^2
 \,\ud x \to 0\,.
 \end{equation}
Since (see the first inequality in \eqref{quadr})
\begin{equation*}
c(\nu)
\int_{\Omega_{r}(\alpha)}
|\nabla^2\widetilde{w}^\alpha_{\ce}- \nabla^2 \widetilde{w}_0^\alpha |^2
\,\ud x
\le
\int_{\Omega_{r}(\alpha)}
|\nabla^2\widetilde{w}^\alpha_{\ce}- \nabla^2 \widetilde{w}_0^\alpha |^2
 \,\ud x
-\nu\int_{\Omega_{r}(\alpha)}
|\Delta\widetilde{w}^\alpha_{\ce}- \Delta\widetilde{w}_0^\alpha |^2
\,\ud x
\end{equation*}
for some constant $c(\nu)>0$ depending only on $\nu$\,, by \eqref{20220225_1},
we get \eqref{440}.
Finally, by \eqref{2201141838}, we get that $\widetilde{w}_\ce^\alpha$ converges strongly in $H^1(\Omega)$, as $\ce\to0$, to $\widetilde{w}_0^\alpha$\,,
which together with 
\eqref{440}, implies that
\begin{equation}\label{20220225_3}
\widetilde{w}_\ce^\alpha\to \widetilde{w}_0^\alpha\qquad\textrm{ strongly in }H^{2}(\Omega_r(\alpha))\,.
\end{equation}   
In conclusion, for any compact set $K\subset\Omega\setminus\bigcup_{j=1}^J\{x^j\}$\,, there exists $r>0$ such that $K\subset\Omega_{r}(\alpha)$\,, which, in view of  \eqref{20220225_3}, implies the claim and concludes the proof of the  proposition.
\end{proof}

We are in a position to discuss the asymptotic expansion of energies and to classify each term of the expansion. 
\begin{theorem}\label{CLequiv}
For every $\ce>0$ small enough, let $w_\ce^\alpha$ be the minimizer of $\I_\ce^\alpha$ in $\Bb_{\ce,\Omega}^\alpha$\,. 
Then we have
\begin{equation}\label{20220222_8}
\I_\ce^\alpha(w_\ce^\alpha)=-\frac{E}{1-\nu^2} \sum_{j=1}^J 
\frac{|b_j|^2}{8\pi}|\log\ce|
+F(\alpha)+f(D,R;\alpha)+\omega_\ce\,,
\end{equation}
where $\omega_\ce\to 0$ as $\ce\to 0$\,,
\begin{equation}\label{20220224_1}
\begin{aligned}
f(D,R;\alpha)=\sum_{j=1}^J\frac{|b^j|^2}{8\pi}\frac{E}{1-\nu^2}\bigg(&\,2+\frac{D^2}{R^2}\Big(\frac{D^2}{R^2}-2\Big)-2\log R\\
&\,+\frac{1}{4(1-\nu)}\frac{D^2}{R^2}\Big(\frac{R^2}{D^2}-1\Big)\Big(\frac{D^2}{R^2}\Big(\frac{R^2}{D^2}+1\Big)-2\Big)\bigg)\,,
\end{aligned}
\end{equation}
(recall \eqref{distaminima} for the definition of $D$) and 
\begin{equation}\label{ren_en_dislo}
F(\alpha)=
F^{\mathrm{self}}(\alpha)+
F^{\mathrm{int}}(\alpha)+
F^{\mathrm{elastic}}(\alpha)
\end{equation}
is the renormalized energy defined by
\begin{equation}\label{2201201755}
F^{\mathrm{self}}(\alpha)\coloneqq\sum_{j=1}^J \mathcal{G}( W^j_{0};\Omega_{D}(\alpha))+
\frac{E}{1-\nu^2} \sum_{j=1}^J 
\frac{{|b_j|^2}}{8\pi}\log D\,,
\end{equation}
\begin{equation}\label{2201260016}
\begin{aligned}
F^{\mathrm{int}}(\alpha)\coloneqq&\frac{1+\nu}{E}\sum_{j=1}^J\sum_{k\neq j} 
\Big(  -(1-\nu)\int_{\partial\Omega}(\partial_{n}\Delta{ W_{0}^j}){W^k_0} \,\ud\Huno
\\
&\phantom{\frac{1+\nu}{E}\sum_{j=1}^J\sum_{k\neq j} }+ \int_{\partial\Omega}\langle \nabla^2 {W_{0}^j }  n,\nabla {W^k_0}\rangle\,\ud\Huno
-\nu \int_{\partial\Omega}\Delta {W_{0}^j} \partial_{n}{W^k_0} \,\ud\Huno\Big)\,,
\end{aligned}
\end{equation}
\begin{equation}\label{2201221615}
F^{\mathrm{elastic}}(\alpha)\coloneqq\widetilde{\I}_0^\alpha(\widetilde{w}_0^\alpha)\,.
\end{equation}
\end{theorem}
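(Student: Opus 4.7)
The plan is to start from the additive decomposition $w_\ce^\alpha = W_\ce^\alpha + \widetilde w_\ce^\alpha$ in \eqref{2201181904} and from identity \eqref{20220223_2}, which splits the total energy as
\begin{equation*}
\I_\ce^\alpha(w_\ce^\alpha) = \mathcal{G}(W_\ce^\alpha;\Omega_\ce(\alpha)) + \sum_{j=1}^J \frac{1}{2\pi\ce}\int_{\partial B_\ce(x^j)} \langle \nabla W_\ce^\alpha,\Pi(b^j)\rangle\,\ud\Huno + \widetilde{\I}_\ce^\alpha(\widetilde w_\ce^\alpha).
\end{equation*}
By \eqref{20220222_7} in Proposition \ref{2201252355}, the last summand converges to $\widetilde{\I}_0^\alpha(\widetilde w_0^\alpha) = F^{\mathrm{elastic}}(\alpha)$ as $\ce\to0$, so the elastic contribution is already under control and only the purely plastic terms remain to be expanded.

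Since $W_\ce^\alpha = \sum_{j=1}^J W_\ce^j$, I would expand the remaining quantities by bilinearity of the form $\langle\cdot,\cdot\rangle_{\G_{\Omega_\ce(\alpha)}}$ from \eqref{semiprod} and separate the self-contributions $(j=k)$ from the interaction contributions $(j\neq k)$. For the interactions, since $W_\ce^j$ is smooth in a neighborhood of $x^k$ whenever $k\neq j$ and converges in $C^\infty$ to $W_0^j$ away from $x^j$ (Remark \ref{maybeuseful}), I apply Gauss--Green as in Lemma \ref{20220222_1} with the test function $\widetilde w$ replaced by $W_\ce^k$. The bulk integrals become boundary terms on $\partial\Omega$, which pass to the limit and reproduce exactly the three integrals in the definition \eqref{2201260016} of $F^{\mathrm{int}}(\alpha)$, plus small-ball boundary integrals on $\partial B_\ce(x^i)$; the latter either vanish as $\ce\to0$ (by smoothness on shrinking domains) or cancel against the correctors $\frac{1}{2\pi\ce}\int_{\partial B_\ce(x^j)}\langle\nabla W_\ce^k,\Pi(b^j)\rangle\,\ud\Huno$ via identity \eqref{20220216_1}, exactly as in the proof of Proposition \ref{2201181930}.

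For the self-contributions I would partition $\Omega_\ce(\alpha) = \Omega_D(\alpha) \cup A_{\ce,D}(x^j) \cup \bigcup_{i\neq j} A_{\ce,D}(x^i)$. On $\Omega_D(\alpha)$ the smooth convergence $W_\ce^j \to W_0^j$ yields $\mathcal{G}(W_0^j;\Omega_D(\alpha)) + \mathrm{o}(1)$; on the distant annuli $A_{\ce,D}(x^i)$ with $i\neq j$ the integrand is uniformly bounded while the domain shrinks, so the contribution is $\mathrm{o}(1)$. The crux is the explicit computation of
\begin{equation*}
\mathcal{G}(W_\ce^j; A_{\ce,D}(x^j)) + \frac{1}{2\pi\ce}\int_{\partial B_\ce(x^j)}\langle \nabla W_\ce^j,\Pi(b^j)\rangle\,\ud\Huno,
\end{equation*}
for which I use the explicit formula \eqref{2201181926}--\eqref{abc} for $W_{0,\ce}^s$ combined with the rotation--reduction \eqref{vudoppiob} (invoked through Corollary \ref{insec4}). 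A direct integration over the annulus, after reducing to the scalar radial/angular profile, isolates the singular part $-\frac{E}{1-\nu^2}\frac{|b^j|^2}{8\pi}|\log\ce|$ and a finite remainder whose $D$-dependent piece $\frac{E}{1-\nu^2}\frac{|b^j|^2}{8\pi}\log D$ combines with $\mathcal{G}(W_0^j;\Omega_D(\alpha))$ to produce $F^{\mathrm{self}}(\alpha)$, while the residual depends only on $(D,R,b^j)$ and, once summed over $j$, reproduces the explicit function $f(D,R;\alpha)$ in \eqref{20220224_1}.

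I expect the main obstacle to be the bookkeeping of the many boundary terms produced in the interaction step: the raw Gauss--Green application distributes contributions across $\partial\Omega$ and each $\partial B_\ce(x^i)$, and only after carefully pairing the small-ball contributions with the correctors and exploiting \eqref{20220216_1} does one see that the $\partial B_\ce(x^i)$ terms contribute only $\mathrm{o}(1)$ while the $\partial\Omega$ terms reproduce exactly $F^{\mathrm{int}}(\alpha)$. The explicit algebraic computation of $f(D,R;\alpha)$ from \eqref{2201181926} and \eqref{abc} is conceptually routine but requires careful tracking of the coefficients $\alpha_\ce,\beta_\ce,\gamma_\ce$ so that one can identify the ``$\log D$'' renormalization that is absorbed in $F^{\mathrm{self}}(\alpha)$ and isolate the residual $f(D,R;\alpha)$, which, consistently with the statement, depends on the auxiliary length scales $D$ and $R$ but not on $\ce$.
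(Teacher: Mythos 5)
Your overall route is the one the paper follows: the splitting \eqref{20220223_2} of $\I_\ce^\alpha(w_\ce^\alpha)$ into a plastic part and $\widetilde{\I}_\ce^\alpha(\widetilde w_\ce^\alpha)$, the passage to the limit in the elastic part via \eqref{20220222_7}, the separation of the plastic energy into self and cross terms, the Gauss--Green argument combined with \eqref{20220216_1}, \eqref{220210_1} and \eqref{220210_3} for the interaction part, and Corollary~\ref{insec4} together with Remark~\ref{maybeuseful} for the core annulus around each $x^j$ --- all of this coincides with the actual proof, and your identification of where $\log D$ is absorbed into $F^{\mathrm{self}}(\alpha)$ and where $f(D,R;\alpha)$ comes from is correct.

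There is, however, one concrete step that fails as you justify it. In the self part you partition $\Omega_\ce(\alpha)=\Omega_D(\alpha)\cup A_{\ce,D}(x^j)\cup\bigcup_{i\neq j}A_{\ce,D}(x^i)$ and claim that $\G(W_\ce^j;A_{\ce,D}(x^i))=\mathrm{o}(1)$ for $i\neq j$ ``because the integrand is bounded and the domain shrinks''. The domain does not shrink: as $\ce\to0$ the annulus $A_{\ce,D}(x^i)$ increases to the punctured ball $B_D(x^i)\setminus\{x^i\}$, and since $W_\ce^j\to W_0^j$ in $C^\infty$ near $x^i$ (Remark~\ref{maybeuseful}), this term converges to $\G(W_0^j;B_D(x^i))$, a finite and in general strictly positive quantity --- not an error term. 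What genuinely vanishes around the foreign singularities are the one-dimensional integrals over the circles $\partial B_\ce(x^i)$, whose length is $2\pi\ce$ (this is precisely how \eqref{220210_3} is used in the interaction part); you have transferred that reasoning to two-dimensional sets of fixed outer radius. As written, your bookkeeping therefore misplaces the order-one contribution $\sum_{j}\sum_{i\neq j}\G(W_0^j;B_D(x^i))$ in the expansion. To repair the self-term computation you must either keep these annuli inside the region that is handled by smooth convergence (so that their limits are retained in the finite part of the self energy, and only the core annulus $A_{\ce,D}(x^j)$ is computed explicitly via Corollary~\ref{insec4}, in the spirit of \eqref{20220223_5}), or account for their limits $\G(W_0^j;B_D(x^i))$ explicitly in the constant term; they cannot be discarded by a shrinking-domain argument.
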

 \begin{remark}
\rm{Notice that $F^{\mathrm{self}}(\alpha)$ is independent of $D$\, 
 as it can be verified by a simple computation.
 }
 \end{remark}

\begin{proof}
By \eqref{2201181904} and \eqref{20220223_2},  we have that
$w_\ce^\alpha=W_\ce^\alpha+\widetilde{w}_\ce^\alpha$\,, where $W_\ce^\alpha$ is defined in \eqref{20220223_1} and $\widetilde{w}_\ce^\alpha$ is the unique minimizer of $\widetilde{\I}_\ce^\alpha$ in $\widetilde{\Bb}_{\ce,\Omega}^\alpha$\,\ provided by Proposition~\ref{2201252355}. 
Notice that
\begin{equation}\label{20220223_3}
\begin{aligned}
&\,\G(W_\ce^\alpha;\Omega_\ce(\alpha))+\sum_{j=1}^J\frac{1}{2\pi\ce}\int_{\partial B_\ce(x^j)}\langle\nabla W_\ce^\alpha,\Pi(b^j)\rangle\,\ud\Huno\\
=&\,\sum_{j=1}^J\Big(\G(W_\ce^j;\Omega_\ce(\alpha))+\frac1{2\pi\ce}\int_{\partial B_\ce(x^j)}\langle\nabla W_\ce^j,\Pi(b^j)\rangle\,\ud\Huno\Big)\\
&\,+\sum_{j=1}^J\sum_{k\neq j}\bigg(\frac{1+\nu}{E} \!\! \int_{\Omega_\ce(\alpha)} \!\!\! \Big(\nabla^2 W_\ce^j:\nabla^2W_\ce^k-\nu\Delta W_\ce^j\Delta W_\ce^k\Big)\,\ud x\\
&\,+\frac1{2\pi\ce}\int_{\partial B_\ce(x^j)} \!\!\!\! \langle\nabla W_\ce^k,\Pi(b^j)\rangle\,\ud\Huno\bigg)\\
\eqqcolon&\, F^{\mathrm{self}}_\ce(\alpha)+F^{\mathrm{int}}_\ce(\alpha)\,.
\end{aligned}
\end{equation}
We notice that, for every $j=1,\ldots,J$ and for every $0<\ce<r\le D$ with $\ce<1$
\begin{equation}\label{20220223_5}
\begin{aligned}
&\,\G(W^j_\ce;\Omega_\ce(\alpha))+\frac{1}{2\pi\ce}\int_{\partial B_\ce(x^j)}\langle\nabla W_\ce^j,\Pi(b^j)\rangle\,\ud\Huno\\
=&\,\G(W^j_\ce;\Omega_r(\alpha))+\G(W^j_\ce;A_{\ce,r}(x^j))+\frac{1}{2\pi\ce}\int_{\partial B_\ce(x^j)}\langle\nabla W_\ce^j,\Pi(b^j)\rangle\,\ud\Huno\,.
\end{aligned}
\end{equation}
Furthermore, by Corollary~\ref{insec4}, we have that
\begin{multline}\label{20220224_2}
\G(W_\ce^j; A_{\ce,r}(x^j))+\frac{1}{2\pi\ce}\int_{\partial B_\ce(x^j)}\langle\nabla W_\ce^j,\Pi(b^j)\rangle\,\ud\Huno
=\,-\frac{|b^j|^2}{8\pi}\frac{E}{1-\nu^2}\log\frac{1}{\ce}\\
+\frac{|b^j|^2}{8\pi}\frac{E}{1-\nu^2}\log r+f_\ce(r,R;|b^j|)\,,
\end{multline}
where $f_\ce(r,R;|b^j|)$ is defined in \eqref{vanerr}.

%
Notice moreover that $f_{\ce}(r,R;|b^j|)\to f(r,R;|b^j|)$ (as $\ce\to 0$) with $f(r,R;|b^j|)$ defined by 
\begin{equation}\label{20220224_3}
\begin{aligned}
f(r,R;|b^j|)\coloneqq&\frac{|b^j|^2}{8\pi}\frac{E}{1-\nu^2}\Big(2+\frac{r^2}{R^2}\Big(\frac{r^2}{R^2}-2\Big)-2\log R\Big)\\
&+\frac{|b^j|^2}{32\pi}\frac{E}{(1-\nu)^2(1+\nu)}\frac{r^2}{R^2}\Big(\frac{R^2}{r^2}-1\Big)\Big(\frac{r^2}{R^2}\Big(\frac{R^2}{r^2}+1\Big)-2\Big)\,.
\end{aligned}
\end{equation}
By Remark \ref{maybeuseful}, summing over $j=1,\ldots,J$ formulas \eqref{20220223_5}, \eqref{20220224_2} and \eqref{20220224_3}, for $r= D$ we obtain
\begin{equation}\label{20220224_4}
F^{\mathrm{self}}_\ce(\alpha)=-\sum_{j=1}^J\frac{|b^j|^2}{4\pi}\frac{E}{1-\nu^2}|\log\ce|+F^{\mathrm{self}}(\alpha)+f(D,R;\alpha)+\omega_\ce\,,
\end{equation}
where $\omega_\ce\to 0$ as $\ce\to 0$ and $f(D,R;\alpha):=\sum_{j=1}^Jf(D,R;|b^j|)$\,.

We now focus on $F_\ce^{\mathrm{int}}(\alpha)$\,.
By arguing as in the proof of Lemma \ref{20220222_1}, for every $j,k=1,\ldots,J$ with $k\neq j$\,, we have that
\begin{equation*}
\begin{aligned}
&\int_{\Omega_\ce(\alpha)}\Big(\nabla^2 W_\ce^j:\nabla^2W_\ce^k-\nu\Delta W_\ce^j\Delta W_\ce^k\Big)\,\ud x\\
=&-(1-\nu)\int_{\partial\Omega}(\partial_{n}\Delta W_{\ce}^j){W}_\ce^k \,\ud\Huno+\int_{\partial\Omega}\langle \nabla^2 W_{\ce}^j   n,\nabla W_\ce^k \rangle \,\ud\Huno-\nu \int_{\partial\Omega}\Delta W_{\ce}^j \partial_{n}W_\ce^k \,\ud\Huno\\
&+(1-\nu)\sum_{i=1}^J\int_{\partial B_\ce(x^i)}(\partial_{n}\Delta W_{\ce}^j)W_\ce^k \,\ud\Huno\\
&- \sum_{i=1}^J\int_{\partial B_{\ce}(x^i)}\langle  \nabla^2 W_{\ce}^j n,\nabla W_\ce^k \rangle \,\ud\Huno+\nu\sum_{i=1}^J\int_{\partial B_{\ce}(x^i)}\Delta W_{\ce}^j \partial_{n} W_\ce^k \,\ud\Huno\,,
\end{aligned}
\end{equation*}
which, in view of \eqref{20220216_1}, \eqref{220210_1}, and \eqref{220210_3}, and using Remark~\ref{maybeuseful}, implies
\begin{equation}\label{20220223_4}
F^{\mathrm{int}}_\ce(\alpha)=F^{\mathrm{int}}(\alpha)+\omega_\ce\,,
\end{equation}
where $\omega_\ce\to 0$ as $\ce\to 0$\,.

Finally, by \eqref{20220223_3}, \eqref{20220223_5}, \eqref{20220224_4}, and \eqref{20220223_4}, we get
\begin{equation*}
\begin{aligned}
&\G(W_\ce^\alpha;\Omega_\ce(\alpha))+\sum_{j=1}^J\frac{1}{2\pi\ce}\int_{\partial B_\ce(x^j)}\langle\nabla W_\ce^\alpha,\Pi(b^j)\rangle\,\ud\Huno\\
=&-\sum_{j=1}^J\frac{|b^j|^2}{4\pi}\frac{E}{1-\nu^2}|\log\ce|+F^{\mathrm{self}}(\alpha)+f(D,R;\alpha)+F^{\mathrm{int}}(\alpha)+\omega_\ce\,,
\end{aligned}
\end{equation*}
which, by \eqref{20220223_2} together with Propositions \ref{2201181930} and \ref{2201252355}, allows us to conclude the proof.
\end{proof}


We conclude by showing, via a diagonal argument, that the asymptotic behavior in Theorem~\ref{CLequiv} remains valid also for systems of disclination dipoles,
that is, when the finite system 
$\alpha\in\ED(\Omega)$ of edge dislocations is replaced with the approximating system of disclination dipoles.
\begin{theorem}\label{diago}
Let $J\in\N$, let $b^1,\ldots,b^J\in\R^2\setminus\{0\}$, and let $x^1,\ldots,x^J$ be distinct points in~$\Omega$\,.
For every $h>0$\,, let $\theta_h\in\WD(\Omega)$ be the measure defined in \eqref{thetahJ}.
Then, 
\begin{equation}\label{eovvia}
\theta_h\weakstar\alpha\coloneqq\sum_{j=1}^Jb^j\de_{x^j}\in\ED(\Omega)\qquad \text{as $h\to 0$\,.}
\end{equation}
Let $D>0$ be as in \eqref{distaminima}; for every $0<h<\ep<D$ let $w^{\theta_h}_{h,\ep}$ be the unique minimizer in $\Bb_{\ep,\Omega}^\alpha$ of the functional $\mathcal{I}_{h,\ep}^{\theta_h}$ defined in \eqref{2202231647}. 
Then there exists a function $\ep\colon\R^+\to\R^+$  with $\ep(h)>h$ and $\ep(h)\to 0$ as $h\to 0$ such that $w_{h,\ep(h)}^{\theta_h}\to w_0^\alpha$ in $H^2_\loc\big(\Omega\setminus\bigcup_{j=1}^J\{x^j\}\big)$ as $h\to 0$\,, where $w_0^\alpha$ is the function provided by Theorem \ref{2201181928}.
Moreover,
\begin{equation}\label{enerinormafine}
\mathcal{I}_{h,\ep(h)}^{\theta_h}\big(w_{h,\ep(h)}^{\theta_h}\big)=-\frac{E}{1-\nu^2} \sum_{j=1}^J 
\frac{|b_j|^2}{8\pi}|\log\ce(h)|
+F(\alpha)+f(D,R;\alpha)+\omega_{h}\,,
\end{equation}
where $F(\alpha)$ and $f(D,R;\alpha)$ are defined in \eqref{ren_en_dislo} and \eqref{20220224_1}, respectively, and $\omega_h\to 0$ as $h\to 0$\,.
\end{theorem}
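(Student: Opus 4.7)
The convergence \eqref{eovvia}, read through the division by $h$ already built into the difference quotients of $\mathcal{I}^{\theta_h}_{h,\ce}$, follows immediately from the definitions: a direct computation gives $\theta_h/h \to -\sum_{j=1}^J |b^j|\partial_{(b^j)^\perp/|b^j|}\de_{x^j}$ in $\Dcal'(\Omega)$, which is precisely the source term on the right-hand side of \eqref{limsol} (cf.\ also \eqref{incfinal2}). The main content of the theorem is proved by a diagonal argument that couples two limits already established in the paper: for fixed $\ce \in (0,D)$ the $h \to 0$ convergence of Remark~\ref{2202211829}, and for $\ce \to 0$ the asymptotic analysis of Theorems~\ref{2201181928} and~\ref{CLequiv}. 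Explicitly, for every $\ce \in (0,D)$, Remark~\ref{2202211829} yields $w_{h,\ce}^{\theta_h} \to w_\ce^\alpha$ strongly in $H^2(\Omega)$ as $h \to 0$, where $w_\ce^\alpha = w_{0,\ce}^\alpha$ is the minimizer of $\mathcal{I}_\ce^\alpha$ in $\Bb_{\ce,\Omega}^\alpha$; repeating the final part of the proof of Proposition~\ref{convhtozero} (strong $H^2$-convergence in the bulk term, plus affinity of $w_{h,\ce}^{\theta_h}$ on each $B_\ce(x^j)$ for the surface terms) also gives $\mathcal{I}_{h,\ce}^{\theta_h}(w_{h,\ce}^{\theta_h}) \to \mathcal{I}_\ce^\alpha(w_\ce^\alpha)$.

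Fix a compact exhaustion $\{K_n\}$ of $\Omega \setminus \bigcup_{j=1}^J \{x^j\}$ and a sequence $\ce_n \downarrow 0$. The preceding convergences together with Theorem~\ref{2201181928} allow us to choose $h_n \in (0,\ce_n)$, with $h_n \downarrow 0$, such that
\begin{equation*}
\|w_{h,\ce_n}^{\theta_h} - w_{\ce_n}^\alpha\|_{H^2(K_n)} + \big|\mathcal{I}_{h,\ce_n}^{\theta_h}(w_{h,\ce_n}^{\theta_h}) - \mathcal{I}_{\ce_n}^\alpha(w_{\ce_n}^\alpha)\big| \le \tfrac{1}{n} \quad\text{for every } h \in (0,h_n].
\end{equation*}
Set $\ce(h) \coloneqq \ce_n$ for $h \in (h_{n+1}, h_n]$, so that $\ce(h) > h$ and $\ce(h) \to 0$ as $h \to 0$. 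Combining $w_{\ce(h)}^\alpha \to w_0^\alpha$ in $H^2_{\loc}(\Omega\setminus\bigcup_j\{x^j\})$ (Theorem~\ref{2201181928}) with the diagonal estimate above, the triangle inequality yields $w_{h,\ce(h)}^{\theta_h} \to w_0^\alpha$ in the same topology. Plugging $\ce = \ce(h)$ into \eqref{20220222_8} and writing
\begin{equation*}
\omega_h \coloneqq \big(\mathcal{I}_{h,\ce(h)}^{\theta_h}(w_{h,\ce(h)}^{\theta_h}) - \mathcal{I}_{\ce(h)}^\alpha(w_{\ce(h)}^\alpha)\big) + \omega_{\ce(h)}
\end{equation*}
reproduces \eqref{enerinormafine}; both summands vanish as $h \to 0$, the first by the diagonal construction and the second by Theorem~\ref{CLequiv}.

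The principal subtlety is that the dominant term $|\log\ce(h)|$ of the expansion diverges as $h \to 0$, so the speed at which $\ce(h) \to 0$ must be simultaneously compatible with the (a priori unquantified) rate of the fixed-$\ce$ convergence and with the geometric constraint $\ce(h) > h$. The diagonal construction sidesteps this by selecting $h_n$ small enough \emph{given} $\ce_n$, rather than inverting a quantitative rate; no explicit control on the speed of either individual convergence is needed.
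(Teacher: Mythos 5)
Your proof is correct and follows essentially the same diagonal argument as the paper: for fixed core radius you invoke the $h\to 0$ convergence of Remark~\ref{2202211829} and then couple it, via a triangle inequality, with the $\ce\to 0$ results of Theorems~\ref{2201181928} and~\ref{CLequiv}; the paper selects $h(\ce)$ below a threshold depending on $\ce$ and inverts a strictly monotone map, whereas you fix a sequence $\ce_n$ and take $\ce(h)$ piecewise constant, an immaterial difference. Your explicit inclusion of the energy closeness $\big|\mathcal{I}_{h,\ce_n}^{\theta_h}(w_{h,\ce_n}^{\theta_h})-\mathcal{I}_{\ce_n}^{\alpha}(w_{\ce_n}^{\alpha})\big|\le \tfrac1n$ in the diagonal selection is a welcome precision of the step the paper dispatches as an ``immediate consequence'' of Theorem~\ref{CLequiv}.
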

%
%
%
\begin{proof}
Convergence \eqref{eovvia} is obvious.
Let now $0<\ce<D$ be fixed.
{By Remark~\ref{2202211829}}, there exists $\bar h<\ce$ such that, for every $h<\bar h$, 
\begin{equation}\label{fattaprima}
\lVert w^{\theta_h}_{h,\ce}-w^\alpha_{0,\ce}\rVert_{H^2(\Omega)}<\ce\,,
\end{equation}
where $w_{0,\ce}^\alpha$ 
is the unique minimizer of \eqref{energyI} in $\Bb_{\ep,\Omega}^\alpha$\,.
Choose such an $h$, call it $h(\ce)$, and notice that this choice can be made in a strictly monotone fashion\,.
Let now $0<r<D$\,; 
by \eqref{fattaprima} and Theorem \ref{2201181928}\,, we get
$$
\big\lVert w^{\theta_{h(\ce)}}_{h(\ce),\ce}-w^\alpha_{0}\big\rVert_{H^2(\Omega_r(\alpha))}\leq \big\lVert w^{\theta_{h(\ce)}}_{h(\ce),\ce}-w^\alpha_{0,\ce}\big\rVert_{H^2(\Omega_r(\alpha))}+\lVert w^\alpha_{0,\ce}-w^\alpha_{0}\rVert_{H^2(\Omega_r(\alpha))}<\ce+\mathrm{o}_\ce\,,
$$
where $\mathrm{o}_\ce\to 0$ as $\ep\to 0$\,.
By the arbitrariness of $r$ we get that $w^{\theta_{h(\ce)}}_{h(\ce),\ce}\to w^\alpha_{0}$ in $H^2_\loc(\Omega\setminus\bigcup_{j=1}^{J}\{x^j\})$\,, and hence, by the strict monotonicity of the map $\ce\mapsto h(\ce)$\, the first part of the claim follows.
Finally, \eqref{enerinormafine} is an immediate consequence of Theorem \ref{CLequiv}.
\end{proof}



\appendix

\section{Equivalence of boundary conditions}\label{geom_lemmas}
Here we show that if $A$ is a domain of class~$C^2$ and $v\in C^2(\overline{A})$\,,  then the boundary condition $\nabla^2v\,t=0$ on $\partial A$  is equivalent to requiring that~$v|_{\Gamma}$ is the trace of an affine function on every connected component $\Gamma$ of $\partial A$\,.
To this end, we first state and prove the following geometric lemma.
\begin{lemma}\label{geolemma}
Let $A\subset \R^2$ be a bounded, open, simply connected set with $C^2$ boundary and set $\ell\coloneqq|\partial A|$. Let $\gamma\in C^2([0,\ell];\R^{2})$ be the arc-length parametrization of $\partial A$ and let $\vartheta\in C^1([0,\ell])$ such that $\gamma'(\xi)=(-\sin\vartheta(\xi);\cos\vartheta(\xi))$\,.
Set $\vk(\xi)\coloneqq\vartheta'(\xi)$ for every $\xi\in [0,\ell]$\,. 
Let $v\in C^2(\overline{A})$ and let $g_D,g_N\colon[0,\ell]\to\R$ be the functions defined by
 $g_D\coloneqq v\circ\gamma$ and $g_N\coloneqq\partial_n v\circ\gamma$\,.
Then
\begin{equation}\label{eqdiff}
\begin{cases}
g_D''(\xi)=\langle \nabla^2 v(\gamma(\xi))\, \gamma'(\xi),\gamma'(\xi)\rangle-\vk(\xi)g_N(\xi) \\[1mm]
g_N'(\xi)=\langle\nabla^2 v(\gamma(\xi))\,\gamma'(\xi),-(\gamma'(\xi))^{\perp}\rangle+\vk(\xi)g_D'(\xi)
\end{cases}\quad \text{for every $\xi\in[0,\ell]$\,.}
\end{equation}
\end{lemma}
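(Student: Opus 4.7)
The proof is a direct computation using the chain rule together with the Frenet-type formulas for the planar curve $\gamma$. Let me outline the steps.

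First I would extract the kinematic identities for the frame $\{\gamma'(\xi),n(\xi)\}$ along $\partial A$. Since $\gamma'(\xi)=(-\sin\vartheta(\xi);\cos\vartheta(\xi))$, a direct differentiation yields
\begin{equation*}
\gamma''(\xi)=-\vk(\xi)(\cos\vartheta(\xi);\sin\vartheta(\xi))\,.
\end{equation*}
Consistently with the conventions in the paper ($t=n^{\perp}$, equivalently $n=-t^{\perp}$), the outward unit normal along $\partial A$ is $n(\xi)=-(\gamma'(\xi))^{\perp}=(\cos\vartheta(\xi);\sin\vartheta(\xi))$, so that $\gamma''(\xi)=-\vk(\xi)n(\xi)$. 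Differentiating the expression of $n$ one likewise obtains $n'(\xi)=\vk(\xi)(-\sin\vartheta(\xi);\cos\vartheta(\xi))=\vk(\xi)\gamma'(\xi)$. These are the two planar Frenet identities that will produce all curvature contributions in \eqref{eqdiff}.

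Next I would establish the first identity in \eqref{eqdiff}. Using $g_D=v\circ\gamma$ and the chain rule,
\begin{equation*}
g_D'(\xi)=\langle\nabla v(\gamma(\xi)),\gamma'(\xi)\rangle\,,
\end{equation*}
and differentiating again
\begin{equation*}
g_D''(\xi)=\langle\nabla^2 v(\gamma(\xi))\,\gamma'(\xi),\gamma'(\xi)\rangle+\langle\nabla v(\gamma(\xi)),\gamma''(\xi)\rangle\,.
\end{equation*}
Substituting $\gamma''=-\vk\,n$ and recognizing $\langle\nabla v(\gamma(\xi)),n(\xi)\rangle=\partial_n v(\gamma(\xi))=g_N(\xi)$ yields the first line of \eqref{eqdiff}.

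For the second identity in \eqref{eqdiff}, I would write $g_N(\xi)=\langle\nabla v(\gamma(\xi)),n(\xi)\rangle$ and differentiate:
\begin{equation*}
g_N'(\xi)=\langle\nabla^2 v(\gamma(\xi))\,\gamma'(\xi),n(\xi)\rangle+\langle\nabla v(\gamma(\xi)),n'(\xi)\rangle\,.
\end{equation*}
Using $n(\xi)=-(\gamma'(\xi))^{\perp}$ in the first term and $n'(\xi)=\vk(\xi)\gamma'(\xi)$ in the second term, combined with $\langle\nabla v(\gamma(\xi)),\gamma'(\xi)\rangle=g_D'(\xi)$, yields the second line of \eqref{eqdiff}. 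The whole argument is a routine application of the chain rule; the only thing to keep track of carefully is the sign convention relating $n$ and $\gamma'$, which is why I would first pin down the Frenet formulas explicitly. There is no genuine obstacle: the $C^2$-regularity of $\partial A$ and of $v$ makes every differentiation legitimate, and the structure of the right-hand sides is dictated entirely by the two identities $\gamma''=-\vk n$ and $n'=\vk\gamma'$.
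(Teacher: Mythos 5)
Your proof is correct and follows essentially the same route as the paper's: you derive the planar Frenet identities $\gamma''=-\vk\,n$ and $n'=\vk\,\gamma'$ from the explicit parametrization $\gamma'=(-\sin\vartheta;\cos\vartheta)$, $n=-(\gamma')^{\perp}$, and then obtain \eqref{eqdiff} by differentiating $g_D=v\circ\gamma$ and $g_N=\langle\nabla v\circ\gamma,n\rangle$ via the chain rule, which is exactly the paper's argument. The sign conventions you fix are consistent with the paper's convention $t=n^{\perp}$, so nothing further is needed.
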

\begin{proof}
By definition, the unit tangent vector is $t(\gamma(\xi))=\gamma'(\xi)=(-\sin\vartheta(\xi);\cos\vartheta(\xi))$ and the outer unit normal vector is $n(\gamma(\xi))=(-\gamma'(\xi))^{\perp}=(\cos\vartheta(\xi);\sin\vartheta(\xi))$\,, so that 
\begin{equation*}
\begin{aligned}
\frac{\ud}{\ud\xi}t(\gamma(\xi))=&\gamma''(\xi)=-\vk(\xi)n(\gamma(\xi))=-\vk(\xi)(-\gamma'(\xi))^{\perp}\\
\frac{\ud}{\ud\xi}n(\gamma(\xi))=&(-\gamma''(\xi))^{\perp}=\vk(\xi)t(\gamma(\xi))=\vk(\xi)\gamma'(\xi)\,,
\end{aligned}
\end{equation*}
and hence
\begin{eqnarray*}
g_D'(\xi)&=&\frac{\ud}{\ud\xi}v(\gamma(\xi))=\langle\nabla v(\gamma(\xi)),\gamma'(\xi)\rangle\,,\\
g_N'(\xi)&=&\frac{\ud}{\ud\xi}\langle\nabla v(\gamma(\xi)),n(\gamma(\xi))\rangle=\langle\nabla^2 v(\gamma(\xi))\gamma'(\xi),(-\gamma'(\xi))^{\perp})\rangle+\langle\nabla v(\gamma(\xi)),(-\gamma''(\xi))^{\perp}\rangle\\
&=&\langle\nabla^2 v(\gamma(\xi))\gamma'(\xi),(-\gamma'(\xi))^{\perp})\rangle+\vk(\xi)\langle\nabla v(\gamma(\xi)),\gamma'(\xi)\rangle\\
&=&\langle\nabla^2 v(\gamma(\xi))\gamma'(\xi),(-\gamma'(\xi))^{\perp})\rangle+\vk(\xi)g_D'(\xi)\\
g_D''(\xi)&=&\frac{\ud}{\ud\xi}\langle\nabla v(\gamma(\xi)),\gamma'(\xi)\rangle=\langle\nabla^2 v(\gamma(\xi))\gamma'(\xi),\gamma'(\xi)\rangle+\langle\nabla v(\gamma(\xi)),\gamma''(\xi)\rangle\\
&=&\langle\nabla^2 v(\gamma(\xi))\gamma'(\xi),\gamma'(\xi)\rangle-\vk(\xi)\langle\nabla v(\gamma(\xi)),n(\gamma(\xi))\rangle\\
&=&\langle\nabla^2 v(\gamma(\xi))\gamma'(\xi),\gamma'(\xi)\rangle-\vk(\xi)g_N(\xi)\,,
\end{eqnarray*}
that is \eqref{eqdiff}.
\end{proof}

We are now in a position to prove the main result of this section on the equivalence of the boundary conditions.
\begin{proposition}\label{2101141730}
Let $A\subset\R^2$ be an open and bounded set with boundary of class $C^2$\,.
Let $v\in C^2(\overline{A})$\,. 
Then, for every connected component $\Gamma$ of $\partial A$ we have that 
\begin{equation}\label{cambiocondA}
\nabla^2v\,t=0\textrm{ on }\Gamma\qquad\Leftrightarrow\qquad v=a\,,\quad\partial_nv=\partial_na\textrm{ on }\Gamma\,,
\end{equation}
for some affine function $a$\,.
\end{proposition}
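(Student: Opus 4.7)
The plan is to transform the problem into a linear Cauchy problem on $[0,\ell]$ via the arc-length parametrization $\gamma$ and Lemma \ref{geolemma}, then exploit uniqueness of solutions. I will interpret $\nabla^2 v\,t = 0$ on $\Gamma$ as the pointwise vanishing of the vector $\nabla^2 v(\gamma(\xi))\gamma'(\xi)$ for every $\xi$, which is equivalent to the simultaneous vanishing of its two components along the basis $\{\gamma'(\xi),-(\gamma'(\xi))^{\perp}\}$ appearing on the right-hand side of \eqref{eqdiff}. The argument is carried out on each connected component $\Gamma$ of $\partial A$ separately (with a possibly different affine $a$ on each), so simple connectedness of $A$ plays no role here.

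For the easy direction ($\Leftarrow$), suppose $v=a$ and $\partial_n v=\partial_n a$ on $\Gamma$ for some affine function $a$, and set $u:=v-a$. The identity $u\circ\gamma\equiv 0$ on $[0,\ell]$, differentiated once, gives $\partial_t u=0$ on $\Gamma$; together with the assumption $\partial_n u = 0$ on $\Gamma$ and the fact that $\{t,n\}$ is an orthonormal basis, this yields $\nabla u\equiv 0$ on $\Gamma$. Differentiating the vector identity $\nabla u\circ\gamma\equiv 0$ along $\gamma$ produces
\begin{equation*}
0=\frac{\ud}{\ud\xi}\bigl(\nabla u(\gamma(\xi))\bigr)=\nabla^2 u(\gamma(\xi))\,\gamma'(\xi),
\end{equation*}
i.e.\ $\nabla^2 u\,t=0$ on $\Gamma$. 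Since $\nabla^2 a\equiv 0$, we conclude $\nabla^2 v\,t=0$ on $\Gamma$.

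For the harder direction ($\Rightarrow$), assume $\nabla^2 v\,t=0$ on $\Gamma$. Then both scalar products in \eqref{eqdiff} vanish, so the functions $g_D:=v\circ\gamma$ and $g_N:=\partial_n v\circ\gamma$ satisfy the linear system
\begin{equation*}
g_D''(\xi)=-\vk(\xi)\,g_N(\xi),\qquad g_N'(\xi)=\vk(\xi)\,g_D'(\xi),
\end{equation*}
which, written as a first-order system in the triple $(g_D,g_D',g_N)\in\R^3$, has globally unique solutions on $[0,\ell]$ for any choice of initial data. I take as candidate affine function
\begin{equation*}
a(x):=v(\gamma(0))+\langle\nabla v(\gamma(0)),x-\gamma(0)\rangle,
\end{equation*}
whose three scalar parameters are determined precisely so that $a\circ\gamma$ and $\partial_n a\circ\gamma$ match $g_D$, $g_D'$, and $g_N$ at $\xi=0$. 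A direct computation (using $\gamma''=-\vk(-(\gamma')^{\perp})$ and $(n\circ\gamma)'=\vk\,\gamma'$, as in the proof of Lemma \ref{geolemma}) shows that $g_D^a:=a\circ\gamma$ and $g_N^a:=\partial_n a\circ\gamma$ solve the very same linear system. Uniqueness for the Cauchy problem then forces $g_D=g_D^a$ and $g_N=g_N^a$ on $[0,\ell]$, which is exactly the right-hand side of \eqref{cambiocondA}.

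The only real obstacle is bookkeeping: recognizing that the space of affine functions on $\R^2$ is three-dimensional and matches exactly the three initial data $(g_D(0),g_D'(0),g_N(0))$ that parametrize solutions of the above linear ODE, so that affine traces are not merely candidates but span the solution space. The $C^2$ regularity of $v$ and $\partial A$ ensures that Lemma \ref{geolemma} applies classically and that the ODE has continuous coefficients, so no further regularity arguments are needed.
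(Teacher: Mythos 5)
Your proof is correct, and it shares the paper's skeleton — reduce via Lemma \ref{geolemma} to the planar linear system $g_D''=-\vk g_N$, $g_N'=\vk g_D'$ — but it exploits that system differently in the hard direction. The paper constructs an explicit fundamental system of solutions ($\xi\mapsto(-\sin\bar\vartheta,\cos\bar\vartheta)$ and $\xi\mapsto(\cos\bar\vartheta,\sin\bar\vartheta)$ with $\bar\vartheta$ a primitive of $\vk$), identifies every solution $(g_D',g_N)$ with the data of an affine function, and then integrates $g_D'$ to recover the constant term; you instead take the first-order Taylor polynomial $a$ of $v$ at $\gamma(0)$ as a comparison solution (its traces solve the same homogeneous system because $\nabla^2 a=0$, by Lemma \ref{geolemma} applied to $a$), match the three initial data $(g_D(0),g_D'(0),g_N(0))$, and conclude by uniqueness for the linear Cauchy problem with continuous coefficient $\vk$. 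This buys you a shorter argument that avoids both the explicit solution formula and the final integration step, at the price of having to observe that affine functions exhaust the three-dimensional solution space only implicitly (via the initial-data count) rather than explicitly as in the paper. Your easy direction is also cleaner than the paper's trigonometric verification: setting $u=v-a$, noting $\nabla u\equiv 0$ on $\Gamma$ and differentiating $\nabla u\circ\gamma\equiv 0$ along the curve gives $\nabla^2 v\,t=0$ without any computation. Both treatments handle each connected component separately, so nothing is lost there.
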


\begin{proof}
Let $\Gamma$ be a connected component of $\partial A$\,, set $\ell\coloneqq|\Gamma|$ and let $\gamma\colon[0,\ell]\to\R^{2}$ be the arc-length parametrization of $\Gamma$\,,
so that the unit tangent vector is $t(\gamma(\xi))=\gamma'(\xi)$ and the outer unit normal vector is $n(\gamma(\xi))=\big(-\gamma'(\xi)\big)^{\perp}$\,.
Moreover, let $\vartheta\in C^1([0,\ell])$ be such that 
\begin{equation}\label{gammatan}
\gamma'(\xi)=(-\sin\vartheta(\xi);\cos\vartheta(\xi))
\end{equation} 
and set $\vk(\xi)\coloneqq\vartheta'(\xi)$ for every $\xi\in [0,\ell]$\,.
Recalling that $\{n(\gamma(\xi)),t(\gamma(\xi))\}$ is an orthonormal basis of $\R^{2}$ for every $\xi\in[0,\ell]$\,, we have
$\nabla^2v\, t=0$ on $\Gamma$ if and only if 
\begin{equation}\label{HttHtn}
\langle \nabla^2 v(\gamma(\xi))\, \gamma'(\xi),\gamma'(\xi)\rangle=\langle\nabla^2 v(\gamma(\xi))\,\gamma'(\xi),-(\gamma'(\xi))^{\perp}\rangle=0 \textrm{ for every }\xi\in[0,\ell] \,.
\end{equation}
Furthermore, letting $g_D,g_N\colon[0,\ell]\to\R$ be the functions defined by
 $g_D\coloneqq v\circ\gamma$ and $g_N\coloneqq\partial_n v\circ\gamma$\,, Lemma \ref{geolemma} and \eqref{HttHtn}, imply that \eqref{cambiocondA} is equivalent to 
\begin{equation}\label{eqdiff1}
\begin{cases}
g_D''(\xi)=-\vk(\xi)g_N(\xi) \\
g_N'(\xi)=\vk(\xi)g_D'(\xi)
\end{cases}\quad \text{for every $\xi\in[0,\ell]$}\quad\textrm{if and only if}\quad \begin{cases}
v=a& \text{on $\Gamma$} \\
\partial_nv=\partial_n a & \text{on $\Gamma$}\,,
\end{cases}
\end{equation}
for some affine function $a\colon\R^2\to\R^2$\,, namely, for a function $a$ of the form 
\begin{equation}\label{affine}
a(x)=c_0+c_1x_1+c_2x_2\,,
\end{equation}
 with $c_0,c_1,c_2\in\R$\,. 
If $v=a$ and $\partial_nv=\partial_n a$ on $\Gamma$\,, by straightforward computations for every $\xi\in[0,\ell]$ we get
\begin{equation*}
\begin{aligned}
g_N(\xi)=&c_1\cos\vartheta(\xi)+c_2\sin\vartheta(\xi)\,,\\
g_D'(\xi)=&-c_1\sin\vartheta(\xi)+c_2\cos\vartheta(\xi)\,,\\
g_N'(\xi)=&\vartheta'(\xi)\big(-c_1\sin\vartheta(\xi)+c_2\cos\vartheta(\xi)\big)=\vk(\xi)g'_D(\xi)\,,\\
g_D''(\xi)=&-\vartheta'(\xi)\big(c_1\cos\vartheta(\xi)+c_2\sin\vartheta(\xi)\big)=-\vk(\xi)g_N(\xi)\,,
\end{aligned}
\end{equation*}
which proves one implication in \eqref{eqdiff1}.
To prove the opposite implication, we study the ODE system {on} the left-hand side of \eqref{eqdiff1}, that, setting $z^1\coloneqq g_D'$ and $z^2\coloneqq g_N$\,, can be conveniently rewritten in the form
\begin{equation}\label{2012311409}
\begin{cases}
(z^1)'=-\vk z^2 \\
(z^2)'=\vk z^1\,.
\end{cases}
\end{equation}
By the classical theory of ordinary differential equations, since $\vk$ is a continuous function, for any given initial datum $z_0=(z^1_0;z^2_0)\in\R^2$\,, the Cauchy problem associated with the system \eqref{2012311409} with initial condition $(z^1(0);z^2(0))=z(0)=z_0$ admits a unique solution $z\in C^1([0,\ell];\R^2)$\,.
Furthermore, letting $\bar\vartheta$ denote a primitive of $\vk$\,, we observe that the functions $\xi\mapsto\bar z(\xi)\coloneqq (-\sin\bar\vartheta(\xi);\cos\bar\vartheta(\xi))$ and $\xi\mapsto\hat z(\xi)\coloneqq (\cos\bar\vartheta(\xi);\sin\bar\vartheta(\xi))$ provide a basis of solutions to \eqref{2012311409}. Therefore, since $\bar\vartheta$ and $\vartheta$ differ by a constant, any solution to  \eqref{2012311409} is of the form
\begin{equation*}
\begin{aligned}
(z^1;z^2)=(-c_1\sin\vartheta+c_2\cos\vartheta;c_1\cos\vartheta+c_2\sin\vartheta)\,,
\end{aligned}
\end{equation*}
so that, recalling the definitions of $z^1$ and $z^2$ and using \eqref{gammatan}, we get
\begin{equation}\label{evvai}
\begin{aligned}
g_N(\xi)=&c_1\cos\vartheta(\xi)+c_2\sin\vartheta(\xi)  =\langle (c_1;c_2),n(\gamma(\xi))\rangle\\
g_D(\xi)=&g_D(0)+\int_{0}^\xi\big(-c_1\sin\vartheta(\zeta)+c_2\cos\vartheta(\zeta)\big)\,\ud\zeta\eqqcolon c_0+\langle (c_1;c_2),\gamma(\xi)\rangle\,,
\end{aligned}
\end{equation}
where we have set $c_0\coloneqq g_D(0)-c_1\gamma^1(0)-c^2\gamma^2(0)$\,.
By \eqref{evvai} and the definitions of $g_D$ and $g_N$\,,
we get that $v=a$ and $\partial_n v=\partial_n a$ on $\Gamma$\,, for a certain function $a$ as in \eqref{affine}. 
This concludes the proof of the converse inequality and hence of the whole proposition.
\end{proof}

\section{Proof of Lemma \ref{lemma:energyvbarh}}\label{appendixprooflemma}

This section in devoted to the proof of Lemma \ref{lemma:energyvbarh}.
\begin{proof}
{By \eqref{perdislo} and \eqref{simplif}, straightforward computations show that} 
 \begin{equation}\label{hessbar}
 \begin{aligned}
 |\nabla^2\bar v_h(x)|^2=\frac{E^2}{(1-\nu^2)^2}\frac{s^2}{256\pi^2}
 \Bigg(&8\log^2\frac{\big(x_1-\frac{h}{2}\big)^2+x_2^2}{\big(x_1+\frac{h}{2}\big)^2+x_2^2}\\
 &+128\, \frac{h^2\,x_2^4\,x_1^2}{\bigg(\Big(\big(x_1-\frac{h}{2}\big)^2+x_2^2\Big)\Big(\big(x_1+\frac{h}{2}\big)^2+x_2^2\Big)\bigg)^2}\\
 &+32 x_2^2\bigg(\frac{x_1-\frac{h}{2}}{\big(x_1-{\frac{h}{2}})^2+x_2^2}-\frac{x_1+\frac{h}{2}}{\big(x_1+\frac{h}{2}\big)^2+x_2^2}\bigg)^2\Bigg)\,,
 \end{aligned}
 \end{equation}
and
 \begin{equation}\label{laplbar}
 |\Delta\bar v_h(x)|^2=\frac{E^2}{(1-\nu^2)^2}\frac{s^2}{16\pi^2}\log^2\frac{\big(x_1-\frac{h}{2}\big)^2+x_2^2}{\big(x_1+\frac{h}{2}\big)^2+x_2^2}\,.
 \end{equation}
For every open set $A\subset\R^2$ we set:
 \begin{eqnarray}\label{defF1}
  \F^1_h(A)&\!\!\!\!\coloneqq&\!\!\!\! \int_{A} \log^2\frac{\big(x_1-\frac{h}{2}\big)^2+x_2^2}{\big(x_1+\frac{h}{2}\big)^2+x_2^2}\,\ud x\,,\\ \label{defF2}
\F^2_h(A)&\!\!\!\!\coloneqq&\!\!\!\! h^2\int_{A} \frac{x_2^4\,x_1^2}{\bigg(\Big(\big(x_1-\frac{h}{2}\big)^2+x_2^2\Big)\Big(\big(x_1+\frac{h}{2}\big)^2+x_2^2\Big)\bigg)^2}\,\ud x\,,\\ \label{defF3}
\F^3_h(A)&\!\!\!\!\coloneqq&\!\!\!\! \int_{A}x_2^2\bigg(\frac{x_1-\frac{h}{2}}{(x_1-\frac{h}{2})^2+x_2^2}-\frac{x_1+\frac{h}{2}}{\big(x_1+\frac{h}{2}\big)^2+x_2^2}\bigg)^2\,\ud x\\ \nonumber
&\!\!\!\!=&\!\!\!\! h^2\int_{A}\frac{x_2^2\big(\frac{h^2}{4}+x_2^2-x_1^2\big)^2}{\bigg(\Big(\big(x_1-\frac{h}{2}\big)^2+x_2^2\Big)\Big(\big(x_1+\frac{h}{2}\big)^2+x_2^2\Big)\bigg)^2}\,\ud x\,,
\end{eqnarray}
so that, in view of \eqref{hessbar} and \eqref{laplbar}, it holds
\begin{equation}\label{enough}
\begin{aligned}
\int_{A}|\nabla^2\bar v_h|^2\,\ud x=&\,\frac{E^2}{(1-\nu^2)^2}\frac{s^2}{32\pi^2}\Big(\F^1_h(A)+16\F^2_h(A)+4\F^3_h(A)\Big)\,,\\
\int_{A} |\Delta\bar v_h|^2\,\ud x=&\,\frac{E^2}{(1-\nu^2)^2}\frac{s^2}{16\pi^2}\F^1_h(A)\,.
\end{aligned}
\end{equation}
We start by proving that
\begin{equation}\label{anello}
\lim_{h\to 0}\frac{1}{h^2\log\frac{R}{h}}\G(\bar v_h; A_{h,R}(0))=\frac{E}{1-\nu^2}\frac{s^2}{8\pi}\,.
\end{equation}
 To this end, by the very definition of $\G$ in \eqref{energyairy} and in view of \eqref{enough}, it is enough to show that
 \begin{eqnarray}\label{F1}
 \lim_{h\to 0}\frac{1}{h^2\log \frac{R}{h}} \F^1_h(A_{h,R}(0))&\!\!\!\!=&\!\!\!\! 4\pi\,,\\
 \label{F2}
  \lim_{h\to 0}\frac{1}{h^2\log \frac{R}{h}} \F^2_h(A_{h,R}(0))&\!\!\!\!=&\!\!\!\! \frac{\pi}{8}\,,\\
   \label{F3}
  \lim_{h\to 0}\frac{1}{h^2\log\frac{R}{h}} \F^3_h(A_{h,R}(0))&\!\!\!\!=&\!\!\!\! \frac{\pi}{2}\,.
 \end{eqnarray}
To this purpose, for every $0<h<R$\,, we set $N_h\coloneqq \big\lceil\frac{\log\frac R h}{\log 2}\big\rceil$\,, so that $2^{N_h-1}h\le R\le 2^{N_h}h$\,. 
We start by proving \eqref{F1}. 
By using the change of variable $x=2^{n-1}h y$ for every $n=1,\ldots,N_h$\,, we have
\begin{equation}\label{contopereffeuno}
\begin{aligned}
h^2\sum_{n=1}^{N_h-1}\frac{2^{2n}}{4}\int_{A_{1,2}(0)}\log^2\frac{(y_1-2^{-n})^2+y_2^2}{(y_1+2^{-n})^2+y_2^2}\,\ud y
\le \F^1_h(A_{h,R}(0))\\
\le h^2\sum_{n=1}^{N_h}\frac{2^{2n}}{4}\int_{A_{1,2}(0)}\log^2\frac{(y_1-2^{-n})^2+y_2^2}{(y_1+2^{-n})^2+y_2^2}\,\ud y\,.
\end{aligned}
\end{equation}
We start by discussing the limit of the left-hand side integral in \eqref{contopereffeuno}. 
Let $K< N_h$ and let $h$ be sufficiently small; 
{then, for every $n=\lfloor\frac{N_h}{K}\rfloor,\ldots,N_h-1$\,, and for every $y\in A_{1,2}(0)$\,, by the Taylor expansion of the functions $\log(1+t)$ and $((y_1+t)^2+y_2^2)^{-1}$ around the pole $t=0$\,, we have that
\begin{equation}\label{taylore}
\log^2\frac{(y_1-2^{-n})^2+y_2^2}{(y_1+2^{-n})^2+y_2^2}= \log^2\Big(1-2^{2-n}\frac{y_1}{(y_1+2^{-n})^2+y_2^2}\Big)
\ge 2^{4-2n}\frac{y_1^2}{|y|^4}-C2^{-3n}\,,
\end{equation}
for some universal constant $C>0$\,.
 }
Therefore, by \eqref{taylore}, we deduce that
\begin{equation*}
\begin{aligned}
&\sum_{n=1}^{N_h-1}\frac{2^{2n}}{4}\int_{A_{1,2}(0)}\log^2\frac{(y_1-2^{-n})^2+y_2^2}{(y_1+2^{-n})^2+y_2^2}\,\ud y
\ge \sum_{n=\lfloor\frac{N_h}{K}\rfloor}^{N_h-1}\frac{2^{2n}}{4}\int_{A_{1,2}(0)}\log^2\frac{(y_1-2^{-n})^2+y_2^2}{(y_1+2^{-n})^2+y_2^2}\,\ud y\\
\ge&  \sum_{n=\lfloor\frac{N_h}{K}\rfloor}^{N_h-1} \int_{A_{1,2}(0)}\frac{4y_1^2}{|y|^4}\,\ud y-C {\sum_{n=\lfloor\frac{N_h}{K}\rfloor}^{N_h-1}2^{-n}}
\ge N_h\Big(1-\frac{1}{K}\Big)4\pi\log 2-C\,.
\end{aligned}
\end{equation*}
By the very definition of $N_h$ and by \eqref{contopereffeuno}, we thus have that
\begin{equation}\label{quasiuno}
\frac{1}{h^2\log \frac{R}{h}}\F^1_h(A_{h,R}(0))\ge \Big(1-\frac{1}{K}\Big)4\pi+\omega(h)\,,
\end{equation}
where $\omega(h)\to 0$ as $h\to 0$\,.
By sending first $h\to 0$ and then $K\to +\infty$ in \eqref{quasiuno} we get the inequality ``$\ge$'' in \eqref{F1}.
As for the inequality ``$\le$'' in \eqref{F1}, 
{we preliminarily observe that, by arguing as in \eqref{taylore} for $h$ sufficiently small, for every $K<N_h$\,, for every $n=\lfloor\frac{N_h}{K}\rfloor,\ldots,N_h-1$\,, and for every $y\in A_{1,2}(0)$\,, it holds
$$
\log^2\frac{(y_1-2^{-n})^2+y_2^2}{(y_1+2^{-n})^2+y_2^2}= \log^2\Big(1-2^{2-n}\frac{y_1}{(y_1+2^{-n})^2+y_2^2}\Big)
\le 2^{4-2n}\frac{y_1^2}{|y|^4}+C2^{-3n}\,,
$$
for some universal constant $C>0$\,;
}
therefore, {for $h$ sufficiently small and for every $K<N_h$\,, we get}
\begin{equation*}
\begin{aligned}
 &\sum_{n=1}^{N_h}\frac{2^{2n}}{4}\int_{A_{1,2}(0)}\log^2\frac{(y_1-2^{-n})^2+y_2^2}{(y_1+2^{-n})^2+y_2^2}\,\ud y\\
 \le&\, C_0+ C_1\Big\lceil\frac{N_h}{K}\Big\rceil+\sum_{n=\lceil\frac{N_h}{K}\rceil}^{N_h}\frac{2^{2n}}{4}\int_{A_{1,2}(0)}\log^2\frac{(y_1-2^{-n})^2+y_2^2}{(y_1+2^{-n})^2+y_2^2}\,\ud y\\
 \le&\, C_0+ C_1\Big\lceil\frac{N_h}{K}\Big\rceil+ \sum_{n=\lceil\frac{N_h}{K}\rceil}^{N_h} \int_{A_{1,2}(0)}\frac{4y_1^2}{|y|^4}\,\ud y+C_2
 \le C_0+ C_1\Big\lceil\frac{N_h}{K}\Big\rceil+N_h\Big(1-\frac{1}{K}\Big)4\pi\log 2+C_2\,,
\end{aligned}
\end{equation*}
{for some universal constants $C_0, C_1, C_2>0$\,.} 
Therefore, by the very definition of $N_h$ and by \eqref{contopereffeuno}, we get
\begin{equation*}
\frac{1}{h^2\log\frac{R}{h}}\F^1_h(A_{h,R}(0))\le \frac{C_1}{K}+ \Big(1-\frac{1}{K}\Big)4\pi+\omega(h)\,,
\end{equation*}
where $\omega(h)\to 0$ as $h\to 0$\,.
Now, sending first $h\to 0$ and then $K\to +\infty$\,, this implies also the inequality ``$\le$'' in \eqref{F1}.
In order to prove \eqref{F2}, we notice that 
\begin{equation*}
\begin{aligned}
\F^2_h(A_{h,R}(0))
=&h^2\int_{0}^{2\pi}\ud \vartheta \sin^4\vartheta \cos^2\vartheta\int_{h}^{R}\frac{\rho^7}{\Big(\big(\rho^2-h\rho\cos\vartheta+\frac{h^2}{4}\big)\big(\rho^2+h\rho\cos\vartheta+\frac{h^2}{4}\big)\Big)^2}\,\ud\rho\\
=& h^2\int_{0}^{2\pi}\ud \vartheta \sin^4\vartheta \cos^2\vartheta\int_{h}^{R}\frac{\rho^7}{\Big(\rho^4-\rho^2\frac{h^2}{2}\cos(2\vartheta)+\frac{h^4}{16}\Big)^2}\,\ud\rho\,,
\end{aligned}
\end{equation*}
so that
\begin{equation}\label{auno}
\begin{split}
&\, \frac{1}{\log\frac{R}{h}}\int_{0}^{2\pi}\ud \vartheta \sin^4\vartheta \cos^2\vartheta\int_{h}^{R}\frac{\rho^7}{\big(\rho^2+\frac{h^2}{4}\big)^4}\,\ud\rho \le \frac{1}{h^2\log\frac{R}{h}}\F^2_h(A_{h,R}(0))\\
\le&\,  \frac{1}{\log\frac{R}{h}}\int_{0}^{2\pi}\ud \vartheta \sin^4\vartheta \cos^2\vartheta\int_{h}^{R}\frac{\rho^7}{\big(\rho^2-\frac{h^2}{4}\big)^4}\,\ud\rho\,.
\end{split}
\end{equation}
By the change of variable $t=\frac{\rho}{h}$, we have that
\begin{equation*}
\begin{aligned}
\int_{h}^R\frac{\rho^7}{\big(\rho^2\mp\frac{h^2}{4}\big)^4}\,\ud\rho=\int_{1}^{\frac{R}{h}}\frac{t^7}{\big(t^2\mp \frac{1}{4}\big)^4}\,\ud t\,,
\end{aligned}
\end{equation*}
and, by de l'H$\hat{\mathrm{o}}$pital's rule, we get
\begin{equation}\label{adue}
\lim_{h\to 0}\frac{1}{\log\frac{R}{h}} \int_{1}^{\frac{R}{h}}\frac{t^7}{\big(t^2\mp \frac{1}{4}\big)^4}\,\ud t=\lim_{N\to +\infty}\frac{N^8}{\big(N^2\mp \frac{1}{4}\big)^4}=1\,. 
\end{equation}
Now, since
\begin{equation}\label{atre}
\int_{0}^{2\pi} \sin^4\vartheta \cos^2\vartheta\,\ud \vartheta=\frac{\pi}{8}\,,
\end{equation} 
in view of \eqref{auno} and \eqref{adue}, we obtain
\begin{equation}\label{afin}
\lim_{h\to 0}\frac{1}{h^2\log\frac{R}{h}}  \F^2_h(A_{h,R}(0))=\frac{\pi}{8}\,,
\end{equation}
i.e., \eqref{F2}.

Finally, we prove that also \eqref{F3} holds true. To this purpose, by using the change of variable $x=2^{n-1}h y$ for every $n=1,\ldots,N_h$\,, we have
\begin{equation*}
\begin{aligned}
&h^2\sum_{n=1}^{N_h-1}\int_{A_{1,2}(0)}\frac{y_2^2\big(y_2^2-y_1^2+{2^{-2n}}\big)^2}{\bigg(\Big(\big(y_1-2^{-n}\big)^2+y_2^2\Big)\Big(\big(y_1 + 2^{-n}\big)^2+y_2^2\Big)\bigg)^2}\,\ud y\le \F_h^3(A_{h,R}(0))\\
\le &h^2\sum_{n=1}^{N_h}\int_{A_{1,2}(0)}\frac{y_2^2\big(y_2^2-y_1^2+{2^{-2n}}\big)^2}{\bigg(\Big(\big(y_1-2^{-n}\big)^2+y_2^2\Big)\Big(\big(y_1 + 2^{-n}\big)^2+y_2^2\Big)\bigg)^2}\,\ud y\,.
\end{aligned}
\end{equation*}
Therefore, by arguing as in the proof of \eqref{F1} we have that there exist two functions $\omega_1,\omega_2$ with $\omega_j(h)\to 0$ as $h\to 0$ and a constant $C>0$ such that
\begin{equation*}
\begin{split}
&\, \frac{1}{\log\frac{R}{h}}\Big(1-\frac{1}{K}\Big)N_h\frac{\pi}{2}\log 2+\omega_1(h) \le \frac{1}{h^2\log\frac{R}{h}}\F^3_h(A_{h,R}(0)) \\
\le&\, \frac{C}{\log\frac{R}{h}}\frac{N_h}{K}+ \frac{1}{\log\frac{R}{h}}\Big(1-\frac{1}{K}\Big)N_h\frac{\pi}{2}\log 2+\omega_2(h)\,,
\end{split}
\end{equation*}
whence \eqref{F3} follows by sending first $h\to 0$ and then $K\to +\infty$.
\vskip5pt
\noindent
Now we show that
\begin{equation}\label{palletta}
\lim_{h\to 0}\frac{1}{h^2\log\frac{R}{h}}\G(\bar v_h;B_h(0))=0\,.
\end{equation} 
By the very definition of $\G$ in \eqref{energyairy} and in view of \eqref{enough}, it is enough to prove that
\begin{equation}\label{claim_core}
\lim_{h\to 0}\frac{1}{h^2\log\frac{R}{h}}\F_h^k(B_h(0))=0\qquad\textrm{for every }k=1,2,3\,.
\end{equation}
Notice that
\begin{equation*}
0\le \F_h^1({B}_{h}(0))\le 2\pi\int_{0}^h\rho\log^2\frac{\big(\rho+\frac{h}{2}\big)^2}{\big(\rho-\frac{h}{2}\big)^2}\,\ud\rho\,,
\end{equation*}
so that using the change of variable $t=\frac{\rho}{h}$, we get
\begin{equation*}
0\le\frac{1}{h^2\log\frac{R}{h}} \F_h^1({B}_{h}(0))\le 2\pi\frac{1}{\log\frac{R}{h}}\int_{0}^1 t\log^2\frac{\big(t+\frac{1}{2}\big)^2}{\big(t-\frac{1}{2}\big)^2}\,\ud t\,,
\end{equation*}
whence the claim \eqref{claim_core} for $k=1$ follows since
$$
\int_{0}^1t\log^2\frac{\big(t+\frac{1}{2}\big)^2}{\big(t-\frac{1}{2}\big)^2}\,\ud t<+\infty\,.
$$
Now we show that 
\begin{equation}\label{claim_f2_core}
\lim_{h\to 0}\frac{1}{h^2}\F^2_h(B_h(0))=0\,.
\end{equation}
To this purpose, we notice that, by the very definition of $\F^2_h$ in \eqref{defF2}, by passing to polar coordinates $(\rho,\vartheta)$ and by using the change of variable $t=\frac{\rho}{h}$, we can write
\begin{equation*}
\frac{1}{h^2}\F^2_h(B_h(0))=\int_{0}^1\ud t\int_{0}^{2\pi}\frac{t^7\sin^4\vartheta\cos^2\vartheta}{\Big(\big(t^2-\frac{1}{4}\big)^2+{t^2}\sin^2\vartheta\Big)^2}\,\ud\vartheta
\le\int_{0}^1\ud t\int_{0}^{2\pi}\frac{t^7\sin^4\vartheta}{\Big(\big(t^2-\frac{1}{4}\big)^2+t^2\sin^2\vartheta\Big)^2}\,\ud\vartheta\,;
\end{equation*}
since the integrand above is $\pi$-periodic and bounded when $\vartheta$ is far away from $0$\,, $\pi$\,, and $2\pi$, in order to obtain \eqref{claim_f2_core} it is enough to show that for $\ep>0$ small enough we have
\begin{equation}\label{claim_f2_core_1}
\int_{0}^1\ud t\int_{0}^{\ep}\frac{t^7\sin^4\vartheta}{\Big(\big(t^2-\frac{1}{4}\big)^2+{t^2}\sin^2\vartheta\Big)^2}\,\ud\vartheta<+\infty\,.
\end{equation}
By a first-order Taylor approximation, the integral above is equivalent to 
\begin{equation*}
\begin{aligned}
&\int_{0}^1\ud t\int_{0}^{\ep}\frac{t^7\vartheta^4}{\Big(\big(t^2-\frac{1}{4}\big)^2+t^2\vartheta^2\Big)^2}\,\ud\vartheta\\
=&\frac 1 2\int_{0}^1\bigg(\ep t^3\Big(\frac{\big(t^2-\frac 1 4\big)^2}{\big(t^2-\frac 1 4)^2+t^2\ep^2}+2\Big)-3t^2\Big(t^2-\frac 1 4\Big)\arctan\frac{\ep t}{t^2-\frac 1 4}\bigg)\,\ud t<+\infty\,,
\end{aligned}
\end{equation*}
which proves \eqref{claim_f2_core_1} and hence \eqref{claim_f2_core}.
Analogously, by the very definition of $\F^3_h$ in \eqref{defF3},  by passing to polar coordinates $(\rho,\vartheta)$ and by using the change of variable $t=\frac{\rho}{h}$\,, we have that
\begin{equation*}
\frac{1}{h^2}\F^3_h(B_h(0))=\int_{0}^1\ud t\int_{0}^{2\pi}\frac{t^3\sin^2\vartheta\big(\frac 1 4-t^2\cos2\vartheta\big)^2}{\Big(\big(t^2-\frac{1}{4}\big)^2+{t^2}\sin^2\vartheta\Big)^2}\,\ud\vartheta <+\infty\,,\\
\end{equation*}
where the boundedness can be proved by arguing as in the proof of \eqref{claim_f2_core_1}. 
Indeed, by a first-order Taylor approximation, the integral above close to $\vartheta=0,\pi,2\pi$ is equivalent to 
$$\int_{0}^1\ud t\int_{0}^{\ep}\frac{t^3\vartheta^2\big(\frac14-t^2+2t^2\vartheta^2\big)^2}{\Big(\big(t^2-\frac{1}{4}\big)^2+t^2\vartheta^2\Big)^2}\,\ud\vartheta\,,$$
which can be proved to be finite by a straightforward computation.
%
This proves 
\eqref{claim_core} also for $k=3$\,, so that, by \eqref{anello} and \eqref{palletta}, the proof is concluded.
\end{proof}
\section{Proof of Lemma \ref{lm:mindis}}\label{prooflm:mindis}
This section is devoted to the proof of Lemma \ref{lm:mindis}.
\begin{proof}[Proof of Lemma \ref{lm:mindis}]
\blu{With some abuse of notation we set $W^s_{0,\ce}\coloneqq W^{se_2\de_0}_{0,\ce}$\,, where $W^{se_2\de_0}_{0,\ce}$ is  defined in \eqref{2201181926}.}
We preliminarily show that $W^s_{0,\ce}\in \Bb_{\ce,R}$\,.
To this purpose, we first notice that
\begin{equation*}
\alpha_\ce+\frac{\beta_\ce}{R^2}+\gamma_\ce R^2+2\log R^2=2\frac{R^2-\ce^2}{R^2+\ce^2}-2\log R^2+2\frac{\ce^2}{R^2+\ce^2}-2\frac{R^2}{R^2+\ce^2}+2\log R^2=0\,,
\end{equation*}
and hence
\begin{equation}\label{=0bordoR}
W^s_{0,\ce}=0\qquad\textrm{on }\partial B_R(0)\,.
\end{equation}
We define the function $\widehat W_{0,\ce}\colon A_{\ce,R}(0)\to\R$ as 
\begin{equation}\label{2206060944}
\widehat W_{0,\ce}(x)\coloneqq \bigg(\alpha_\ce +\beta_\ce\frac{1}{|x|^2}+\gamma_\ce|x|^2+2\log|x|^2\bigg) x_1 
\,,
\end{equation}
and we notice that 
\begin{equation}\label{2206060945}
W_{0,\ce}^s\equiv \frac{s}{16\pi}\frac{E}{1-\nu^2} \widehat W_{0,\ce}\qquad\text{in $A_{\ce,R}(0)$\,.}
\end{equation}
For every $x\in  A_{\ep,R}(0)$
\begin{equation*}
\nabla \widehat W_{0,\ce}(x)=\begin{pmatrix}
\displaystyle \alpha_\ce+\beta_\ce\frac{x_2^2-x_1^2}{|x|^4}+\gamma_\ce(|x|^2+2x_1^2)+2\log|x|^2+4\frac{x_1^2}{|x|^2}\\[3mm]
\displaystyle -2\beta_\ce\frac{x_1x_2}{|x|^4}+2\gamma_\ce x_1x_2+4\frac{x_1x_2}{|x|^2}
\end{pmatrix}
\end{equation*}
whence, for $x\in\partial B_R(0)$, we deduce that
\begin{equation*}
\begin{aligned}
\partial_n \widehat W_{0,\ce}(x)=&\,\frac{x_1}{R}\Big(\alpha_\ce+\beta_\ce\frac{x_2^2-x_1^2}{|x|^4}+\gamma_\ce(|x|^2+2x_1^2)+2\log|x|^2+4\frac{x_1^2}{|x|^2}\Big)\\
&\,+\frac{x_2}{R}\Big(-2\beta_\ce\frac{x_1x_2}{|x|^4}+2\gamma_\ce x_1x_2+4\frac{x_1x_2}{|x|^2}\Big)\\
=&\,\frac{x_1}{R}\Big(\alpha_\ce-\frac{\beta_\ce}{|x|^2}+3\gamma_\ce|x|^2+2\log|x|^2+4\Big)\\
=&\,\frac{x_1}{R}\Big(\alpha_\ce-\frac{\beta_\ce}{R^2}+3\gamma_\ce R^2+2\log R^2+4\Big)\\
=&\,\frac{x_1}{R}\Big(2\frac{R^2-\ce^2}{R^2+\ce^2}-2\frac{\ce^2}{R^2+\ce^2}-6\frac{R^2}{R^2+\ce^2}+4\Big)=0
\end{aligned}
\end{equation*}
and, consequently,
\begin{equation}\label{norm=0bordoR}
\partial_n W^s_{0,\ce}=0\qquad\textrm{on }\partial B_R(0);
\end{equation}
Moreover, it is immediate to see that $W^s_{0,\ce}\in C^0(B_R(0))$\,. 
Furthermore, the inner and outer traces of $\partial_t   W^s_{0,\ce}$ at $\partial B_{\ep}(0)$ are continuous so that $W^s_{0,\ce}\in H^1(B_R(0))$. 
Therefore, in order to check that  $W^s_{0,\ce}\in H^2(B_R(0))$ it is enough to show that
\begin{equation}\label{2206031751}
\partial_t \nabla {\widehat W_{0,\ep}}=0\qquad \textrm{on } \partial B_{\ep}(0),
\end{equation}
{where $t$ is the tangent vector to $\partial B_\ce(0)$.}
To this end, we observe
\begin{equation*}
\begin{split}
&\, \nabla^2 \widehat W_{0,\ce}(x) \\
=&\,
\begin{pmatrix}
\displaystyle -2\beta_\ce\frac{x_1(3x_2^2-x_1^2)}{|x|^6}+6 \gamma_\ce x_1+4x_1\frac{x_1^2+3x_2^2}{|x|^4}&\displaystyle 2\beta_\ce\frac{x_2(3x_1^2-x_2^2)}{|x|^6}+2\gamma_\ce x_2+4x_2\frac{x_2^2-x_1^2}{|x|^4}\\[3mm]
\displaystyle 2\beta_\ce\frac{x_2(3x_1^2-x_2^2)}{|x|^6}+2\gamma_\ce x_2+4x_2\frac{x_2^2-x_1^2}{|x|^4}
&\displaystyle -2\beta_\ce\frac{x_1(x_1^2-3x_2^2)}{|x|^6}+2\gamma_\ce x_1+4x_1\frac{x_1^2-x_2^2}{|x|^4}
\end{pmatrix}
\end{split}
\end{equation*}
so that, for $x\in\partial B_\ce(0)$\,, we have
\begin{equation}\label{hesstang01}
\begin{aligned}
\partial_{x_1}\nabla \widehat W_{0,\ce}\cdot t=&\,-\frac{x_2}{\ce}\Big(-2\beta_\ce\frac{x_1(3x_2^2-x_1^2)}{|x|^6}+6 \gamma_\ce x_1+4x_1\frac{|x|^2+2x_2^2}{|x|^4}\Big)\\
&\,+\frac{x_1}{\ce} \Big(2\beta_\ce\frac{x_2(3x_1^2-x_2^2)}{|x|^6}+2\gamma_\ce x_2 +4x_2\frac{x_2^2-x_1^2}{|x|^4}
\Big)\\
=&\,\frac{1}{\ce}x_1x_2\Big(4\frac{\beta_\ce}{|x|^4}-4\gamma_\ce-\frac{8}{|x|^2}\Big)
=\frac{1}{\ce}x_1x_2\Big(4\frac{\beta_\ce}{\ce^4}-4\gamma_\ce-\frac{8}{\ce^2}\Big)\\
=&\,\frac{8}{\ce}x_1x_2\Big(\frac{R^2}{\ce^2(R^2+\ce^2)}+\frac{1}{R^2+\ce^2}-\frac{1}{\ce^2}\Big)=0
\end{aligned}
\end{equation}
and, analogously,
\begin{equation}\label{hesstang02}
\begin{aligned}
\partial_{x_2}\nabla \widehat W_{0,\ce}\cdot t=&\frac{2}{\ce}(x_2^2-x_1^2)\Big(\frac{\beta_\ce}{\ce^4}-\gamma_\ce-\frac{2}{\ce^2}\Big)=0\,.
\end{aligned}
\end{equation}
Finally, by \eqref{=0bordoR}, \eqref{norm=0bordoR}, \eqref{hesstang01}, \eqref{hesstang02}, and using that $W^s_{0,\ce}\in C^4(A_{\ce,R}(0))$\,, we deduce that $W_{0,\ce}^s\in \Bb_{\ce,R}$\,.

Now we prove that  $W_{0,\ce}^s$ is the minimizer of \blu{$\I_{0,\ce}^{se_2\de_0}$} in $\Bb_{\ce,R}$\,.
In view of \eqref{defJ0eff}, for every $\phi\in\Bb_{\ce,R}$\,,   $W_{0,\ce}^s$ must satisfy
\begin{equation}\label{el}
\begin{aligned}
0=&\,\frac{\ud}{\ud t}{\Big|_{t=0}}\blu{\I^{se_2\de_0}_{0,\ce}}(W_{0,\ce}^s+t\phi)\\ 
=&\,\frac{1+\nu}{E}\Big(\int_{A_{\ce,R}(0)}\nabla^2W_{0,\ce}^s:\nabla^2\phi\,\ud x-\nu\int_{A_{\ce,R}(0)}\Delta W_{0,\ce}^s\,\Delta\phi\,\ud x\Big)+\frac{s}{2\pi\ce}\int_{\partial B_\ce(0)}\partial_{x_1}\phi\,\ud\Huno\\
=&\,\frac{1-\nu^2}{E}\int_{A_{\ce,R}(0)}\Delta^2W_{0,\ce}^s\,\phi\,\ud x\\
&\,+\frac{1-\nu^2}{E}\int_{\partial B_\ce(0)}\partial_{n}\Delta W^s_{0,\ce}\phi\,\ud\Huno+\frac{1+\nu}{E}\int_{\partial B_\ce(0)}\big(\nu\Delta W^s_{0,\ce}-(\nabla^2W^s_{0,\ce})_{nn}\big)\partial_{n}\phi\,\ud\Huno\\ \nonumber
&\,
+\int_{\partial B_\ce(0)}\frac{s}{2\pi\ce}\partial_{x_1}\phi\,\ud\Huno\,,
\end{aligned}
\end{equation}
where we have used that
$\phi=\partial_n\phi=0$ on $\partial B_R(0)$ and integration by parts to get
\begin{eqnarray*}
\int_{A_{\ce,R}(0)}\nabla^2W^s_{0,\ce}:\nabla^2\phi\,\ud x&\!\!\!\!=&\!\!\!\! \int_{A_{\ce,R}(0)}\Delta^2W^s_{0,\ce}\,\phi\,\ud x+
\int_{\partial A_{\ce,R}(0)}\langle\nabla^2 W^s_{0,\ce} n,\nabla\phi\rangle\,\ud\Huno\\
&&\!\!\!\! -\int_{\partial A_{\ce,R}(0)}\partial_n(\Delta W^s_{0,\ce})\phi\,\ud\Huno\\
&\!\!\!\!=&\!\!\!\! \int_{A_{\ce,R}(0)}\Delta^2W^s_{0,\ce}\,\phi\,\ud x+\int_{\partial B_\ce(0)}\big(\phi\,\partial_n(\Delta W^s_{0,\ce})-\langle\nabla^2 W_{0,\ce}^s n,\nabla\phi\rangle\big)\,\ud\Huno\,,
\\
\int_{A_{\ce,R}(0)}\Delta W^s_{0,\ce}\,\Delta\phi\,\ud x&=&\int_{A_{\ce,R}(0)}\Delta^2W^s_{0,\ce}\,\phi\,\ud x+\int_{\partial A_{\ce,R}(0)}\Delta W^s_{0,\ce}\,\partial_n\phi\,\ud\Huno\\
&&\!\!\!\! -\int_{\partial A_{\ce,R}(0)}\phi\,\partial_n(\Delta W^s_{0,\ce})\,\ud\Huno\\
&\!\!\!\!=&\!\!\!\! \int_{A_{\ce,R}(0)}\Delta^2W^s_{0,\ce}\,\phi\,\ud x+\int_{\partial B_\ce(0)} \big(\phi\,\partial_n(\Delta W^s_{0,\ce})-\Delta W^s_{0,\ce}\,\partial_n\phi\big)\,\ud\Huno\,.
\end{eqnarray*}
 Therefore, proving the minimality of $W_{0,\ce}^s$ is equivalent to showing that
\begin{eqnarray}\label{bihar}
&&\Delta^2 W^s_{0,\ce}=0\qquad\textrm{in }A_{\ce,R}(0)\,,\\ \label{natural}
&&\frac{1-\nu^2}{E}\int_{\partial B_\ce(0)}\partial_{n}\Delta W^s_{0,\ce}\phi\,\ud\Huno+\frac{1+\nu}{E}\int_{\partial B_\ce(0)}\big(\nu\Delta W^s_{0,\ce}-(\nabla^2W^s_{0,\ce})_{nn}\big)\partial_{n}\phi\,\ud\Huno\\ \nonumber
&&\phantom{\frac{1-\nu^2}{E}}+\int_{\partial B_\ce(0)}\frac{s}{2\pi\ce}\partial_{x_1}\phi\,\ud\Huno=0\qquad\textrm{ for every }\phi\in \Bb_{\ce,R}\,.
\end{eqnarray}
By \eqref{2206060945} and the very definition of $\widehat W_{0,\ce}$ in \eqref{2206060944}, the biharmonicity in \eqref{bihar} follows by a direct computation,
so that we are left with proving \eqref{natural}.
To this purpose, we notice that
\begin{eqnarray*}
\Delta W^s_{0,\ce}&\!\!\!\!=&\!\!\!\! \frac{s}{16\pi}\frac{E}{1-\nu^2}\Delta \widehat W_{0,\ce}=\frac{s}{2\pi}\frac{E}{1-\nu^2}x_1\Big(\gamma_\ce+\frac{1}{|x|^2}\Big)\,,\\
\partial_{n}\Delta W^s_{0,\ce}&\!\!\!\! =&\!\!\!\! \frac{s}{2\pi}\frac{E}{1-\nu^2}\frac{x_1}{|x|}\Big(\gamma_\ce-\frac{1}{|x|^2}\Big)\,,\\
(\nabla^2W^s_{0,\ce})_{nn}&\!\!\!\!=&\!\!\!\! \frac{s}{8\pi}\frac{E}{1-\nu^2} x_1\Big(\frac{\beta_\ce}{|x|^4}+3\gamma_\ce +\frac{2}{|x|^2}\Big)\,,
\end{eqnarray*}
whence we deduce, recalling \eqref{abc},
\begin{eqnarray}\label{lapla}
\Delta W^s_{0,\ce}\Big|_{\partial B_\ce(0)}&\!\!\!\!=&\!\!\!\! \frac{s}{2\pi}\frac{E}{1-\nu^2}x_1\Big(\gamma_\ce+\frac{1}{\ce^2}\Big)= \frac{s}{2\pi}\frac{E}{1-\nu^2}\frac{R^2-\ce^2}{\ce^2(R^2+\ce^2)}x_1\,,\\ \label{dernormlapla}
\partial_{n}\Delta W^s_{0,\ce}\Big|_{\partial B_\ce(0)}&\!\!\!\!=&\!\!\!\! \frac{s}{2\pi}\frac{E}{1-\nu^2}\frac{x_1}{\ce}\Big(\gamma_\ce-\frac{1}{\ce^2}\Big)=-\frac{s}{2\pi}\frac{E}{1-\nu^2}\frac{R^2+3\ce^2}{\ce^2(R^2+\ce^2)}\frac{x_1}{\ce}
\,,\\ \label{hessnormnorm}
(\nabla^2W^s_{0,\ce})_{nn}\Big|_{\partial B_\ce(0)}&\!\!\!\!=&\!\!\!\! \frac{s}{8\pi}\frac{E}{1-\nu^2} x_1\Big(\frac{\beta_\ce}{\ce^4}+3\gamma_\ce +\frac{2}{\ce^2}\Big)=\frac{s}{2\pi}\frac{E}{1-\nu^2}\frac{R^2-\ce^2}{\ce^2(R^2+\ce^2)}x_1\,.
\end{eqnarray}
Furthermore, every $\phi\in {\Bb}_{\ce,R}$ 
satisfies $\phi(x)=a_\phi+b_\phi x_1+c_\phi x_2$ for every $x\in\partial B_\ce(0)$, for some $a_\phi, b_\phi, c_\phi\in\R$\,, so that the equation in \eqref{natural} can be rewritten as
\begin{equation}\label{newnat}
\begin{cases}
\displaystyle \frac{1-\nu^2}{E}\int_{\partial B_\ce(0)}\partial_{n}\Delta W^s_{0,\ce}\,\ud\mathcal H^1=0\\[3mm]
\displaystyle \int_{\partial B_\ce(0)}x_1\bigg(\frac{1-\nu^2}{E}\partial_{n}\Delta W^s_{0,\ce}+\frac{1+\nu}{E}\frac{1}{\ce}\Big(\nu\Delta W^s_{0,\ce}-\big(\nabla^2W^s_{0,\ce}\big)_{nn}\Big)\bigg)\,\ud\mathcal{H}^1=-s\\[3mm]
\displaystyle \int_{\partial B_\ce(0)}x_2 \bigg(\frac{1-\nu^2}{E}\partial_{n}\Delta W^s_{0,\ce}+\frac{1+\nu}{E}\frac{1}{\ce}\Big(\nu\Delta W^s_{0,\ce}-\big(\nabla^2W^s_{0,\ce}\big)_{nn}\Big)\bigg)\,\ud\mathcal{H}^1=0\,,
\end{cases}
\end{equation}
which follow by straightforward computations from
\eqref{lapla}, \eqref{dernormlapla}, and \eqref{hessnormnorm}.

Now we compute $\blu{\I^{\alpha}_{0,\ce}}(W^s_{0,\ce})$\,. As for the second summand in the right-hand side of \eqref{defJ0}, we have
\begin{equation}\label{calcolocarico}
\frac{s}{2\pi\ce}\int_{\partial B_\ce(0)}\partial_{x_1}W^s_{0,\ce}\,\ud\mathcal{H}^1
=
-\frac{s^2}{4\pi}\frac{E}{1-\nu^2}\bigg(
\log\frac{R}{\ce}-\frac{R^2-\ce^2}{R^2+\ce^2}\bigg)\,.
\end{equation}
Moreover,
\begin{equation*}
\begin{aligned}
\int_{A_{\ce,R}(0)} \!\!\! |\Delta W^s_{0,\ce}|^2\,\ud x=&\,\frac{s^2}{4\pi^2}\frac{E^2}{(1-\nu^2)^2}\int_{A_{\ce,R}(0)}\!\!\!x_1^2\Big(\gamma_\ce+\frac{1}{|x|^2}\Big)^2\,\ud x
=\frac{s^2}{4\pi}\frac{E^2}{(1-\nu^2)^2}\int_{\ce}^R \! \rho^3\bigg(\gamma_\ce+\frac{1}{\rho^2}\Big)^2\,\ud\rho\\
=&\frac{s^2}{4\pi}\frac{E^2}{(1-\nu^2)^2}\Big(\gamma_\ce^2\frac{R^4-\ce^4}{4}+\gamma_\ce(R^2-\ce^2)+\log\frac{R}{\ce}
\bigg)\\
=&\frac{s^2}{4\pi}\frac{E^2}{(1-\nu^2)^2}\bigg(
\log\frac{R}{\ce}-\frac{R^2-\ce^2}{R^2+\ce^2}\bigg)
\end{aligned}
\end{equation*}
and
\begin{equation*}
\begin{aligned}
|\nabla^2\widehat W_{0,\ce}|^2=&\, |\Delta \widehat W_{0,\ce}|^2-2\partial^2_{x_1^2}\widehat W_{0,\ce}\partial^2_{x_2^2}\widehat W_{0,\ce}+2|\partial^2_{x_1x_2}\widehat W_{0,\ce}|^2\\
=&\, 64x_1^2\Big(\gamma_\ce+\frac{1}{|x|^2}\Big)^2+\frac{8}{|x|^6}\beta_\ce^2
-16\gamma_\ce^2x_1^2-\frac{32}{|x|^6}\beta_\ce x_2^2-\frac{32}{|x|^2}\gamma_\ce x_1^2\\
&\, +8(x_1^2-x_2^2)\bigg(-\gamma_\ce^2+2\frac{\beta_\ce\gamma_\ce}{|x|^4}-\frac{4}{|x|^4}-\frac{4}{|x|^2}\gamma_\ce\bigg)\,,\\
\end{aligned}
\end{equation*}
so that, recalling \eqref{abc}, 
\begin{equation*}
\begin{aligned}
\int_{A_{\ce,R}(0)}|\nabla^2W^s_{0,\ce}|^2\,\ud x=&\, 
\frac{s^2}{4\pi}\frac{E^2}{(1-\nu^2)^2}\Big(\log\frac{R}{\ce}-\frac{R^2-\ce^2}{R^2+\ce^2}\Big)\,. 
\end{aligned}
\end{equation*}
%
It follows that
\begin{equation}\label{2201201658}
\begin{aligned}
\mathcal{G}(W^s_{0,\ce};A_{\ce,R}(0))=&\, 
\frac{s^2}{8\pi}\frac{E}{1-\nu^2}\Big(\log\frac{R}{\ce}-\frac{R^2-\ce^2}{R^2+\ce^2}\Big)
\,,
\end{aligned}
\end{equation}
and hence, in view of \eqref{calcolocarico},
$$
\blu{\I_{0,\ce}^{se_2\de_0}}(W^s_{0,\ce})=-\frac{s^2}{8\pi}\frac{E}{1-\nu^2}\Big(\log\frac{R}{\ce}-\frac{R^2-\ce^2}{R^2+\ce^2}\Big)
$$
i.e., \eqref{valmin}.
\end{proof}
The next result follows from the proof of Lemma~\ref{lm:mindis} by straightforward computations.
\begin{corollary}\label{insec4}
Let $s\in\R\setminus\{0\}$\,, $0<\ce<R$ and let {$W_{0,\ce}^{se_2\de_0}$} be the function defined in \eqref{2201181926}.
Then, for every $\ce<r\le R$\,,
\begin{equation*}
\begin{aligned}
\G(W_{0,\ce}^{se_2\de_0}; A_{\ce,r}(0))=&\frac{s^2}{8\pi}\frac{E}{1-\nu^2}\log\frac r \ce+\frac{s^2}{8\pi}\frac{E}{1-\nu^2}\frac{r^2-\ce^2}{R^2+\ce^2}\Big(\frac{r^2+\ce^2}{R^2+\ce^2}-2\Big)\\
&+\frac{s^2}{32\pi}\frac{E}{(1-\nu)^2(1+\nu)}\frac{r^2-\ce^2}{R^2+\ce^2}\Big(\frac{R^2}{r^2}-1\Big)\Big(\frac{r^2+\ce^2}{R^2+\ce^2}\Big(\frac{R^2}{r^2}+1\Big)-2\Big)\,,
\end{aligned}
\end{equation*}
and hence
\begin{equation*}
\G(W_{0,\ce}^{se_2\de_0}; A_{\ce,r}(0))+\frac{s}{2\pi\ce}\int_{\partial B_\ce(0)}
\partial_{x_1}W^{se_2\de_0}_{0,\ce}\,\ud\mathcal{H}^1
=-\frac{s^2}{8\pi}\frac{E}{1-\nu^2}\log\frac{1}{\ce}+\frac{s^2}{8\pi}\frac{E}{1-\nu^2}\log r+f_\ce(r,R;s)\,,
\end{equation*}
where 
\begin{equation}\label{vanerr}
\begin{aligned}
f_\ce(r,R;s)\coloneqq&\,\frac{s^2}{8\pi}\frac{E}{1-\nu^2}\Big(2\frac{R^2-\ce^2}{R^2+\ce^2}+\frac{r^2-\ce^2}{R^2+\ce^2}\Big(\frac{r^2+\ce^2}{R^2+\ce^2}-2\Big)-2\log R\Big)\\
&\,+\frac{s^2}{32\pi}\frac{E}{(1-\nu)^2(1+\nu)}\frac{r^2-\ce^2}{R^2+\ce^2}\Big(\frac{R^2}{r^2}-1\Big)\Big(\frac{r^2+\ce^2}{R^2+\ce^2}\Big(\frac{R^2}{r^2}+1\Big)-2\Big)\,.
\end{aligned}
\end{equation}
\end{corollary}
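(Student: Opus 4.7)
The plan is to derive both formulae by plugging the explicit form of $W^s_{0,\ce}$ on the annulus into the pointwise identities for $\Delta W^s_{0,\ce}$ and $|\nabla^2 W^s_{0,\ce}|^2$ that are already available from the proof of Lemma \ref{lm:mindis}, and then integrating over $A_{\ce,r}(0)$ instead of $A_{\ce,R}(0)$. Concretely, writing
\[
\G(W^s_{0,\ce};A_{\ce,r}(0))=\tfrac12\tfrac{1+\nu}{E}\!\int_{A_{\ce,r}(0)}\!\big(|\nabla^2 W^s_{0,\ce}|^2-\nu|\Delta W^s_{0,\ce}|^2\big)\,\ud x,
\]
the Laplacian integral is handled first: using the identity $\Delta W^s_{0,\ce}(x)=\tfrac{s}{2\pi}\tfrac{E}{1-\nu^2}\,x_1\bigl(\gamma_\ce+|x|^{-2}\bigr)$ established in \eqref{lapla} and polar coordinates, one has $\int_0^{2\pi}\cos^2\vartheta\,\ud\vartheta=\pi$ and the radial integral reduces to $\pi\int_\ce^r\rho^3(\gamma_\ce+\rho^{-2})^2\,\ud\rho=\pi\bigl(\gamma_\ce^2\tfrac{r^4-\ce^4}{4}+\gamma_\ce(r^2-\ce^2)+\log(r/\ce)\bigr)$. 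This is the same computation already carried out in Lemma \ref{lm:mindis} for $r=R$, but now with the upper endpoint $r$ preserved.

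The Hessian integral proceeds in parallel. From the expansion of $|\nabla^2\widehat W_{0,\ce}|^2$ given in the proof of Lemma \ref{lm:mindis},
\[
|\nabla^2\widehat W_{0,\ce}|^2=64x_1^2\!\bigl(\gamma_\ce+\tfrac{1}{|x|^2}\bigr)^{\!2}+\tfrac{8\beta_\ce^2}{|x|^6}-16\gamma_\ce^2x_1^2-\tfrac{32\beta_\ce x_2^2}{|x|^6}-\tfrac{32\gamma_\ce x_1^2}{|x|^2}+8(x_1^2-x_2^2)\!\bigl(\!-\gamma_\ce^2+\tfrac{2\beta_\ce\gamma_\ce}{|x|^4}-\tfrac{4}{|x|^4}-\tfrac{4\gamma_\ce}{|x|^2}\bigr),
\]
angular integration in polar coordinates annihilates the last bracket (since $\int_0^{2\pi}(\cos^2\vartheta-\sin^2\vartheta)\,\ud\vartheta=0$) and reduces all other terms to one-dimensional integrals of the form $\int_\ce^r\rho^{2k+1}\ud\rho$ for $k\in\{-2,-1,0,1\}$, yielding a polynomial expression in $r^2$, $\ce^2$, $\gamma_\ce$, $\beta_\ce$, together with a $\log(r/\ce)$ contribution.

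Substituting $\gamma_\ce=-\tfrac{2}{R^2+\ce^2}$ and $\beta_\ce=\tfrac{2\ce^2R^2}{R^2+\ce^2}$ from \eqref{abc} and assembling the two integrals into $\G(W^s_{0,\ce};A_{\ce,r}(0))$, the logarithmic parts combine as $\tfrac{s^2}{8\pi}\tfrac{E}{1-\nu^2}\log(r/\ce)$ while the rational parts must collapse into the factorized form $\tfrac{r^2-\ce^2}{R^2+\ce^2}\bigl(\tfrac{r^2+\ce^2}{R^2+\ce^2}-2\bigr)$ and $\tfrac{r^2-\ce^2}{R^2+\ce^2}\bigl(\tfrac{R^2}{r^2}-1\bigr)\bigl(\tfrac{r^2+\ce^2}{R^2+\ce^2}\bigl(\tfrac{R^2}{r^2}+1\bigr)-2\bigr)$ claimed in the statement. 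The main obstacle is precisely this algebraic bookkeeping: regrouping the $(1+\nu)/E$, $(1-\nu^2)^{-2}$ and $(1-\nu)^{-2}(1+\nu)^{-1}$ prefactors, and verifying that the rational polynomial in $(r,\ce,R)$ factors in the indicated way. As a sanity check, setting $r=R$ must recover the identity $\G(W^s_{0,\ce};A_{\ce,R}(0))=\tfrac{s^2}{8\pi}\tfrac{E}{1-\nu^2}\bigl(\log(R/\ce)-\tfrac{R^2-\ce^2}{R^2+\ce^2}\bigr)$ of \eqref{2201201658}, which pins down the right normalisation.

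The second identity is then immediate: the boundary integral $\tfrac{s}{2\pi\ce}\int_{\partial B_\ce(0)}\partial_{x_1}W^s_{0,\ce}\,\ud\Huno$ was already computed in \eqref{calcolocarico} and equals $-\tfrac{s^2}{4\pi}\tfrac{E}{1-\nu^2}\bigl(\log(R/\ce)-\tfrac{R^2-\ce^2}{R^2+\ce^2}\bigr)$, independently of $r$. Adding this to the expression for $\G(W^s_{0,\ce};A_{\ce,r}(0))$ derived above, the coefficient of $\log\ce$ becomes $-\tfrac{s^2}{8\pi}\tfrac{E}{1-\nu^2}$, the constant and $r$-dependent remainder packages precisely into the stated function $f_\ce(r,R;s)$ of \eqref{vanerr}, and the logarithmic term $\tfrac{s^2}{8\pi}\tfrac{E}{1-\nu^2}\log r$ is what remains. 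No further analysis is needed.
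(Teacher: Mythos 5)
Your proposal is correct and is precisely the route the paper takes: the paper disposes of Corollary~\ref{insec4} with the remark that it ``follows from the proof of Lemma~\ref{lm:mindis} by straightforward computations'', i.e.\ by repeating the polar-coordinate integrations of $|\Delta W^s_{0,\ce}|^2$ and $|\nabla^2 W^s_{0,\ce}|^2$ with upper radial endpoint $r$ in place of $R$, substituting the constants \eqref{abc}, and then adding the $r$-independent boundary term \eqref{calcolocarico}, exactly as you describe (including the sanity check at $r=R$ against \eqref{2201201658}). The only nit is that your list of radial integrands omits the $\rho^{-5}$ contribution coming from the $8\beta_\ce^2/|x|^6$ term (so the exponents $2k+1$ also include $k=-3$), a harmless slip that does not affect the argument.
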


\section{$\ce$-independent  integral inequalities for $H^2$ functions}
Let $\alpha=\sum_{j=1}^Jb^j\de_{x^j}\in\ED(\Omega)$\,.
Here we prove a Poincar\'{e}-type inequality for functions in $H^2(\Omega_\ce(\alpha))$ where the Poincar\'{e} constant is shown to be independent of $\ce$\,.
The proof is obtained by combining the ``$\varepsilon$-independent Poincar\'{e} inequality'' contained in \cite[Proposition~A.1]{CermelliLeoni06} and the generalized Poincar\'{e} inequality contained in \cite[Theorem~6.1-8(b)]{ciarlet88}, which we recall here.
\begin{proposition}[{\cite[Theorem~6.1-8(b)]{ciarlet88}}]\label{20220527}
Let $\Omega'\subset\Omega$ {be a connected and open set} 
and let $\Gamma_0\subseteq\partial\Omega'$ be a portion of the boundary with $\mathcal{H}^1(\Gamma_0)>0$.
Then there exists a constant $C(\Omega')>0$ depending only on $\Omega'$ such that for every function $u\in H^1(\Omega)$ it holds
\begin{equation}\label{202112230042}
\int_{\Omega'} |u(x)|^2\,\ud x\leq C(\Omega')\bigg(\int_{\Omega'} |\nabla u(x)|^2\,\ud x+\bigg|\int_{\Gamma_0} u(x)\,\ud \Huno(x)\bigg|^2\bigg).
\end{equation}
\end{proposition}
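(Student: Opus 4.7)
The plan is to establish \eqref{202112230042} by a standard compactness-contradiction argument, essentially the same blueprint used for Poincar\'{e}--Wirtinger-type inequalities. I will assume implicitly that $\Omega'$ is a bounded, connected, Lipschitz subdomain (or, more generally, that $\Gamma_0$ meets every connected component of $\Omega'$), so that the Rellich--Kondrachov theorem and the continuity of the trace operator are available.

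Suppose for contradiction that no such constant exists. Then for each $n\in\N$ I can find $u_n\in H^1(\Omega')$ such that $\|u_n\|_{L^2(\Omega')}=1$ and
$$\|\nabla u_n\|^2_{L^2(\Omega')}+\Big|\int_{\Gamma_0}u_n\,\ud\Huno\Big|^2<\frac 1 n.$$
In particular, $\{u_n\}$ is bounded in $H^1(\Omega')$. By the Rellich--Kondrachov theorem, a (not relabeled) subsequence converges strongly in $L^2(\Omega')$ and weakly in $H^1(\Omega')$ to some $u\in H^1(\Omega')$ with $\|u\|_{L^2(\Omega')}=1$. Weak lower semicontinuity of the Dirichlet seminorm forces $\nabla u=0$ almost everywhere, so that $u$ is locally constant on $\Omega'$.

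Next, I will exploit the continuity of the trace operator $H^1(\Omega')\to L^2(\partial\Omega')$ (even better, its compactness) and the weak $H^1$-convergence to deduce that $\int_{\Gamma_0}u_n\,\ud\Huno\to\int_{\Gamma_0}u\,\ud\Huno$, so the limit integral must vanish. Denoting by $c$ the value of the locally constant function $u$ on the connected component containing $\Gamma_0$, the vanishing condition reads $c\,\Huno(\Gamma_0)=0$, which, combined with the hypothesis $\Huno(\Gamma_0)>0$, yields $c=0$. Under the connectedness assumption this propagates to $u\equiv 0$ throughout $\Omega'$, contradicting $\|u\|_{L^2(\Omega')}=1$.

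I expect the only genuine obstacle to be not analytic but geometric: one must be sure that the underlying assumptions on $\Omega'$ ensure that the Rellich compactness and the trace theorem apply, and that the portion $\Gamma_0$ is in ``good position'' with respect to the connected components of $\Omega'$ (so that $u\equiv 0$ can be concluded everywhere, not only on the component touched by $\Gamma_0$). This is precisely the reason why the constant $C(\Omega')$ genuinely depends on the geometry of $\Omega'$ and not only on volumetric quantities. Since the intended application in the paper is for perforated domains where $\Gamma_0\subset\partial\Omega$ and $\Omega'$ has a single ``exterior'' component, these requirements are met.
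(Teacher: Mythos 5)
Your argument is correct, but note that the paper does not prove this statement at all: it is imported verbatim as \cite[Theorem~6.1-8(b)]{ciarlet88}, and the compactness--contradiction scheme you use (Rellich--Kondrachov, weak lower semicontinuity of the Dirichlet seminorm, continuity of the trace to kill the locally constant limit) is precisely the standard proof found in that reference. The geometric caveats you raise are the right ones, and they are harmless in the paper's setting, since there $\Omega'$ is $\Omega$ minus finitely many small disjoint closed balls (hence a connected Lipschitz domain) and $\Gamma_0=\partial\Omega$, so the trace theorem applies and the constant $c=0$ propagates to all of $\Omega'$.
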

Take now $\Omega'\subset\Omega$ such that $\partial\Omega'\supset\partial\Omega$ and $\Gamma_0=\partial\Omega$ in Proposition \ref{20220527}. 
Moreover, let $f$ be a function which is smooth in a neighborhood of $\partial\Omega$. 
Then for every $u\in H^2(\Omega)$ with $u=f$ and $\partial_nu=\partial_n f$ on $\partial\Omega$\,, thanks to Jensen's inequality, formula \eqref{202112230042} reads
\begin{equation}\label{202112230047}
\int_{\Omega'} |u(x)|^2\,\ud x\leq C(\Omega')\int_{\Omega'} |\nabla u(x)|^2\,\ud x+C(\Omega',\partial\Omega)\int_{\partial\Omega} |f(x)|^2\,\ud \Huno(x)\,;
\end{equation}
analogously, noticing that $\nabla (u-f)=0$ on $\partial\Omega$\,, by applying \eqref{202112230042} to $\partial_{x_1} u$ and $\partial_{x_2} u$\,, we obtain  
\begin{equation}\label{202112230050}
\int_{\Omega'} |\nabla u(x)|^2\,\ud x\leq 2C(\Omega')\int_{\Omega'} |\nabla^2 u(x)|^2\,\ud x+C(\Omega',\partial\Omega)\int_{\partial\Omega} |\nabla f(x)|^2\,\ud \Huno(x)\,.
\end{equation}

%
\begin{proposition}[$\ce$-independent Poincar\'{e} inequality]\label{202112230052}
Let $\alpha\in \ED(\Omega)$ and let $0<\ep<\frac D 2$ with $D$ defined in \eqref{distaminima}\,. 
Then there exists a constant $C_1(\Omega,\alpha)>0$ depending only on $\Omega$ and on $\supp\alpha$\,, and  independent of $\ce$\,, such that the following holds true.
For every function $f$ which is smooth in a neighborhood of $\partial\Omega$ and for every $u\in H^2(\Omega)$ with $u=f$ and $\partial_nu=\partial_n f$ on $\partial\Omega$
\begin{multline}\label{202112230053}
\int_{\Omega_\ce(\alpha)} |u(x)|^2\,\ud x+\int_{\Omega_\ce(\alpha)} |\nabla u(x)|^2\,\ud x
\le C_1(\Omega,\alpha) \bigg(\int_{\Omega_\ce(\alpha)} |\nabla^2 u(x)|^2\,\ud x\\
{+\|f\|^2_{L^\infty(\partial\Omega)}+\|\nabla f\|^2_{L^\infty(\partial\Omega)}}
\bigg)\,.
\end{multline}
\end{proposition}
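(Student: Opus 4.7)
The plan is to split the perforated domain $\Omega_\ce(\alpha)$ into a fixed piece that stays away from the singular points and a union of annular collars around them, apply the standard generalized Poincar\'e inequality \eqref{202112230047}--\eqref{202112230050} on the fixed piece, and dominate the annular contributions via an $\ce$-uniform Poincar\'e-type inequality whose data come from traces on the inner boundary of the fixed piece.

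More precisely, first I would fix the $\ce$-independent subdomain
\[
\Omega'\coloneqq \Omega\setminus \bigcup_{j=1}^J \overline{B}_{D/2}(x^j),
\]
which is Lipschitz (even smooth), independent of $\ce$, and contained in $\Omega_\ce(\alpha)$ as soon as $\ce<D/2$. Since $\partial\Omega\subset\partial\Omega'$ with $\mathcal{H}^1(\partial\Omega)>0$, applying \eqref{202112230047} and \eqref{202112230050} with this $\Omega'$ and $\Gamma_0=\partial\Omega$ to any admissible $u\in H^2(\Omega)$ yields, for some constant $C(\Omega,\alpha)>0$ depending only on $\Omega$ and on the set $\supp\alpha$ (through $D$),
\begin{equation}\label{pl:step1}
\int_{\Omega'}\big(|u|^2+|\nabla u|^2\big)\,\ud x \le C(\Omega,\alpha)\bigg(\int_{\Omega'}|\nabla^2 u|^2\,\ud x + \|f\|_{C^\infty(\partial\Omega)}^2\bigg).
\end{equation}
By the standard $H^2$-trace theorem applied to the smooth fixed domain $\Omega'$, \eqref{pl:step1} also controls $\|u\|^2_{H^{3/2}(\partial B_{D/2}(x^j))}$ (and in particular $\|u\|^2_{H^1(\partial B_{D/2}(x^j))}$) for each $j=1,\ldots,J$, still by a constant independent of $\ce$.

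Next I would handle the annuli $A^j_\ce\coloneqq A_{\ce,D/2}(x^j)=B_{D/2}(x^j)\setminus\overline{B}_\ce(x^j)$, whose disjoint union fills $\Omega_\ce(\alpha)\setminus\Omega'$. The key ingredient here is the $\ce$-independent Poincar\'e inequality of \cite[Proposition~A.1]{CermelliLeoni06}: for every $w\in H^1(A^j_\ce)$ one has
\[
\int_{A^j_\ce}|w|^2\,\ud x \le C\bigg(\int_{A^j_\ce}|\nabla w|^2\,\ud x + \int_{\partial B_{D/2}(x^j)}|w|^2\,\ud\Huno\bigg),
\]
with $C$ depending on $D$ but \emph{not} on $\ce$ (the radial structure of the annulus prevents the blow-up as $\ce\to 0$). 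Applying this to $w=u$ and, componentwise, to $w=\partial_{x_i}u$ ($i=1,2$), and bounding the boundary traces on $\partial B_{D/2}(x^j)$ via \eqref{pl:step1} and the trace theorem on $\Omega'$, gives
\begin{equation}\label{pl:step2}
\int_{A^j_\ce}\big(|u|^2+|\nabla u|^2\big)\,\ud x \le C(\Omega,\alpha)\bigg(\int_{A^j_\ce}|\nabla^2 u|^2\,\ud x + \int_{\Omega'}|\nabla^2 u|^2\,\ud x + \|f\|_{C^\infty(\partial\Omega)}^2\bigg).
\end{equation}

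Finally I would sum \eqref{pl:step1} and \eqref{pl:step2} over $j=1,\dots,J$ and use $\Omega_\ce(\alpha)=\Omega'\sqcup \bigcup_j A^j_\ce$ to obtain \eqref{202112230053} with a constant $C_1(\Omega,\alpha)$ depending only on $\Omega$ and on $\supp\alpha$. The main obstacle in this argument is the $\ce$-independence of the Poincar\'e constant on the shrinking annuli $A^j_\ce$: this is not a matter of a direct scaling, as the radial modes behave differently from the tangential ones as $\ce\to 0$, but is precisely what is supplied by the Cermelli--Leoni inequality; everything else is standard trace theory on the fixed Lipschitz domain $\Omega'$.
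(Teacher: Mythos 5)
Your proposal is correct and follows essentially the same route as the paper: the paper likewise splits $\Omega_\ce(\alpha)$ into the fixed region $\Omega_{D/2}(\alpha)$, treated via the generalized Poincar\'e inequalities \eqref{202112230047}--\eqref{202112230050} with the boundary datum $f$ on $\partial\Omega$, and the annuli $A_{\ce,D/2}(x^j)$, treated with the Cermelli--Leoni radial argument, applied first to $u$ and then to $\partial_{x_1}u$ and $\partial_{x_2}u$. The only minor difference is that the paper does not need a trace theorem on $\partial B_{D/2}(x^j)$: it reproduces the radial fundamental-theorem-of-calculus estimate and averages the outer-circle term over radii $\rho\in[D/2,D]$, turning it into a bulk integral over $A_{D/2,D}(x^j)\subset\Omega_{D/2}(\alpha)$, which is then absorbed by the generalized Poincar\'e inequality on the fixed region.
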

\begin{proof}
The proof follows \cite[Proposition~A.1]{CermelliLeoni06}. 
We recall  here the main lines of the proof for the reader's convenience. 

Let $j=1,\ldots,J$ be fixed and let  the pair $(r;\vartheta)$ denote the polar coordinates centered at $x^j$\,. 
Let $\ce\leq s \leq D/2 \leq\rho<D$ (with $D$ defined in \eqref{distaminima}) and let $\vartheta\in[0,2\pi]$. By the Fundamental Theorem of Calculus, we can write
$$
u(s,\vartheta)=u(\rho,\vartheta)-\int_s^\rho \frac{\partial u}{\partial r}(r,\vartheta)\,\ud r\,,
$$
so that (by recalling that $(a-b)^2\leq 2a^2+2b^2$)
$$
|u(s,\vartheta)|^2\leq2|u(\rho,\vartheta)|^2+2\bigg|\int_s^\rho \frac{\partial u}{\partial r}(r,\vartheta)\,\ud r\bigg|^2
$$
and in turn, by Jensen's inequality, 
$$|u(s,\vartheta)|^2\leq
2|u(\rho,\vartheta)|^2+2(\rho-s)\int_s^\rho \bigg|\frac{\partial u}{\partial r}(r,\vartheta)\bigg|^2\,\ud r\leq
2|u(\rho,\vartheta)|^2+2D\int_s^D \bigg|\frac{\partial u}{\partial r}(r,\vartheta)\bigg|^2\,\ud r\,.
$$
We now multiply by $s$ and integrate with respect to $\vartheta$ to obtain
\begin{equation}\label{202112271025}
\begin{split}
\int_0^{2\pi} s|u(s,\vartheta)|^2\,\ud\vartheta\leq &\, 2s\int_0^{2\pi}|u(\rho,\vartheta)|^2\,\ud\vartheta+2D\int_0^{2\pi}\int_s^D \bigg|\frac{\partial u}{\partial r}(r,\vartheta)\bigg|^2\,s\,\ud r\,\ud\vartheta \\
\leq&\, 2 \int_0^{2\pi}|u(\rho,\vartheta)|^2\,\rho\,\ud\vartheta+2D\int_0^{2\pi}\int_\ce^D |\nabla u(r,\vartheta)|^2\,r\,\ud r\,\ud \vartheta \\
=&\, 2 \int_0^{2\pi}|u(\rho,\vartheta)|^2\,\rho\,\ud\vartheta+2D\int_{A_{\ce,D}(x^j)} |\nabla u(x)|^2\,\ud x\,.
\end{split}
\end{equation}
We now integrate with respect to $s$ in $\big[\ce,\frac D 2\big]$ (notice that the right-hand side does not depend on~$s$) to get 
$$\int_\ce^{\frac D 2}\int_0^{2\pi} |u(s,\vartheta)|^2\,s\,\ud s\,\ud\vartheta\leq 2\Big(\frac{D}2-\ce\Big)\bigg(\int_0^{2\pi}|u(\rho,\vartheta)|^2\,\rho\ud\vartheta+D\int_{A_{\ce,D}(x^j)} |\nabla u(x)|^2\,\ud x\bigg)\,,$$
whence
$$\int_{A_{\ce,\frac D 2}(x^j)}|u(x)|^2\,\ud x\leq D\int_0^{2\pi}|u(\rho,\vartheta)|^2\,\rho\,\ud\vartheta+D^2\int_{A_{\ce,D}(x^j)} |\nabla u(x)|^2\,\ud x\,;$$
an integration with respect to $\rho$ in $\big[\frac D 2,D\big]$ now yields
\[\begin{split}
\int_{A_{\ce,\frac D 2}(x^j)}|u(x)|^2\,\ud x\leq&\, 2\int_{A_{\frac D 2,D}(x^j)}|u(x)|^2\,\ud x+D^2\int_{A_{\ce,D}(x^j)} |\nabla u(x)|^2\,\mathrm{d}x \\
\leq&\, 2\int_{\Omega_{\frac D 2}(\alpha)}|u(x)|^2\,\ud x+D^2\int_{A_{\ce,D}(x^j)} |\nabla u(x)|^2\,\ud x \\
\leq&\, 2C\big(\Omega_{\frac D 2}(\alpha)\big)\int_{\Omega_{\frac D 2}(\alpha)}|\nabla u(x)|^2\,\ud x+2C\big(\Omega_{\frac D 2}(\alpha)\big)\bigg|\int_{\partial\Omega} f(x)\,\ud \Huno(x)\bigg|^2\\
&+D^2\int_{A_{\ce,D}(x^j)} |\nabla u(x)|^2\,\ud x\,,
\end{split}\]
where we have used \eqref{202112230047} in the last inequality.
Therefore, by using \eqref{202112230047} again, we have
\begin{equation}\label{202112271051}
\begin{split}
\int_{\Omega_\ce(\alpha)} |u(x)|^2\,\ud x=&\, \sum_{j=1}^J \int_{A_{\ce,\frac D 2}(x^j)} |u(x)|^2\,\ud x+\int_{\Omega_{\frac D 2}(\alpha)} |u(x)|^2\,\ud x \\
\leq&\, (2J+1)C\big(\Omega_{\frac D 2}(\alpha)\big) \int_{\Omega_{\frac D 2}(\alpha)} |\nabla u(x)|^2\,\ud x+D^2 \sum_{j=1}^J \int_{A_{\ce,D}(x^j)}  |\nabla u(x)|^2\,\ud x \\
&\, +2(J+1)C\big(\Omega_{\frac D 2}(\alpha),\partial\Omega\big) \int_{\partial\Omega} |f(x)|^2\,\ud \Huno(x)\\
\leq&\, (2J+1)C_{\Omega_{\frac D 2}(\alpha)} \int_{\Omega_\ce(\alpha)} |\nabla u(x)|^2\,\ud x+JD^2 \int_{\Omega_\ce(\alpha)}  |\nabla u(x)|^2\,\ud x \\
&\, +2(J+1)C\big(\Omega_{\frac D 2}(\alpha),\partial\Omega\big) \int_{\partial\Omega} |f(x)|^2\,\ud \Huno(x)\\
\leq&\, \max\big\{(2J+1)C\big(\Omega_{\frac D 2}(\alpha)\big),JD^2\big\} \int_{\Omega_\ce(\alpha)} |\nabla u(x)|^2\ud x\\
&\,+2(J+1)C\big(\Omega_{\frac D 2}(\alpha),\partial\Omega\big) \int_{\partial\Omega} |f(x)|^2\,\ud \Huno(x)\\
\le&\, \widetilde C_1(\Omega,\alpha) \bigg(\int_{\Omega_\ce(\alpha)} |\nabla u(x)|^2\,\ud x+
\|f\|^2_{L^\infty(\partial\Omega)}\bigg)\,,
\end{split}
\end{equation}
where we have set $\widetilde C_1(\Omega,\alpha)\coloneqq\max\big\{(2J+1)C\big(\Omega_{\frac D 2}(\alpha)\big),JD^2\big\}+2(J+1)C\big(\Omega_{\frac D 2}(\alpha)\big)$\,.
By repeating the same reasoning for $\partial_{x_1} u$ and $\partial_{x_2} u$ and by using \eqref{202112230050} in place of \eqref{202112230047}, we obtain
\begin{equation*}
\begin{aligned}
\int_{\Omega_\ce(\alpha)} |\nabla u(x)|^2\,\ud x\le& 2 
\widetilde C^2_1(\Omega,\alpha) \bigg(\int_{\Omega_\ce(\alpha)} |\nabla^2 u(x)|^2\,\ud x+
\|\nabla f\|^2_{L^\infty(\partial\Omega)}
\bigg)\,,
\end{aligned}
\end{equation*}
and the proposition is proved with $C_1(\Omega,\alpha)\coloneqq 3\widetilde C_1^2(\Omega,\alpha)$\,.  
\end{proof}

\begin{proposition}[$\ce$-independent trace inequality]\label{PropA6CL}
Let $\alpha\in \ED(\Omega)$ and let $\ep>0$ satisfy \eqref{distaminima}\,. Then, there exists a constant $C_2(\Omega,\alpha)
>0$ depending only on $\Omega$ and on $\supp\alpha$\,, and independent of $\ce$\,, such that,
for every function $f$ which is smooth in a neighborhood of $\partial\Omega$ and for every $u\in H^2(\Omega)$ with $u=f$ and $\partial_nu=\partial_n f$ on $\partial\Omega$\,,
the following fact holds true:
\begin{equation*}
\int_{\partial\Omega_\ce(\alpha)} |u(x)|^2\,\ud\Huno(x)+\int_{\partial\Omega_\ce(\alpha)} |\nabla u(x)|^2\,\ud\Huno(x) \leq C_2(\Omega,\alpha) \Big(\int_{\Omega_\ce(\alpha)} |\nabla^2 u(x)|^2\,\ud x+\|f\|^2_{C^\infty(\partial\Omega)}\Big)\,.
\end{equation*}
\end{proposition}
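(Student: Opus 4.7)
The plan is to handle the two pieces of the boundary $\partial\Omega_\ce(\alpha)=\partial\Omega\cup\bigcup_{j=1}^{J}\partial B_\ce(x^j)$ separately, treating the outer boundary via the Dirichlet-type conditions $u=f$, $\partial_n u=\partial_n f$, and the small circles by an $\ce$-independent polar-coordinate trace estimate in the spirit of \cite[Proposition~A.1]{CermelliLeoni06} (already used in the proof of Proposition~\ref{202112230052}). Finally I would absorb the bulk norms that appear using the Poincar\'e inequality \eqref{202112230053}.

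\textbf{Outer boundary.} On $\partial\Omega$ the boundary conditions give $|u|=|f|$ and $|\nabla u|\le |\nabla f|$ pointwise (the tangential component of $\nabla u$ equals that of $\nabla f$ since $u=f$ on $\partial\Omega$, while the normal components coincide by assumption). Hence
$$\int_{\partial\Omega}\!\!\big(|u|^2+|\nabla u|^2\big)\,\ud\Huno\le \Huno(\partial\Omega)\,\|f\|^2_{C^1(\partial\Omega)}\le C(\Omega)\,\|f\|^2_{C^\infty(\partial\Omega)}.$$

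\textbf{Small circles.} Fix $j\in\{1,\dots,J\}$ and use polar coordinates $(r,\vartheta)$ centered at $x^j$. By the fundamental theorem of calculus, for every $\vartheta\in[0,2\pi]$ and every $\rho\in[\tfrac{D}{2},D]$,
$$|u(\ce,\vartheta)|^2\le 2|u(\rho,\vartheta)|^2+2D\int_\ce^D|\nabla u(r,\vartheta)|^2\,\ud r.$$
Averaging in $\rho$ over $[\tfrac{D}{2},D]$, multiplying by $\ce$, integrating in $\vartheta$, and exploiting the crucial inequality $\ce\le r$ on $[\ce,D]$ (so that $\ce\int_\ce^D|\nabla u|^2\,\ud r\le \int_\ce^D|\nabla u|^2 r\,\ud r$) together with $\rho\ge D/2$ on the other term, yields
$$\int_{\partial B_\ce(x^j)}\!\!|u|^2\,\ud\Huno\le \frac{8}{D}\int_{A_{D/2,D}(x^j)}\!\!|u|^2\,\ud x+2D\int_{A_{\ce,D}(x^j)}\!\!|\nabla u|^2\,\ud x,$$
with an \emph{$\ce$-independent} constant. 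The same argument applied to $\partial_{x_1}u$ and $\partial_{x_2}u$ (which lie in $H^1(\Omega_\ce(\alpha))$) gives the analogous bound
$$\int_{\partial B_\ce(x^j)}\!\!|\nabla u|^2\,\ud\Huno\le \frac{8}{D}\int_{A_{D/2,D}(x^j)}\!\!|\nabla u|^2\,\ud x+2D\int_{A_{\ce,D}(x^j)}\!\!|\nabla^2 u|^2\,\ud x.$$

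\textbf{Conclusion.} Summing over $j=1,\dots,J$ and adding the outer-boundary bound, all bulk integrals on the right are controlled by $\|u\|_{H^2(\Omega_\ce(\alpha))}^2$-type quantities; the $L^2$ and $H^1$-seminorm contributions are then absorbed using Proposition~\ref{202112230052} (applied with the same $f$) which reduces them to $\int_{\Omega_\ce(\alpha)}|\nabla^2 u|^2\,\ud x+\|f\|^2_{C^\infty(\partial\Omega)}$ with $\ce$-independent constants. Combining everything yields the desired inequality with a constant $C_2(\Omega,\alpha)$ depending only on $\Omega$ and $\supp\alpha$ through the geometric quantity $D$ defined in \eqref{distaminima} and through the constant $C_1(\Omega,\alpha)$ of Proposition~\ref{202112230052}.

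\textbf{Main difficulty.} The only delicate point is producing an $\ce$-independent bound on $\int_{\partial B_\ce(x^j)}|u|^2\,\ud\Huno$: naively the factor $\ce$ from the arclength element vanishes but must be matched against the unboundedness of the radial integrand as $\ce\to 0$. The choice of multiplying by $\ce$ (rather than by the running radius $s$ as in the proof of Proposition~\ref{202112230052}) and then using $\ce\le r$ to convert $\ud r$ into the polar area element $r\,\ud r$ is exactly what makes the constant independent of $\ce$; the remaining steps are routine and parallel to \cite[Proposition A.1]{CermelliLeoni06}.
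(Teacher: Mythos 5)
Your proof is correct, and its skeleton is the same as the paper's: control the traces of $u$ and $\nabla u$ on $\partial\Omega_\ce(\alpha)$ by $\ce$-independent bulk quantities, then absorb the lower-order terms $\|u\|_{L^2}$ and $\|\nabla u\|_{L^2}$ via the $\ce$-independent Poincar\'e inequality of Proposition~\ref{202112230052}. The only difference is in how the trace step is obtained: the paper simply invokes \cite[Proposition~A.6]{CermelliLeoni06}, which gives $\int_{\partial\Omega_\ce(\alpha)}|v|^2\,\ud\Huno\le C(\Omega,\alpha)\big(\|v\|^2_{L^2(\Omega_\ce(\alpha))}+\|\nabla v\|^2_{L^2(\Omega_\ce(\alpha))}\big)$ for $v\in H^1(\Omega)$, and applies it with $v=u,\partial_{x_1}u,\partial_{x_2}u$; you instead split the boundary, reprove the estimate on the small circles by the radial fundamental-theorem-of-calculus plus averaging argument (with the key observation $\ce\le r$ turning $\ud r$ into the polar measure $r\,\ud r$, exactly as in the proof of Proposition~\ref{202112230052}), and handle $\partial\Omega$ directly from the Dirichlet data $u=f$, $\partial_n u=\partial_n f$. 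What your version buys is self-containedness (no external trace lemma) and a slightly sharper treatment of the outer boundary, where the data give $|u|=|f|$ and $|\nabla u|=|\nabla f|$ pointwise; what the paper's version buys is brevity, since the cited result already covers the whole perforated boundary at once. Both yield the same constant structure, depending only on $\Omega$ and $\supp\alpha$ through $D$ and the Poincar\'e constant.
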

\begin{proof}
By \cite[Proposition~A.6]{CermelliLeoni06}, there exists a constant $C(\Omega,\alpha)$ depending only on $\Omega$ and on $\supp\alpha$ such that, for any function $v\in H^1(\Omega)$\,, there holds
\begin{equation}\label{propA6diCL}
\int_{\partial\Omega_\ce(\alpha)}|v|^2\,\ud x\le C(\Omega,\alpha)\Big(\int_{\Omega_\ce(\alpha)}|v|^2\,\ud x+\int_{\Omega_\ce(\alpha)}|\nabla v|^2\,\ud x\Big)\,. 
\end{equation}
We conclude by applying \eqref{propA6diCL} with $v=u$, $v=\partial_{x_1}u$, and $v=\partial_{x_2}u$ and using \eqref{202112230053}. 
\end{proof}
\begin{proposition}\label{2201202252}
Let $\alpha=\sum_{j=1}^Jb^j\de_{x^j}\in \ED(\Omega)$ and let
$\ce>0$ satisfy \eqref{distaminima}.
For every $j=1,\ldots,J$ let $f^j$ and $a^j_\ce$ be two functions with $f^j\in C^\infty(B_{\frac D 2}(x^j))$ and $a^j_\ce$ affine.
Moreover, let $f$ be a function which is smooth in a neighborhood of $\partial\Omega$ and $u\in H^2(\Omega_{\ce}(\alpha))$ be such that $u=f$ and $\partial_n u=\partial_nf$ on $\partial\Omega$ and $u=a_\ce^j+f^j$ and $\partial_nu=\partial_na_\ce^j+\partial_n f^j$ on $\partial B_\ce(x^j)$ for every $j=1,\ldots,J$\,.
 Then the function $\widehat u\colon\Omega\to\R$ defined by
$$
\widehat u(x)\coloneqq\begin{cases}
u(x) & \text{if $x\in\Omega_\ce(\alpha)$}\\
a^j_\ce(x)+f^j & \text{if $x\in B_{\ce}(x^j)$}
\end{cases}$$
is in $H^2(\Omega)$ and satisfies
 \begin{equation*}
\|\widehat{u}\|_{H^2(\Omega)}
\le C \Big(\|\nabla^2 u\|_{L^2(\Omega_\ce(\alpha);\R^{2\times 2})}+\|f\|_{C^\infty(\partial\Omega)}+\sum_{j=1}^J\|f^j\|_{C^\infty(B_{\frac {D}{2}}(x^j))}\Big)\,,
\end{equation*}
for some constant $C$ independent of $u$ and of $\ce$\,.
\end{proposition}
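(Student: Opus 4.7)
The plan is to prove the statement in two phases: first verify that $\widehat u\in H^2(\Omega)$ by checking that the gluing across each circle $\partial B_\ce(x^j)$ is compatible with $H^2$-regularity, and then bound $\|\widehat u\|_{H^2(\Omega)}$ by splitting the norm into the contributions from $\Omega_\ce(\alpha)$ and from the individual balls $B_\ce(x^j)$, controlling each piece via the Poincar\'e and trace inequalities of this appendix.

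For the first phase: the hypothesis guarantees that the inner and outer traces of $\widehat u$ and of $\partial_n\widehat u$ agree on $\partial B_\ce(x^j)$; since the tangential derivative is determined by the boundary value, the full gradient also matches in the trace sense, and a standard gluing argument (see, e.g., the characterization of $H^2$ functions via Lipschitz decomposition of the domain) yields $\widehat u\in H^2(\Omega)$. Once this is established, the splitting
\begin{equation*}
\|\widehat u\|_{H^2(\Omega)}^2=\|u\|_{H^2(\Omega_\ce(\alpha))}^2+\sum_{j=1}^J\|a_\ce^j+f^j\|_{H^2(B_\ce(x^j))}^2
\end{equation*}
reduces the task to two separate bounds. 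The outer term is immediate from Proposition~\ref{202112230052}, which gives
\begin{equation*}
\|u\|_{H^2(\Omega_\ce(\alpha))}^2\le C\bigl(\|\nabla^2 u\|_{L^2(\Omega_\ce(\alpha);\R^{2\times2})}^2+\|f\|_{C^\infty(\partial\Omega)}^2\bigr),
\end{equation*}
with $C$ independent of $\ce$.

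The delicate part is estimating each ball contribution uniformly in $\ce$. Write $a_\ce^j(x)=c_0^j+c_1^j(x_1-x_1^j)+c_2^j(x_2-x_2^j)$. Since $\|f^j\|_{H^2(B_\ce(x^j))}\le C\,\|f^j\|_{C^\infty(B_{D/2}(x^j))}$ trivially (with $C$ depending only on $D$), it suffices to bound $\|a_\ce^j\|_{H^2(B_\ce(x^j))}^2=\pi\ce^2|c_0^j|^2+\tfrac{\pi\ce^4}{4}\bigl((c_1^j)^2+(c_2^j)^2\bigr)+\pi\ce^2\bigl((c_1^j)^2+(c_2^j)^2\bigr)$. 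The coefficients are recovered from the trace of $u-f^j$ on $\partial B_\ce(x^j)$ by averaging:
\begin{equation*}
c_0^j=\frac{1}{2\pi\ce}\int_{\partial B_\ce(x^j)}(u-f^j)\,\ud\Huno,\qquad c_i^j=\frac{1}{2\pi\ce}\int_{\partial B_\ce(x^j)}(\partial_{x_i} u-\partial_{x_i} f^j)\,\ud\Huno,
\end{equation*}
where the second identity uses that $\nabla a_\ce^j=\nabla u-\nabla f^j$ on $\partial B_\ce(x^j)$ (itself a consequence of the matching of trace and normal derivative together with the tangential derivative formula). By Cauchy--Schwarz,
\begin{equation*}
|c_0^j|^2\le\frac{1}{2\pi\ce}\int_{\partial B_\ce(x^j)}|u-f^j|^2\,\ud\Huno,\qquad |c_i^j|^2\le\frac{1}{2\pi\ce}\int_{\partial B_\ce(x^j)}|\nabla u-\nabla f^j|^2\,\ud\Huno,
\end{equation*}
and Proposition~\ref{PropA6CL} together with the smoothness of $f^j$ gives
\begin{equation*}
\int_{\partial B_\ce(x^j)}(|u-f^j|^2+|\nabla u-\nabla f^j|^2)\,\ud\Huno\le C\bigl(\|\nabla^2 u\|_{L^2(\Omega_\ce(\alpha))}^2+\|f\|_{C^\infty(\partial\Omega)}^2+\|f^j\|_{C^\infty(B_{D/2}(x^j))}^2\bigr).
\end{equation*}
Multiplying by the appropriate powers of $\ce$ from the volume terms shows that the factors $\pi\ce^2|c_0^j|^2$ and $\pi\ce^2(|c_1^j|^2+|c_2^j|^2)$ are bounded by $C\ce$ times the right-hand side and in particular are $\ce$-uniformly controlled. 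Summing over $j$ and combining with the outer estimate yields the claim.

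The main obstacle is purely book-keeping: one must verify that the inverse power $1/\ce$ hidden in the averaging of the affine coefficients is compensated by the $\ce^2$ factor from integration over the ball so that the final bound is uniform as $\ce\to 0$. The compatibility of the pointwise identification $\nabla a_\ce^j=\nabla u-\nabla f^j$ on $\partial B_\ce(x^j)$ with the $H^2$-trace formalism is the conceptual step, but it follows directly from the matching of the Dirichlet and Neumann data, so no further regularity result beyond Propositions~\ref{202112230052} and~\ref{PropA6CL} is needed.
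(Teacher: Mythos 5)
Your proposal is correct and follows essentially the same route as the paper: the key mechanism in both is the $\ce$-independent trace inequality (Proposition~\ref{PropA6CL}) applied on the circles $\partial B_\ce(x^j)$, combined with the observation that for the affine part the passage from the boundary circle to the ball gains a factor $\ce$, which compensates the $1/\ce$ from the averaging; the paper states this as the scaling inequality $\|a^j_\ce\|^2_{H^1(B_\ce(x^j))}\le C\ce\,\|a^j_\ce\|^2_{H^1(\partial B_\ce(x^j))}$, while you reproduce the same computation via explicit coefficient averages. Your extra steps (the $H^2$ gluing across the interfaces and the outer estimate via Proposition~\ref{202112230052}) are points the paper leaves implicit, so there is no substantive difference.
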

\begin{proof}
By assumption and by Proposition \ref{PropA6CL}, we have
\begin{equation*}
\sum_{j=1}^J\|a^j_\ce+f^j\|^2_{H^1(\partial B_\ce(x^j))}\le JC_2(\Omega,\alpha)\Big(\|\nabla^2 u\|^2_{L^2(\Omega_\ce(\alpha);\R^{2\times 2})}+\|f\|^2_{C^\infty(\partial\Omega)} \Big)\,,
\end{equation*}
which implies, in particular, 
\begin{equation*}
\sum_{j=1}^J\|a^j_\ce\|^2_{H^1(\partial B_\ce(x^j))}\le JC_2(\Omega,\alpha)\Big(\|\nabla^2 u\|^2_{L^2(\Omega_\ce(\alpha);\R^{2\times 2})}+\|f\|^2_{C^\infty(\partial\Omega)}+\sum_{j=1}^J\|f^j\|^2_{H^1(\partial B_\ce(x^j))}\Big)\,.
\end{equation*}
Since $a_\ce^j$ is affine, this implies that, for every $j=1,\ldots,J$\,,
\begin{equation*}
\|a^j_\ep\|^2_{H^1(B_\ep(x^j))}\le JC_2(\Omega,\alpha)\ep\Big(\|\nabla^2 u\|^2_{L^2(\Omega_\ce(\alpha);\R^{2\times 2})}+\|f\|^2_{C^\infty(\partial\Omega)}+\sum_{j=1}^J\|f^j\|^2_{H^1(\partial B_\ce(x^j))}\Big)\,,
\end{equation*}
which immediately provides the claim.
\end{proof}


\section{A density result for traction-free $H^2$ functions}\label{sec:density}
In this appendix we prove that, given $\alpha=\sum_{j=1}^Jb^j\de_{x^j}\in\ED(\Omega)$\,, any function $w\in \widetilde{\Bb}_{0,\Omega}^\alpha$ (see \eqref{tildeBzero}) can be approximated in the strong $H^2$ norm by a sequence of functions $w_\ce\in\widetilde{\Bb}_{\ce,\Omega}^\alpha$ (see \eqref{tildeBdelta}). 
The rough idea is (up to modifying the boundary datum) to replace $w+W_0^\alpha$ (see Remark \ref{maybeuseful})\,, with its first-order Taylor expansion in $B_{\ce}(x^j)$ ($j=1,\ldots,J$)\,.
We highlight that $W_0^\alpha$ is not even in $H^2(\Omega)$ but, in view of Remark \ref{maybeuseful}, it is the strong $H^2_\loc$ limit of $W_\ce^\alpha:=\sum_{j=1}^JW^j_\ep$\,, where $W^j_\ce$ is affine in $B_\ep(x^j)$ and smooth in $B_\ce(x^i)$ with $i\neq j$\,. This allows us to approximate $W_0^\alpha$ 
in the desired manner.
Then we approximate $w$ by a sequence $\{v_k\}_k$ of smooth functions and we apply Taylor's formula with Lagrange remainder to further approximate each $v_k$ by a sequence $\{v_{k,\ep}\}_\ce$ that is affine in $\bigcup_{j=1}^JB_\ce(x^j)$\,.
Finally, the claim is obtained by summing $v_{k,\ep}$ 
to the contribution approximating $W_0^\alpha$\,, and by using a diagonal argument.


\begin{proposition}\label{prop:approx}
Let $\alpha\in\ED(\Omega)$\,. For every $w\in\widetilde{\Bb}_{0,\Omega}^\alpha$ 
 there exists a sequence $\{w_\ce\}_\ce\subset H^2(\Omega)$ with $w_\ce\in\widetilde{\Bb}_{\ce,\Omega}^\alpha$ 
for $\ce>0$ small enough, such that $w_\ce\to w$ strongly in $H^2(\Omega)$ as $\ce\to 0$\,.
\end{proposition}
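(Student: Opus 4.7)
The plan is to construct $w_\ce$ in three stages: first regularize $w$ by smooth functions that already satisfy the boundary conditions required by $\widetilde{\Bb}_{0,\Omega}^\alpha$; then, for each such smooth $v_k$, apply two local surgeries---a first-order Taylor truncation on each ball $B_\ce(x^j)$ to force the affine behavior, and a small boundary-collar lifting to adjust the trace from $-W_0^\alpha$ to $-W_\ce^\alpha$; finally, extract a diagonal sequence in $k$ and $\ce$. The only real delicacy will be to arrange the two surgeries so that their supports do not interact.

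For the smoothing step, I would pick any $F\in C^\infty(\overline\Omega)$ with $F\equiv -W_0^\alpha$ in a boundary collar $U$ disjoint from $\{x^1,\dots,x^J\}$; such an $F$ exists because the singularities of $W_0^\alpha$ lie in the interior (multiply $-W_0^\alpha$ by a smooth cutoff supported away from the $x^j$'s). By the very definition of $\widetilde{\Bb}_{0,\Omega}^\alpha$, $w-F\in H^2_0(\Omega)$, so standard density yields $\phi_k\in C_c^\infty(\Omega)$ with $\phi_k\to w-F$ in $H^2(\Omega)$, and $v_k:=F+\phi_k$ produces smooth approximations $v_k\to w$ in $H^2(\Omega)$ satisfying $v_k=-W_0^\alpha$ and $\partial_n v_k=-\partial_n W_0^\alpha$ on $\partial\Omega$ (since each $\phi_k$ is compactly supported in $\Omega$).

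For the local corrections, I would decompose $W_\ce^\alpha=W_\ce^j+g_\ce^j$ on $B_{2\ce}(x^j)$, where $g_\ce^j:=\sum_{i\neq j}W_\ce^i$ is smooth and uniformly bounded in $C^\infty(B_{2\ce}(x^j))$ as $\ce\to 0$ by Remark~\ref{maybeuseful}. Let $T^j_{k,\ce}$ denote the first-order Taylor polynomial of $v_k+g_\ce^j$ at $x^j$, set $R^j_{k,\ce}:=v_k+g_\ce^j-T^j_{k,\ce}$, and choose cutoffs $\varphi^j_\ce\in C_c^\infty(B_{2\ce}(x^j))$ with $\varphi^j_\ce\equiv 1$ on $B_\ce(x^j)$ and $|\nabla^l\varphi^j_\ce|\le C\ce^{-l}$ for $l=0,1,2$. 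Since $W_\ce^\alpha\to W_0^\alpha$ in $C^\infty(\overline U)$, a bounded lifting produces $\psi_\ce\in H^2(\Omega)$, supported in $U$ (hence disjoint from all $B_{2\ce}(x^j)$ for $\ce$ small), with trace $-(W_\ce^\alpha-W_0^\alpha)|_{\partial\Omega}$ and normal derivative $-\partial_n(W_\ce^\alpha-W_0^\alpha)|_{\partial\Omega}$, and with $\|\psi_\ce\|_{H^2(\Omega)}\to 0$ independently of $k$. Then I would define
\[
w_{k,\ce}\coloneqq v_k+\psi_\ce-\sum_{j=1}^J\varphi^j_\ce\,R^j_{k,\ce}.
\]
Using the disjoint supports of $\psi_\ce$ and the $\varphi^j_\ce$, one checks that $w_{k,\ce}\in\widetilde{\Bb}_{\ce,\Omega}^\alpha$: the lifting cancels $W_\ce^\alpha-W_0^\alpha$ on $\partial\Omega$, while on $B_\ce(x^j)$ one obtains $w_{k,\ce}+W_\ce^\alpha=T^j_{k,\ce}+W_\ce^j$, affine as a sum of affine functions.

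For the quantitative convergence and diagonal extraction, Taylor's theorem with Lagrange remainder gives the pointwise bounds $|R^j_{k,\ce}|\le C_k|x-x^j|^2$, $|\nabla R^j_{k,\ce}|\le C_k|x-x^j|$ and $|\nabla^2 R^j_{k,\ce}|\le C_k$ on $B_{2\ce}(x^j)$, with $C_k$ depending only on the $C^2$-norm of $v_k+g_\ce^j$ (uniformly in $\ce$ by Remark~\ref{maybeuseful}); combining these with the $\ce^{-l}$ growth of the derivatives of $\varphi^j_\ce$, a direct product-rule calculation yields $\|\varphi^j_\ce R^j_{k,\ce}\|_{H^2(\Omega)}\le C_k\,\ce$. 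Together with $\|\psi_\ce\|_{H^2(\Omega)}\to 0$ as $\ce\to 0$ uniformly in $k$, this shows $w_{k,\ce}\to v_k$ in $H^2(\Omega)$ as $\ce\to 0$ for each fixed $k$. Since also $v_k\to w$ in $H^2(\Omega)$, a standard diagonal extraction supplies $k(\ce)\to\infty$ as $\ce\to 0$ such that $w_\ce:=w_{k(\ce),\ce}$ converges to $w$ in $H^2(\Omega)$. The hardest part is genuinely the bookkeeping around disjointness of supports: the boundary lifting $\psi_\ce$ must be localized strictly in the collar $U$, and the Taylor-truncation cutoffs must remain uniformly far from $\partial\Omega$, so that each piece corrects exactly one obstruction to membership in $\widetilde{\Bb}_{\ce,\Omega}^\alpha$ without spoiling the other; this is possible precisely because the singular set $\{x^j\}$ is finite and interior.
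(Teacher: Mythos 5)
Your proof is correct and follows essentially the same strategy as the paper's: smooth approximation of $w$ with the boundary datum $-W^\alpha_0$ built in, first-order Taylor truncation near each $x^j$ at scale $\ce$ glued by cutoffs with the standard $\ce^{-l}$ derivative bounds, a correction of vanishing $H^2$ norm accounting for the discrepancy between $W^\alpha_\ce$ and $W^\alpha_0$, and a diagonal argument in $(k,\ce)$. The only differences are organizational: you absorb the paper's separate correction $\overline{W}^\alpha_\ce$ (which fixes the non-affine part $\sum_{i\neq j}W^i_\ce$ inside each ball) into a single Taylor truncation of $v_k+g^j_\ce$, and you adjust the boundary datum from $-W^\alpha_0$ to $-W^\alpha_\ce$ by an explicit collar lifting $\psi_\ce$ rather than the paper's cutoff modification — both harmless.
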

\begin{proof}
By standard density arguments, there exists a sequence $\{v_k\}_{k\in\N}\subset C^\infty(\Omega)$ such that $v_k\to w$ strongly in $H^2(\Omega)$ as $k \to \infty$\,. Furthermore, we can assume that there exists a sequence $\{\delta_k\}_{k\in\N}$ with $\delta_k\to 0$ as $k\to \infty$\,, such that
 $v_k\equiv -W_0^\alpha$ in a $\delta_k$-neighborhood of $\partial \Omega$\,.
 
Let $\supp\alpha=\{x^1,\ldots, x^J\}$\,.
For every $j=1,\ldots,J$ and for every $k\in\N$\,, we set
$$
\hat v^j_{k}(x)\coloneqq v_{k}(x^j)+\langle\nabla v_{k}(x^j),x-x^j\rangle\qquad\textrm{for every }x\in\R^2\,;
$$
moreover, we consider a $C^2$ function $\gamma\colon [1,2]\to [0,1]$ with $\gamma(1)=0$\,, $\gamma(2)=1$\,, ${\gamma}'\ge 0$ in $(1,2)$\,, $\gamma'_+(1)=0=\gamma'_-(2)$ and $\gamma''_+(1)=0=\gamma''_-(2)$\,.
For every $0<\ce<\frac{1}{2}\min\{D,\delta_k\}$ (with $D$ defined in \eqref{distaminima}) we define the function
 $v_{k,\ce}\colon\Omega\to \R$ as
\begin{equation*}
v_{k,\ce}(x)\coloneqq
\begin{cases}
\hat v^j_{k}(x)&\textrm{if }x\in B_\ce(x^j)\\[2mm]
\displaystyle\Big(1-\gamma\Big(\frac{|x-x^j|}{\ce}\Big)\Big)\hat v^j_{k}(x)+\gamma\Big(\frac{|x-x^j|}{\ce}\Big)v_{k}(x)&\textrm{in }A_{\ce,2\ce}(x^j)\\[2mm]
v_{k}(x)&\textrm{if }x\in\Omega_{2\ce}(\alpha)\,.
\end{cases}
\end{equation*} 
Notice that, since $\ce<\frac{\delta_k}{2}$\,, we have that  $v_{k,\ce}$
coincide with $-W^\alpha_0$ in a $\frac{\delta_k}{2}$-neighborhood of $\partial\Omega$\,.
We claim that,
for every $k\in\N$, 
\begin{equation}\label{confin}
\|v_{k,\ce}-v_{k}\|_{H^2(\Omega)}=\sum_{j=1}^J\|v_{k,\ce}-v_{k}\|_{H^2(B_{2\ce}(x^j))}\to 0\qquad\textrm{as }\ce\to 0\,.
\end{equation}
To this end, we prove that for every $j=1,\ldots,J$
\begin{equation}\label{confin1}
\|v_{k,\ce}-v_{k}\|_{H^2(B_{2\ce}(x^j))}\le C\ce\|\nabla^2v_{k}\|_{L^\infty(\Omega;\R^{2\times 2})}\,,
\end{equation}
for some universal constant $C$ independent of $k$ and $\ce$\,.
Indeed, fix $j=1,\ldots,J$\,.
By the Taylor expansion formula with Lagrange remainder, 
we have that
\begin{eqnarray}\label{confin1a}
\|\hat v^j_{k}-v_{k}\|^2_{L^2(B_{2\ce}(x^j))}&=&\frac 1 4\int_{B_{2\ce}(x^j)}|\langle\nabla^2 v_{k}(\xi_x^j)(x-x^j), x-x^j\rangle|^2\,\ud x\\\nonumber
&\le& C\ce^6\|\nabla^2 v_{k}\|^2_{L^\infty(\Omega;\R^{2\times 2})}\,,\\ \label{confin1b}
\|\nabla\hat v^j_{k}-\nabla v_{k}\|^2_{L^2(B_{2\ce}(x^j);\R^{2})}&=&\int_{B_{2\ce}(x^j)}|\nabla v_{k}(x^j)-\nabla v_{k}(x)|^2\,\ud x\\ \nonumber
&\le& C\ce^4\|\nabla^2 v_{k}\|^2_{L^\infty(\Omega;\R^{2\times 2})}\,,\\  \label{confin1c}
\|\nabla^2\hat v^j_{k}-\nabla^2 v_{k}\|^2_{L^2(B_{2\ce}(x^j);\R^{2\times 2})}&=&\|\nabla^2 v_{k}\|^2_{L^2(B_{2\ce}(x^j);\R^{2\times 2})}\\ \nonumber
&\le& C\ce^2\|\nabla^2 v_{k}\|^2_{L^\infty(\Omega;\R^{2\times 2})}\,,
\end{eqnarray}
where in \eqref{confin1a} $\xi_x^j$ is a point in the segment joining $x^j$ and $x$\,.
Furthermore, since 
$$
\Big\|\nabla\gamma\Big(\frac{|\cdot|}{\ce}\Big)\Big\|_{L^\infty(A_{\ce,2\ce}(0);\R^2)}\le\frac{C}{\ce}\,,\qquad \Big\|\nabla^2\gamma\Big(\frac{|\cdot|}{\ce}\Big)\Big\|_{L^\infty(A_{\ce,2\ce}(0);\R^{2\times 2})}\le\frac{C}{\ce^2}\,,
$$
by \eqref{confin1a},  \eqref{confin1b}, and \eqref{confin1c}, we deduce that
\begin{equation*}
\begin{aligned}
\|v_{k,\ce}-v_{k}\|^2_{L^2(A_{\ce,2\ce}(x^j))}\le&\,\|\hat v^j_{k}-v_{k}\|^2_{L^2(A_{\ce,2\ce}(x^j))}\\
\le&\, C\ce^6\|\nabla^2 v_{k}\|^2_{L^\infty(\Omega;\R^{2\times 2})}\,,\\
\|\nabla v_{k,\ce}-\nabla v_{k}\|^2_{L^2(A_{\ce,2\ce}(x^j);\R^2)}\le&\,\frac{C}{\ce^2}\|\hat v^j_{k}-v_{k}\|^2_{L^2(B_{2\ce}(x^j))}\\
&\,+C\|\nabla\hat v^j_{k}-\nabla v_{k}\|^2_{L^2(B_{2\ce}(x^j);\R^2)}\\
\le&\, C\ce^4\|\nabla^2 v_{k}\|_{L^\infty(\Omega;\R^{2\times 2})}\,,\\\
\|\nabla^2 v_{k,\ce}-\nabla^2v_{k}\|^2_{L^2(A_{\ce,2\ce}(x^j);\R^{2\times 2})}\le&\,\frac{C}{\ce^4}\|\hat v^j_{k}-v_{k}\|^2_{L^2(B_{2\ce}(x^j))}\\
&\,+\frac{C}{\ce^2}\|\nabla\hat v^j_{k}-\nabla v_{k}\|^2_{L^2(B_{2\ce}(x^j);\R^2)}\\
&\,+C\|\nabla^2\hat v^j_{k}-\nabla^2 v_{k}\|^2_{L^2(B_{2\ce}(x^j);\R^{2\times 2})}\\ \nonumber
\le&\, C\ce^2\|\nabla^2v_{k}\|^2_{L^\infty(\Omega;\R^{2\times 2})}\,,
\end{aligned}
\end{equation*}
whence we get that
$$
\|v_{k,\ce}-v_{k}\|_{H^2(A_{\ce,2\ce}(x^j))}\le C\ce\|\nabla^2v_{k}\|_{L^\infty(\Omega;\R^{2\times 2})}\,;
$$
{this fact, together with \eqref{confin1a}, \eqref{confin1b} and \eqref{confin1c}, implies \eqref{confin1} and hence \eqref{confin}}.
Moreover, up to using a cut-off function, in view of Remark \ref{maybeuseful}, we can assume that $v_{k,\ce}\equiv -W_{\ce}^\alpha$ in an $\ce$-neighborhood of $\partial\Omega$\,, so that the boundary condition in the definition of $\widetilde{\Bb}_{\ce,\Omega}^\alpha$ in \eqref{tildeBdelta} is satisfied.

To recover the traction-free condition on each $\partial B_{\ce}(x^j)$, we notice that each function $W_{\ce}^j$ (defined in \eqref{20220223_1}) is affine in $\overline{B}_{\ce}(x^j)$, whereas it is smooth in $\bigcup_{i\neq j}B_{\ce}(x^i)$\,.
Therefore, for every $j=1,\ldots,J$ we define the function
 $\widehat W_\ce^{\neq j}\colon B_D(x^j)\to \R$ as the affine contribution of all of the $W_{\ce}^i$ for $i\neq j$, \emph{i.e.}, 
$$
\widehat W_\ce^{\neq j}(x)\coloneqq \sum_{i\neq j}\Big(W_\ce^i(x^j)+\langle\nabla W_\ce^i(x^j),x-x^j\rangle\Big)\,.
$$
Now, we define the function $\overline{W}^\alpha_{\ce}\colon\Omega\to \R$ as
\begin{equation*}
\overline{W}^\alpha_{\ce}(x)\coloneqq
\begin{cases}
\widehat W_\ce^{\neq j}(x)+W_\ce^j(x)-W_\ce^\alpha(x)&\textrm{if }x\in B_\ce(x^j) \\[2mm]
\displaystyle \Big(1-\gamma\Big(\frac{|x-x^j|}{\ce}\Big)\Big)\big(\widehat W^{\neq j}_{\ce}(x)+W_\ce^j(x)-W_\ce^\alpha(x)\big)&\textrm{if }x\in A_{\ce,2\ce}(x^j)\\[2mm]
0&\textrm{if }x\in\Omega_{2\ce}(\alpha)\,.
\end{cases}
\end{equation*}
By the very definition of $W_\ce^\alpha$ (see \eqref{20220223_1} again) it is easy to check that
\begin{equation*}
\|\overline{W}_\ce^\alpha\|_{H^2(\Omega)}\to 0\qquad\textrm{as }\ce\to 0\,.
\end{equation*}
For every $k$ and $\ce$ as above, we define $w_{k,\ce}\colon\Omega\to\R$ as
$w_{k,\ce}\coloneqq v_{k,\ce}+\overline{W}_\ce^\alpha$\,, and we notice that it belongs to $\widetilde\Bb_{\ce,\Omega}^\alpha$ by construction.
Therefore, by a standard diagonal argument, there exists a sequence $\{w_\ce\}_{\ce}$ with $w_{\ce}=w_{k(\ce),\ce}$ satisfying the desired properties.  
\end{proof}

\bibliographystyle{plain} 
\bibliography{refsPatrick.bib}

\end{document}